\newtheorem{theorem}{Theorem}
\newtheorem{proposition}[theorem]{Proposition}
\newtheorem{lemma}[theorem]{Lemma}
\newtheorem{corollary}[theorem]{Corollary}
\newtheorem{conjecture}[theorem]{Conjecture}
\newtheorem{definition}{Definition}
\newtheorem*{remark}{Remark}
\newtheorem*{claim}{Claim}
\def\ZZ{\mathbb{Z}}
\def\RR{\mathbb{R}}
\def\NN{\mathbb{N}}
\newcommand{\Sym}[1]{\mathbf S_{#1}}
\def\Id{\mathbf{1}}
\def\Aut{\textbf{Aut}}
\def\Mtt{\cal{M}_{\rm 2\times3}}
\def\Rtt{\mathbf{R_{2\times3}}}
\def\Sfive{\Sym{5}}
\def\Ssix{\Sym{6}}
\def\Smn{\Sym{mn}}
\def\epo{\mathfrak{E}}
\def\prob{\mathbf{p}}
\def\fd{\mathfrak{D}}
\def\fdM{\mathfrak{M}}
\def\btr{\blacktriangleright}
\begin{document}

\title{An entropic partial order on a parabolic quotient of $\Ssix$}

\author{Gary McConnell}

\affiliation{Imperial College London}

\email{g.mcconnell@imperial.ac.uk}

\date{\today}

\begin{abstract}
Let $m$ and $n$ be any integers with $n>m\geq2$. Using just the entropy function it is possible to define a partial order on the symmetric group~$S_{mn}$ modulo a subgroup isomorphic to $S_m\times S_n$. We explore this partial order in the case~$m=2$ and~$n=3$, where thanks to the outer automorphism the quotient space is actually isomorphic to a parabolic quotient of~$\Ssix$.
Furthermore we show that in this case it has a fairly simple algebraic description in terms of elements of the group ring.
\end{abstract}

\maketitle

\tableofcontents

\newpage

\subsubsection{Introduction and statement of the main theorem}

Let $\prob=(p_i)$ represent any probability vector in $\RR^6$.
This paper is concerned with a partial order $\epo$ among the~720 coordinatewise permutations of $\prob$, based on the Shannon entropy function~$H(x)=-x\log x,$
which is dependent only upon the ordering of the $p_i$ and not upon their values. It arose originally in the guise of a question in quantum information theory about \emph{classicality} versus \emph{quantumness}~\cite{me}; however the structure theory turns out to be quite general. Because its natural setting is joint quantum systems the definition requires that we stipulate `subsystems' of dimensions~2 and~3 and then take the entropy of the marginal probability vectors from $\prob$ with respect to these subsystems. This construction brings with it a natural equivalence class structure and so the partial order is in fact defined only upon~60 equivalence classes of these permutations, each of size~12. We summarise this as our main theorem, as follows. Recall that the \emph{density} of a partial order on a finite set of size~n is defined to be $r/\binom{n}{2}$, where~$r$ is the number of relations which appear in the partial order, and~$\binom{n}{m}$ denotes the binomial symbol.

\begin{theorem}\label{analalg}
Let $G=\Sym{6}$ be the symmetric group on six letters and let $J$ be any one of the six parabolic subgroups of~$G$ which are isomorphic to the dihedral group of order~12. There is a partial order~$\epo$ on the right coset space~$J\backslash G$ of density~$0.47$ whose analytical description may be given solely in terms of the Shannon entropy function~$H$. Moreover it has a concise independent algebraic description in terms of group ring elements.
\end{theorem}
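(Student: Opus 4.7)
The plan is to proceed in three stages: establishing the analytic partial order $\epo$ on cosets, reducing its verification to a finite combinatorial check, and finally extracting the algebraic description.

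First, I identify $\RR^6$ with $\Rtt$, converting the action of $G=\Ssix$ on $\prob$ by coordinate permutation into permutation of the six entries of a $2\times 3$ matrix. The subgroup $J_0\cong \Sym{2}\times\Sym{3}$ of row- and column-permutations preserves both marginal multisets of $\prob$, and hence the row- and column-marginal Shannon entropies $H_r(\prob)$ and $H_c(\prob)$. Fixing a generic $\prob$ with $p_1>\cdots>p_6>0$, I declare $\sigma\preceq\tau$ precisely when $H_r(\sigma\prob)\leq H_r(\tau\prob)$ and $H_c(\sigma\prob)\leq H_c(\tau\prob)$. Reflexivity and transitivity are immediate, and coincidence of both marginal multisets characterises membership in a common $J_0$-coset, so antisymmetry holds on cosets, yielding $|J_0\backslash G|=60$ equivalence classes. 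The six dihedral parabolic~$J$ referenced in the statement arise as the images of $J_0$ under the outer automorphism of $\Ssix$.

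Second, I invoke the fact, asserted in the introduction, that $\epo$ depends only on the total ordering of the $p_i$ and not on their numerical values. The underlying reason is that the binary entropy $h(r)=-r\log r-(1-r)\log(1-r)$ appearing in $H_r$ is symmetric about $r=1/2$, while $H_c$ is the Shannon entropy of three marginals summing to~$1$; after these symmetry reductions each entropy inequality reduces to comparisons between subset sums of fixed cardinality in $\prob$, which are purely ordinal. Granted this, the verification reduces to a finite symbolic computation: enumerate the $60$ coset representatives and all $\binom{60}{2}=1770$ pairs, and decide comparability. The density $0.47$ corresponds to approximately $0.47\cdot 1770\approx 832$ comparable pairs, a count I would obtain by direct enumeration.

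Finally, and this I expect to be the main obstacle, I search for an element $\fd\in\ZZ[G]$ supported on the double-coset decomposition $J\backslash G/J$, whose sign pattern under convolution with $\sigma^{-1}\tau$ detects $\sigma\preceq\tau$. Guided by the enumeration of step two, I would fit coefficients to the relations on a spanning set of double-coset representatives and then verify globally, exploiting the dihedral symmetry of $J$ together with the row-column symmetry of the marginal entropy functionals, that the algebraic description agrees with the entropic one on every pair. The difficulty is not the finite computation itself but exhibiting a structural identification---a conceptual reason why the algebraic and entropic relations coincide beyond the brute-force match.
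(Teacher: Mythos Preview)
Your definition of the relation is not the paper's, and the discrepancy is fatal to the argument. You declare $\sigma\preceq\tau$ when \emph{both} marginal entropies satisfy $H_r(\sigma\prob)\le H_r(\tau\prob)$ and $H_c(\sigma\prob)\le H_c(\tau\prob)$ for one generic $\prob$. The paper's relation $\rhd$ instead compares the \emph{sum} $H_r+H_c$ (equivalently the CMI), and requires the inequality to hold for \emph{every} ordered $\prob\in\fd_6$, not a single generic one. These are genuinely different orders. Requiring both marginal entropies to be separately ordered for all $\prob$ is essentially the paper's matrix majorisation relation $\succ$ of Definition~\ref{critta}, which by Proposition~\ref{majmcmaj} is strictly coarser than $\rhd$: it produces only about $423$ relations, density roughly $0.24$, not $0.47$.

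The deeper error is your claim that ``after symmetry reductions each entropy inequality reduces to comparisons between subset sums of fixed cardinality in $\prob$, which are purely ordinal.'' This is exactly the statement that $\rhd$ coincides with majorisation, and it is false. The paper's Theorem~\ref{cripes} isolates necessary and sufficient conditions for $\rhd$ across a single diagonal transposition and finds $90$ relations (the type~A non-majorisation and type~B cases) that hold for entropy but are \emph{not} consequences of any subset-sum inequality; their proofs require analytic properties of $H$ through the identric mean $\mu_H$ and Lemma~\ref{lollipop}. Five further ``sporadic'' relations (Theorem~\ref{proveit}) need still more delicate analysis. Without these $90{+}5$ relations the transitive closure falls far short of $830$. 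So the ``finite symbolic computation'' you propose cannot be carried out as described: the comparability of two cosets under $\rhd$ is not decidable from the order type of $\prob$ alone by subset-sum reasoning, and the paper's substance lies precisely in the analytic work you are trying to bypass. The algebraic description, incidentally, is also of a different shape: it is not a double-coset convolution kernel but a family of group-ring elements $\eta_\tau$, one per transposition, each encoding $17$ single-transposition covering relations whose transitive closure (plus the sporadics) recovers $\epo$.
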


The proof of this theorem, together with an in-depth analysis of the structure of~$\epo$, are essentially what constitute the remainder of the paper. We must mention here that our description of~$\epo$ is unfortunately incomplete: while we believe that there are~830 relations which constitute~$\epo$ there are nevertheless four of these relations, which we shall refer to throughout as~$\mathbf{C4}$, which we have been unable to prove or disprove analytically; although the numerical evidence for their validity is compelling. So our statements about the partial order must be read with the caveat that there is still a possibility that some or all of the~$\mathbf{C4}$ are not in fact valid relations. However the structure of the remaining~826 relations of the partial order is independent of these four.

Such a partial order may in fact be described for any function $f$ instead of $H$ provided that certain convexity conditions are met: essentially we obtain a kind of `pseudo-norm' based upon the function $f$ that we choose. A curious consequence is that we may describe a whole suite of functions apparently unconnected to entropy, whose partial orders nevertheless appear numerically to mimic $\epo$ exactly. At one level this is not very surprising, since the partial order is in some sense merely a discrete approximation to the curvature of the function concerned - hence there will be many different functions whose curvature is sufficiently similar on the appropriate region of space to give the same discrete approximation. But at another level this points to a deeper connection between certain of these functions and discrete entropies: perhaps there is an easier way to model entropy-related phenomena for low-dimensional joint systems than to attack the rather difficult entropy function itself.  The space of relatively simple functions which would appear to mimic the entropy function - in this albeit limited context - is incredibly varied. For example, the function~$f(x) = \cos(\frac{2\pi}{3}x)$ seems numerically to give exactly the same partial order as~$H(x)$, despite having a markedly different curvature function; the same is true of the function~$q(x)=(\alpha x)^3-(\alpha x)^2$ when~$\alpha=\frac{4}{9}$. Moreover any slight variation in the respective coefficients~$\frac{2\pi}{3}$ or~$\alpha$ will `break' the respective partial order. However these functions are not concave on the full interval~$(0,1)$ and so the techniques of this paper will not work on them. 

As we vary the underlying function $f$, another key question arises as to how the algebraic description needs to be modified in order to reflect the new analytical structure. Both the analytic and algebraic approaches are rich topics for further study.

The constructions here are not specific to the~6-dimensional case; however dimension~6 gives the first non-trivial partial order and sadly also the last easily-tractable one. Even for~$2\times4$ (which is the next interesting case), numerical studies indicate that the number of separate relations which need to be considered is of the order of~$10^5$, the~$3\times3$ case yields around~3 million, and for~$2\times5$ it is of the order of~20 million. Also only where the dimensions are~$2\times2$ and~$2\times3$ are we able to single out a definite permutation which is guaranteed to give the maximal classical mutual information~(CMI) no matter what the probability vector chosen~\cite{me}: in all other dimensions this grows into a larger and larger set of possibilities. However the constructions of this paper may be extended to any situation where we have joint systems of dimensions $m$ and $n$: for any sufficiently well-behaved function $f$ we obtain a binary relation between certain permutations of the probabilities of the joint system, yielding what may be viewed as a partial order upon (some quotient of) the symmetric group $\Smn$ itself. We shall always assume $2\leq m\leq n$, for if $m>n$ then the situation is identical just with the subsystems reversed; if $m=1$ then there is nothing to be said since every permutation will give the same result, as will be seen from the definitions below.

We conclude this introductory section with a word on how this partial order arose. Suppose that we have ordered the~$p_i$ so that~$p_1\geq p_2\geq p_3\geq p_4\geq p_5\geq p_6$. 
In~\cite{me} it was shown that the permutation
$$(p_1,p_4,p_5,p_6,p_3,p_2)$$
will always yield the maximal CMI out of all of the possible permutations given by~$\Ssix$. This built on work in~\cite{santerdav} and~\cite{santerdavPRL} which showed that the minimum CMI of all of these permutations was contained in a set of five possibilities, all of which do in fact occur in different examples. The results on the minima were achieved solely using considerations of majorisation among marginal probability vectors; however in order to prove maximality it was necessary to invoke a more refined entropic binary relation denoted~$\rhd$. In exploring this finer ordering we found that it did indeed give rise to a well-defined partial order which moreover had a neat description in terms of symmetric group elements. So the paper is the result of this exploration.

\subsubsection{Structure of the proof of the main theorem}

We now outline how the proof of theorem~\ref{analalg} will proceed. First of all however we need to decipher the connection with the parabolic subgroups $J$, since this barely appears elsewhere in the paper. The point is that because $\Sym{6}$ has a class of non-trivial outer automorphisms~\cite{rotman} we are able to study some phenomena via their image under any particular outer automorphism of our choosing: a trick which often makes things much clearer. Let $K$ be the dihedral group corresponding to row and column swaps which we shall define in section~\ref{qmpo}. As is easy to verify, for any $J$ as described in the theorem there exists at least one outer automorphism mapping $K$ onto $J$ and so any partial order which we may define upon $K\backslash G$ will also give an isomorphic partial order on $J\backslash G$, and vice-versa. So we define our partial order in its natural context on the coset space $K\backslash G$ and then merely translate the result into the more familiar language of parabolic subgroups in the statement of the theorem. Indeed there is no reason - other than the richness of structure which has been investigated for parabolic subgroups - for phrasing it in these terms. One could equally well describe the partial order on the quotient of~$G$ by \emph{any} dihedral subgroup of order~12, for there are two conjugacy classes of subgroups of $G$ which are dihedral of order~12 - namely the class containing~$K$ and the class containing the parabolics~$J$ - each of size~60, and they are mapped onto one another by the action of the outer automorphisms.

So the proof of theorem~\ref{analalg} will go as follows. Once we establish the basic definitions regarding entropy, classical mutual information, majorisation and the entropic binary relation~$\rhd$, we begin to examine each of them in the context where two permutations differ by right multiplication by just a single transposition: first because this is the simplest case; but secondly because it actually generates all but~5 out of~186 covering relations in the partial order. A general rule for comparing pairs of permutations differing by more than one transposition under the entropic binary relation~$\rhd$, moreover, seems to be very difficult: we are fortunate that only these five \emph{`sporadic'} relations exist which cannot be generated via some concatenation of single-transposition relations. We elaborate necessary and sufficient conditions for permutations separated by a single transposition both for majorisation and for the entropic binary relation~$\rhd$, noting the result from~\cite{me} that majorisation implies~$\rhd$ but not vice-versa. This gives a total of~165 relations arising from majorisation, and~90 relations arising solely from the binary entropy relation~$\rhd$: a grand total of~255 relations arising from single transpositions. The transitive closure of these~255 relations contains~818 relations in total.

Once this is proven we shall almost have completed our description of~$\epo$, for numerically it is easy to show that with the exception of the~12 relations which are generated when the \emph{sporadic~5} are included, any other possible pairings are precluded by counterexample. So the partial order must have between~818 and~830 relations. With the two proven in theorem~\ref{proveit} the transitive closure grows to~826 relations, leaving just the set~$\mathbf{C4}$ mentioned above. This completes the `analytic' description of~$\epo$.

It then remains to prove that~$\epo$ has a neat description in terms of the group ring $\ZZ[G]$. We give an iterative algorithm for constructing the entire web of~255 single-transposition relations referred to above starting from scratch, using simple rules which have no apparent connection to entropy. Of course we would not have `seen' this description had it not been for the analytic work which went before; however once we know what we are looking for, the entire complex of~255 relations is describable in very straightforward terms. The sporadics however must be added in to both descriptions: there seems to be no easy way of unifying their structures with the bigger picture.

\subsubsection{Acknowledgments}
First of all thank you to Terry Rudolph and the QOLS group at Imperial College for their hospitality. I would also like to thank Peter Cameron, Ian Grojnowski and David Jennings for many helpful conversations.

\newpage

\section{CMI, majorisation and the entropic binary relation $\rhd$}

\subsection{The classical mutual information attached to an $m\times n$ probability matrix}

Let $N$ be any positive integer and define the usual probability simplex to be
$$\Delta^N=\{(p_1,p_2,\ldots,p_{N+1})\in\RR^{N+1}\ :\ \sum_{i=1}^{N+1} p_i=1\hbox{ and }p_i\geq0\hbox{ for all }i\}.$$
Now consider the case where $N+1=mn$ is a composite number and 
let $\prob=(p_i)\in\Delta^{mn-1}$ be any probability vector: we view this as a set of joint probabilities for two systems of size $m$ and $n$. 
We reflect the split into subsystems by arranging the $p_k$ into an $m\times n$-matrix $P$ as follows:
\begin{equation}\label{rowsncols}
\bordermatrix{
&c_1 & c_2 & \ldots & c_n\cr
r_1&p_1 & p_2 & \ldots & p_n \cr
r_2&p_{n+1} & p_{n+2} & \ldots & p_{2n} \cr
\vdots&\vdots & \vdots & \ddots & \vdots \cr
r_m&p_{(m-1)n+1} & p_{(m-1)n+2} & \ldots & p_{mn} \cr
}=P.
\end{equation}
As depicted above we let the row sums (which are the marginal probabilities for the first subsystem) be denoted by $r_i=\sum_{j=1}^n p_{(i-1)n+j}$ for $i=1,\ldots,m$ and similarly for the column sums (which are the marginal probabilities for the second subsystem): $c_j=\sum_{i=1}^m p_{(i-1)n+j}$ for $j=1,\ldots,n$.
Then given any permutation $\sigma$ in the symmetric group $\Smn$ on $mn$ letters sending $p_i$ to $p_{\sigma(i)}$
we define a new $m\times n$-matrix $P^\sigma$ as follows:
\begin{equation*}
P^\sigma=\begin{pmatrix}
p_{\sigma(1)} & p_{\sigma(2)} & \ldots & p_{\sigma(n)} \cr
p_{\sigma(n+1)} & p_{\sigma(n+2)} & \ldots & p_{\sigma(2n)} \cr
\vdots & \vdots & \ddots & \vdots \cr
p_{\sigma((m-1)n+1)} & p_{\sigma((m-1)n+2)} & \ldots & p_{\sigma(mn)} \cr
\end{pmatrix},
\end{equation*}
defining the appropriate marginal probabilities in a similar fashion.

To define the classical mutual information~\cite[\S2.3]{cover} we take the sum of the entropies of the~$r_i$ and the~$c_j$ over all~$i,j$ and then subtract the sum of the individual entropies of the~$p_k$, for~$k=1,\ldots,mn$. 

\begin{definition}\label{cmile}
With notation as above, the classical mutual information $I(P)$ of the matrix $P$ is given by
\begin{equation}\label{CMIdef}
I(P) = \sum_{i=1}^m-r_i\log r_i + \sum_{j=1}^n-c_j\log c_j - \sum_{k=1}^{mn}-p_k\log p_k.
\end{equation}
We will often write $H(x)=-x\log x$ for $x\in [0,1]$ and so we may rewrite~(\ref{CMIdef}) as
\begin{equation*}
I(P) = \sum_{i=1}^m H(r_i) + \sum_{j=1}^n H(c_j) - \sum_{k=1}^{mn}H(p_k).
\end{equation*}
\end{definition}

\subsection{Majorisation between two elements of $\Smn$}\label{majdef}

For definitions and basic results connected with majorisation, see~\cite{bhatia} and~\cite{marshall}. We shall use the standard symbol~$\succ$ to denote majorisation between vectors.
For any~$m\times n$ probability matrix~$M$, let us denote by~$\mathbf{r}(M)\in\RR^m$ the vector of marginal probabilities represented by the sums of the rows of~$M$ and similarly by~$\mathbf{c}(M)\in\RR^n$ the vector of marginal probabilities created from the sums of the columns of~$M$. Throughout the paper we shall use the symbol $H$ interchangeably for the function of one variable as well as the function on probability vectors, where if $\mathbf{v}=(v_i)\in\RR^N$ is any such vector then
$$H(\mathbf{v})=\sum_{i=1}^N H(v_i)=-\sum_{i=1}^N v_i\log v_i.$$

\begin{lemma}\label{majoris}
Let $M_1,M_2$ be two probability matrices.
If $\mathbf{r}(M_1)\succ\mathbf{r}(M_2)$ and if $\mathbf{c}(M_1)\succ\mathbf{c}(M_2)$, then
$$I(M_1)\leq I(M_2).$$
\end{lemma}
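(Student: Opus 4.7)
The plan is to unpack $I(M_1)$ and $I(M_2)$ using the CMI formula from Definition~\ref{cmile} and to exploit the fact that, in the context of this paper, $M_1$ and $M_2$ are both obtained by applying elements of $\Smn$ to the same underlying probability vector $\prob$. Consequently, the multiset of entries $\{(M_1)_{ij}\}$ equals the multiset $\{(M_2)_{ij}\}$, so the third sum in~(\ref{CMIdef}),
\[
\sum_{k=1}^{mn}H\bigl((M_\ell)_k\bigr),
\]
takes the \emph{same} value for $\ell=1$ and $\ell=2$ and therefore cancels when we form the difference $I(M_2)-I(M_1)$. The problem thus reduces to proving
\[
H(\mathbf{r}(M_1))+H(\mathbf{c}(M_1))\;\leq\;H(\mathbf{r}(M_2))+H(\mathbf{c}(M_2)).
\]

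Next I would invoke the classical Hardy--Littlewood--P\'olya theorem (see e.g.~\cite{bhatia,marshall}) which tells us that for any concave function $f$ on $[0,1]$, the map $\mathbf{v}\mapsto \sum_i f(v_i)$ is Schur-concave; equivalently, $\mathbf{x}\succ\mathbf{y}$ implies $\sum_i f(x_i)\leq \sum_i f(y_i)$. Since $H(x)=-x\log x$ is concave on $[0,1]$, the assumption $\mathbf{r}(M_1)\succ\mathbf{r}(M_2)$ immediately yields $H(\mathbf{r}(M_1))\leq H(\mathbf{r}(M_2))$, and the assumption $\mathbf{c}(M_1)\succ\mathbf{c}(M_2)$ likewise gives $H(\mathbf{c}(M_1))\leq H(\mathbf{c}(M_2))$. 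Adding the two inequalities completes the proof.

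Honestly there is no real obstacle here beyond bookkeeping: the content of the lemma is simply that entropy is Schur-concave, together with the observation that the $\sum_k H(p_k)$ term is invariant under permutation of the joint probabilities. The only subtlety worth flagging in writing it up is that the hypothesis ``$M_1,M_2$ are two probability matrices'' is to be understood in the setting of the preceding subsection, namely that both matrices arise from a common vector $\prob\in\Delta^{mn-1}$ via elements of $\Smn$; without this the cancellation of the $H(p_k)$ terms (and hence the lemma) fails. It is probably worth stating this explicitly at the start of the proof to avoid confusion.
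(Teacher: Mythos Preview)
Your proof is correct and follows exactly the same route as the paper's: the paper's proof consists of a single sentence citing Schur-concavity of $H$ (referring to~\cite{santerdav} and~\cite[\S II.3]{bhatia}), which is precisely the mechanism you spell out. Your added remark that the lemma tacitly assumes $M_1$ and $M_2$ share the same multiset of entries is well taken---without it the $\sum_k H(p_k)$ terms do not cancel and the inequality can fail---and the paper indeed uses the lemma only in that setting (cf.\ Definitions~\ref{critta} and~\ref{crittasymbols}).
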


\begin{proof}
See~\cite{santerdav}: it follows from the fact that $H$ is a Schur-concave function~\cite[\S II.3]{bhatia}.
\end{proof}

It should be pointed out that the converse is definitely NOT true: indeed it is this very failure which gives substance to the definition of the entropic binary relation $\rhd$.

\begin{definition}\label{critta}
If the hypotheses of Lemma \ref{majoris} hold then we write
$$M_1\succ M_2$$
and we shall say that $M_1$ majorises $M_2$: \bf but this matrix terminology is not standard.
\end{definition}

Note that definition~\ref{critta} has nothing intrinsically to do with entropy: it is the fact that entropy is a Schur-concave function which enables us to link it to majorisation~\cite{marshall}.
By the symmetry of the entropy function $H$ upon vectors, the relation of majorisation between matrices which we have just defined is invariant under row swaps and column swaps; moreover if $m=n$ then it is also invariant under transposition. 

From now on we shall use~$\prob$ to denote a probability vector of length~$mn$ (where~$m,n$ will be clear from the context) \emph{written in non-increasing order} and~$P$ to denote the corresponding~$m\times n$ matrix derived from~$\prob$ as above by successively writing its entries along the rows. Similarly~$\prob^\sigma,P^\sigma$ will denote the respective images under an element~$\sigma\in\Smn$. Notice that our $\prob$ are thereby chosen from a much smaller convex set than $\Delta^{mn-1}$, namely from the analogue of the `positive orthant' of a vector space:
\begin{equation}
\fd_{mn} = \{\ \prob\in\Delta^{mn-1}\ :\ p_1\geq p_2\geq\ldots\geq p_{mn}\ \},
\end{equation}
which is the topological closure of a fundamental domain for the action of~$\Smn$ upon~$\Delta^{mn-1}$. Henceforth all of the probability vectors with which we shall work will be assumed to be chosen from this set~$\fd_{mn}$; the corresponding set of matrices (constructed from each $\prob\in\fd_{mn}$ as above and therefore also with entries in non-increasing order as we go along successive rows) will be denoted $\fdM_{mn}$.

\begin{definition}\label{crittasymbols}
Let $\sigma,\sigma'\in\Smn$. If $\mathbf{r}(P^\sigma)\succ\mathbf{r}(P^{\sigma'})$ and if $\mathbf{c}(P^\sigma)\succ\mathbf{c}(P^{\sigma'})$ for all $P\in\fdM_{mn}$
then we write
$$\sigma\succ\sigma'$$
and we shall say that $\sigma$ majorises $\sigma'$: \bf but again this terminology is not standard.\it
\end{definition}

We are now ready to define a finer relation than the one which majorisation gives upon permutations of a fixed probability vector. This relation is the key to all of the results in this paper.

\subsection{Definition of the entropic binary relation $\rhd$ between two elements of $\Smn$}

If we consider the class of $(mn)!$ matrices formed by permuting the entries in the matrix $P$ in~(\ref{rowsncols}) under the full symmetric group $\Smn$ and then look at the CMI of each of the resulting matrices, there is a rigid \it a priori \rm partial order which holds between them, and which does not vary as $P$ moves over the whole of $\fdM_{mn}$. That is to say, \bf it does not depend on the sizes of the $\{p_i\}$ but only upon their ordering\rm. In low dimensions, much of the partial order can be explained by majorisation considerations. However there is a substantial set of relations which depends on a much finer graining than majorisation gives. In dimension~6 this fine-graining will become our entropic partial order~$\epo$. 

We denote the individual relational operator by~$\rhd$ and define it as follows. 

\begin{definition}\label{bump}
Given permutations $\sigma,\sigma'\in\mathbf{S}_{mn}$ we say that 
$$\sigma\ \rhd\ \sigma'$$
if it can be shown that $I(P^{\sigma'})-I(P^\sigma)$ is non-negative for all $P\in\fdM_{mn}$.
That is to say, given an ordered matrix $P\in\fdM_{mn}$, the relation $I(P^\sigma)\leq I(P^{\sigma'})$ holds \bf irrespective of the relative sizes of the entries\it.
This is the same as saying that
$$H(\mathbf{r}({P^\sigma}))+H(\mathbf{c}({P^\sigma}))\ \leq\ H(\mathbf{r}({P^{\sigma'}}))+H(\mathbf{c}({P^{\sigma'}}))$$
for all $P\in\fdM_{mn}$.

In order to keep the notation consistent with that of majorisation, we have adopted the convention that $\sigma\rhd\sigma'$ corresponds to $I(P^\sigma) \leq I(P^{\sigma'})$ for all $P\in\fdM_{mn}$.
\end{definition}

\begin{remark}
A key observation at this stage is that the partial order is not really connected with the notion of classical mutual information~(CMI) so much as it is with entropy itself, for the term which is the sum of the entropies of the individual joint probabilities is common to all permutations of a given fixed matrix~$P$, and so as we pointed out in definition~\ref{bump} the ordering depends only upon \emph{the relative sizes of the sums of the entropies of the marginal probability vectors}. Indeed, nothing meaningful may be said within this framework about any relation between the~CMI of matrices whose (sets of) entries are distinct: the ordering is effectively concerned solely with permutations.
\end{remark}

Now majorisation implies $\rhd$, but not vice-versa: we have the following result which was proven in~\cite{me}. The notation $(\alpha,\beta)$ for the transposition will be clarified in the next section.
\begin{proposition}\label{majmcmaj}
Let $\sigma\in\Smn$ and let $\tau=(\alpha,\beta)\in\Smn$ be the transposition swapping elements $\alpha$ and $\beta$. Then
\begin{equation*}
\left(\sigma\succ\sigma\tau\right)  \Rightarrow  \left(\sigma\rhd\sigma\tau\right).
\end{equation*}
Furthermore if $\alpha$ and $\beta$ belong to the same row or column of the corresponding $m\times n$ matrix, then the two notions are the same.\qed
\end{proposition}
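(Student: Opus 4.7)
The plan is to dispatch the two clauses separately. For the implication $\sigma\succ\sigma\tau \Rightarrow \sigma\rhd\sigma\tau$, observe that $P^\sigma$ and $P^{\sigma\tau}$ have the same multiset of entries, so the joint-entropy term $\sum_k H(p_k)$ appearing in~(\ref{CMIdef}) is identical for the two matrices. Consequently
$$I(P^{\sigma\tau}) - I(P^\sigma) = \bigl(H(\mathbf{r}(P^{\sigma\tau})) - H(\mathbf{r}(P^\sigma))\bigr) + \bigl(H(\mathbf{c}(P^{\sigma\tau})) - H(\mathbf{c}(P^\sigma))\bigr),$$
and Lemma~\ref{majoris}, applied pointwise at each $P \in \fdM_{mn}$, yields the required non-negativity.

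For the equivalence in the same-row case (the same-column case being symmetric, as is the notion of majorisation between matrices itself), I would work pointwise in $P$. After multiplication by $\sigma$ the transposition $\tau$ interchanges two cells of $P^\sigma$ in a common row; call them row $i$, columns $j$ and $k$, and let $a,b$ denote the two swapped entries while $s_j,s_k$ are the sums of the remaining entries in those two columns of $P^\sigma$. Then $\mathbf{r}(P^\sigma) = \mathbf{r}(P^{\sigma\tau})$, and the only altered column sums go from $(a+s_j,\,b+s_k)$ to $(b+s_j,\,a+s_k)$. Because these two pairs share the same total, majorisation between them reduces to comparing absolute differences; a short calculation shows that $\mathbf{c}(P^\sigma)\succ\mathbf{c}(P^{\sigma\tau})$ is equivalent to the scalar inequality $(a-b)(s_j-s_k) \geq 0$. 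On the $\rhd$ side, cancellation of the row-sum entropies and of the unaffected columns collapses Definition~\ref{bump} to
$$H(a+s_j) + H(b+s_k) \;\leq\; H(b+s_j) + H(a+s_k).$$

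To see that this last inequality is also equivalent to $(a-b)(s_j-s_k) \geq 0$, I would rewrite it as $g(a) \leq g(b)$ for the auxiliary function $g(x) := H(x+s_j) - H(x+s_k)$, and use strict concavity of $H$ on $(0,1)$: the derivative $g'(x) = H'(x+s_j) - H'(x+s_k)$ has sign opposite to that of $s_j-s_k$, so $g$ is monotone in a controlled direction, and the three cases $s_j>s_k$, $s_j<s_k$, $s_j=s_k$ each deliver exactly the same scalar criterion. The main delicate point I expect is making sure that these pointwise equivalences lift cleanly to the "for all $P\in\fdM_{mn}$" quantifier built into Definitions~\ref{critta} and~\ref{crittasymbols}; but since both reductions land on the identical algebraic condition in $a,b,s_j,s_k$, the quantified versions coincide too. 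Degenerate boundary configurations (ties $a=b$ or vanishing entries) produce equality on both sides simultaneously, so they cause no obstruction.
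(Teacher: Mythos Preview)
Your argument is correct. The first clause is exactly Lemma~\ref{majoris} applied pointwise (and uses nothing about $\tau$ being a transposition). For the second clause your reduction is sound: when the swap is confined to a single row, the row-sum vectors coincide, and the two column-sum vectors differ in exactly two coordinates with equal pair-sum; a T-transform argument shows that majorisation of the full $n$-vectors is then equivalent to $|(a+s_j)-(b+s_k)|\geq|(b+s_j)-(a+s_k)|$, i.e.\ to $(a-b)(s_j-s_k)\geq0$. Your monotonicity analysis of $g(x)=H(x+s_j)-H(x+s_k)$ via strict concavity of $H$ yields the identical scalar condition on the entropy side, and since both reductions are pointwise-in-$P$ equivalences, the universally quantified statements in Definitions~\ref{crittasymbols} and~\ref{bump} coincide as well.

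As for comparison: the paper does not actually supply a proof of Proposition~\ref{majmcmaj} --- it simply cites~\cite{me} and closes with \qed\ --- so there is no in-paper argument to weigh yours against. Your proof is the natural elementary one and would serve perfectly well in place of the citation.
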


We now explore the relations which arise from single transpositions.

\subsection{The entropic binary relation $\rhd$ for a single transposition}

In order to see what the entropic binary relation is in the case which will most interest us - that of a single transposition - we once again consider a general $m\times n$ probability matrix $P=(p_i)\in\fdM_{mn}$ as depicted in~\ref{rowsncols}. 
Let $\sigma$ be some element of $G$, so our starting matrix will be $P^\sigma$.
Let $\tau$ be any transposition acting on $P^\sigma$, interchanging two elements which we shall refer to as $\alpha$ and $\beta$ (by a slight abuse of notation, since the positions and the values will be referred to by the same symbols). The following diagram illustrates this action of $\tau$ on $P^\sigma$: we write $P^{\tau\sigma}=(P^\sigma)^\tau$ for the image of $P^\sigma$ under $\tau$ since we always write abstract group actions on the \emph{left}; but note that when it comes to the comparison we are trying to effect between group elements then since $\tau$ actually multiplies $\sigma$ on the \emph{right}, we will be comparing $\sigma$ with $\sigma\tau$ as required.

\begin{equation}
\label{matricks}
\bordermatrix{
&&&&c_\beta&&c_\alpha&\cr
&p_{\sigma(1)} & p_{\sigma(2)} & \ldots & \ldots & \ldots & \ldots & p_{\sigma(n)} \cr
&p_{\sigma(n+1)} & p_{\sigma(n+2)} & \ldots & \ldots & \ldots & \ldots & p_{\sigma(2n)} \cr
&\vdots & \vdots & \vdots & \vdots & \vdots & \vdots & \vdots \cr
r_\alpha&\ldots & \ldots & \ldots & \ldots & \ldots & \alpha & \ldots \cr
&\vdots & \vdots & \vdots & \vdots & \vdots & \vdots & \vdots \cr
r_\beta&\ldots & \ldots & \ldots & \beta  & \ldots & \ldots & \ldots \cr
&\vdots & \vdots & \vdots & \vdots & \vdots & \vdots & \vdots \cr
&p_{\sigma((m-1)n+1)} & p_{\sigma((m-1)n+2)} & \ldots & \ldots & \ldots & \ldots & p_{\sigma(mn)}\cr
}=P^\sigma
\end{equation}
and under the action of $\tau$ this is mapped to:
\begin{equation}
\label{matrickstau}
\bordermatrix{
&&&&c_\beta^\tau&&c_\alpha^\tau&\cr
&p_{\sigma(1)} & p_{\sigma(2)} & \ldots & \ldots & \ldots & \ldots & p_{\sigma(n)} \cr
&p_{\sigma(n+1)} & p_{\sigma(n+2)} & \ldots & \ldots & \ldots & \ldots & p_{\sigma(2n)} \cr
&\vdots & \vdots & \vdots & \vdots & \vdots & \vdots & \vdots \cr
r_\alpha^\tau&\ldots & \ldots & \ldots & \ldots & \ldots & \beta  & \ldots \cr
&\vdots & \vdots & \vdots & \vdots & \vdots & \vdots & \vdots \cr
r_\beta^\tau&\ldots & \ldots & \ldots & \alpha & \ldots & \ldots & \ldots \cr
&\vdots & \vdots & \vdots & \vdots & \vdots & \vdots & \vdots \cr
&p_{\sigma((m-1)n+1)} & p_{\sigma((m-1)n+2)} & \ldots & \ldots & \ldots & \ldots & p_{\sigma(mn)}\cr
}=P^{\tau\sigma}.
\end{equation}

Without loss of generality we may stipulate that \bf as matrix entries \rm $\alpha>\beta$ (if they are equal there is nothing to be done). We wish to compare $I(P^\sigma)$ with $I(P^{\tau\sigma})$. Note firstly that by the definition of CMI, the difference $I(P^{\tau\sigma})-I(P^\sigma)$ depends only on the rows and columns containing $\alpha,\beta$: all of the rest of the terms vanish as they are not affected by the action of $\tau$. We denote by $r_\alpha$ (respectively $r_\beta$) the sum of the entries in the row of $P^\sigma$ which contains $\alpha$ (resp. $\beta$), and by $c_\alpha$ (respectively, $c_\beta$) the sum of the entries in the column of $P^\sigma$ which contains $\alpha$ (resp. $\beta$). Similarly, we denote by $r_\alpha^\tau,r_\beta^\tau,c_\alpha^\tau,c_\beta^\tau$ the image of these quantities under the action of $\tau$. See the diagrams~(\ref{matricks}),~(\ref{matrickstau}) above.

NB: $r_\alpha^\tau,c_\alpha^\tau$ (respectively, $r_\beta^\tau,c_\beta^\tau$) no longer contain $\alpha$ (respectively~$\beta$), but rather~$\beta$ (respectively~$\alpha$).

So the quantity we are interested in becomes
\begin{eqnarray}
I(P^{\tau\sigma})-I(P^\sigma) & = & 
H(\mathbf{r}(P^{\tau\sigma}))+H(\mathbf{c}(P^{\tau\sigma}))-H(\mathbf{r}(P^\sigma))-H(\mathbf{c}(P^\sigma)) \nonumber \\
& = & H(r_\alpha^\tau)-H(r_\alpha)+H(r_\beta^\tau)-H(r_\beta)+H(c_\alpha^\tau)-H(c_\alpha)+H(c_\beta^\tau)-H(c_\beta),\label{willit}
\end{eqnarray}
with the proviso that if $\alpha$ and $\beta$ happen to be in the same row (respectively column) then the $r_\ast^\ast$ (respectively, $c_\ast^\ast$) terms vanish.
The terms in~(\ref{willit}) are grouped in pairs of the form $\pm(H(x+(\alpha-\beta))-H(x))$, which means we may write it in a more suggestive form:
\begin{equation}\label{slump}
I(P^{\tau\sigma})-I(P^\sigma) = (\alpha-\beta)\left(-\frac{H(r_\alpha)-H(r_\alpha^\tau)}{\alpha-\beta}+\frac{H(r_\beta^\tau)-H(r_\beta)}{\alpha-\beta} -\frac{H(c_\alpha)-H(c_\alpha^\tau)}{\alpha-\beta}+\frac{H(c_\beta^\tau)-H(c_\beta)}{\alpha-\beta}\right).
\end{equation}
To take advantage of the link with calculus, we introduce Lagrangian means~\cite[VI \S2.2]{bullen}.

\begin{definition}
Let $\varphi$ be a continuously differentiable and strictly convex or strictly concave function defined on a real interval $J$, with first derivative $\varphi'$. Define the {\bf Lagrangian mean $\mu_\varphi$ associated with $\varphi$} \it to be:
\begin{equation}\label{lagrean}
\mu_\varphi(x,y)  = \begin{cases}  {\varphi'}^{-1}\left(\frac{\varphi(y)-\varphi(x)}{y-x}\right)&\mbox{if }y\neq x \\
                                    x &\mbox{if }y=x                 \end{cases}
\end{equation}
for any $x,y\in J$, where ${\varphi'}^{-1}$ denotes the unique inverse of $\varphi'$.
\end{definition}
In other words, $\mu_\varphi$ is the function which arises from the Lagrangian mean value theorem in the process of going from the points $(x,\varphi(x))$ and $(y,\varphi(y))$ subtending a secant on the curve of $\varphi$, to the unique point in $[x,y]$ where the slope of the tangent to the curve $\varphi$ is equal to that of the secant. See figure~\ref{secant}. Note that the hypothesis about strict convexity/concavity is necessary in order to ensure the uniqueness of the inverse of the derivative.

\begin{figure}[h!btp]
\begin{center}
\mbox{
\subfigure{\includegraphics[width=3in,height=3in,keepaspectratio]{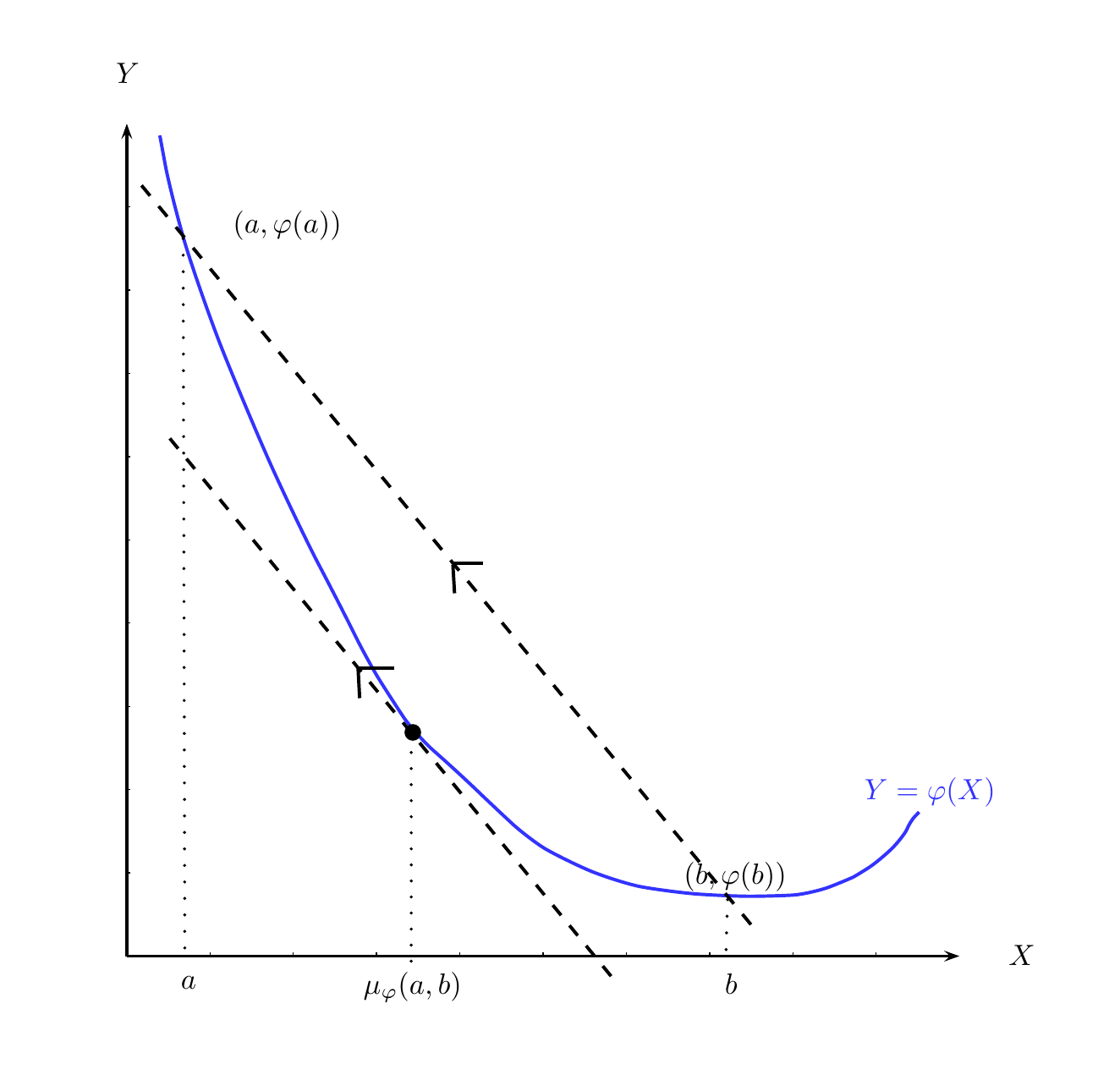}}
}
\caption{Definition of $\mu_\varphi$}\label{secant}
\end{center}
\end{figure}

If we focus on the case where $\varphi=H$ which is continuously differentiable and strictly concave on $J=[0,1]$ we may rewrite (\ref{slump}) as:
\begin{eqnarray*}
I(P^{\tau\sigma})-I(P^\sigma) & = & (\alpha-\beta)\left(-H'(\mu_H(r_\alpha^\tau,r_\alpha))+H'(\mu_H(r_\beta,r_\beta^\tau))
-H'(\mu_H(c_\alpha^\tau,c_\alpha))+H'(\mu_H(c_\beta,c_\beta^\tau))\right);
\end{eqnarray*}
indeed $\varphi'(x) = H'(x) = -(1+\log(x))$ and so this becomes:
\begin{eqnarray*}
I(P^{\tau\sigma})-I(P^\sigma) & = & (\alpha-\beta)\log \frac{\mu_H(r_\alpha^\tau,r_\alpha)\mu_H(c_\alpha^\tau,c_\alpha)}{\mu_H(r_\beta,r_\beta^\tau)\mu_H(c_\beta,c_\beta^\tau)}.
\end{eqnarray*}
Since $(\alpha-\beta)>0$, in order to determine which of the matrices gives higher CMI we only need consider the relative sizes of the numerator and denominator of the argument of the logarithm. So it is enough to study the quantity
\begin{eqnarray}
\mu_H(r_\alpha^\tau,r_\alpha)\mu_H(c_\alpha^\tau,c_\alpha) - \mu_H(r_\beta,r_\beta^\tau)\mu_H(c_\beta,c_\beta^\tau),
\label{wing}
\end{eqnarray}
as we did in~\cite{me} and as we do for the entropic binary relation below.

We are now in a position to re-state what is meant by $\rhd$ for this special case of a transposition.

\begin{lemma}
With notation as above, $\sigma\rhd \sigma\tau$ if and only if it can be shown that the quantity in (\ref{wing}) is non-negative for all $P\in\fdM_{mn}$.\qed
\end{lemma}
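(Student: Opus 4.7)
The lemma is essentially a direct restatement of the chain of identities ending at
$$I(P^{\tau\sigma})-I(P^\sigma) \;=\; (\alpha-\beta)\log \frac{\mu_H(r_\alpha^\tau,r_\alpha)\mu_H(c_\alpha^\tau,c_\alpha)}{\mu_H(r_\beta,r_\beta^\tau)\mu_H(c_\beta,c_\beta^\tau)},$$
so the plan is simply to follow signs through this identity. Recall that by definition~\ref{bump}, combined with the notational convention that $P^{\tau\sigma}$ is the matrix corresponding to the group element $\sigma\tau$, the assertion $\sigma\rhd\sigma\tau$ is precisely $I(P^{\tau\sigma})-I(P^\sigma)\geq 0$ for every $P\in\fdM_{mn}$.

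First I would dispose of the trivial case $\alpha=\beta$, in which $\tau$ fixes the matrix entries and both sides of the claimed equivalence hold automatically. Under the remaining case $\alpha>\beta$ the scalar $(\alpha-\beta)$ is strictly positive, so the sign of the CMI-difference on the left is governed entirely by the sign of the displayed logarithm. Each of the (at most) four Lagrangian-mean factors is strictly positive, since a Lagrangian mean of two values in $[0,1]$ at least one of which is positive lies in $(0,1]$; hence for positive $x,y$ the inequality $\log(x/y)\geq 0$ holds if and only if $x-y\geq 0$, and consequently the sign of the logarithm agrees with that of~(\ref{wing}).

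Putting these observations together, $I(P^{\tau\sigma})\geq I(P^\sigma)$ holds if and only if the quantity~(\ref{wing}) is non-negative, pointwise in $P$. Quantifying over all $P\in\fdM_{mn}$ yields the lemma. The one subtlety to keep in mind is the degenerate configuration in which $\alpha$ and $\beta$ share a row or column: as already noted after~(\ref{willit}), some $H$-differences then vanish identically and the corresponding $\mu_H$-factors collapse to their common argument, but the ratio simply shortens and the same sign chase carries through unchanged. I do not anticipate any genuine obstacle; all of the non-trivial analytic content was already extracted in the Lagrangian-mean rewriting preceding the statement, and what remains is a purely formal manipulation.
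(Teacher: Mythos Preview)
Your proposal is correct and follows exactly the approach the paper intends: the lemma is marked with a bare \qed\ because it is immediate from the displayed identity for $I(P^{\tau\sigma})-I(P^\sigma)$ together with $\alpha>\beta$ and the positivity of the identric-mean factors. Your explicit handling of the trivial case $\alpha=\beta$ and of the same-row/same-column degeneracy is a little more careful than the paper bothers to be, but the underlying argument is identical.
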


For convenience later on we state the following sufficient condition for $\rhd$ which is proven in~\cite{me}. Consider the four terms which constitute the first arguments of the function $\mu_H$ in (\ref{wing}), namely
\begin{equation}\label{titanic}
r_\alpha^\tau,c_\alpha^\tau,r_\beta,c_\beta.
\end{equation}
Observe that there are no \it a priori \rm relationships between the sizes of these quantities.
Let us consider the possible orderings of the four terms based upon what we know of the ordering of the matrix elements of $P$. In principle there are $24$ such possibilities; however in certain instances of small dimension such as our $2\times 3$ case, most of these may be eliminated and we are left with only a few orderings.

\begin{proposition}\label{titrate}
Suppose that the minimum element in (\ref{titanic}) is either $r_\beta$ or $c_\beta$. In addition suppose that we can verify that
$r_\beta+c_\beta \leq r_\alpha^\tau+c_\alpha^\tau$ holds for any $P\in\fdM_{mn}$.
Then $\sigma\rhd \sigma\tau$.

Conversely, suppose that the minimum element in (\ref{titanic}) is either $r_\alpha^\tau$ or $c_\alpha^\tau$ and in addition suppose that we can verify that
$r_\beta+c_\beta \geq r_\alpha^\tau+c_\alpha^\tau$ holds for any $P\in\fdM_{mn}$.
Then $\sigma\tau\rhd\sigma$.\qed
\end{proposition}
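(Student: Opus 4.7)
The plan is to extract everything from the explicit formula that precedes the proposition statement. From the calculation leading to expression~(\ref{wing}) we have $\sigma \rhd \sigma\tau$ iff
\[
\mu_H(r_\alpha^\tau, r_\alpha)\,\mu_H(c_\alpha^\tau, c_\alpha) \;\geq\; \mu_H(r_\beta, r_\beta^\tau)\,\mu_H(c_\beta, c_\beta^\tau)
\]
for every $P \in \fdM_{mn}$. The key observation is that each of the four Lagrangian-mean arguments is a pair of the form $(x, x+h)$ with the \emph{same} spread $h := \alpha-\beta > 0$, since $r_\alpha = r_\alpha^\tau + h$, $r_\beta^\tau = r_\beta + h$, $c_\alpha = c_\alpha^\tau + h$ and $c_\beta^\tau = c_\beta + h$. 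Accordingly I will work with the single-variable function $M(x) := \mu_H(x, x+h)$, reducing the target inequality to $M(r_\alpha^\tau)\,M(c_\alpha^\tau) \geq M(r_\beta)\,M(c_\beta)$.

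The argument then rests on two properties of $M$ that are inherited from the strict concavity of $H$ on $(0,1)$. Monotonicity: $M$ is strictly increasing, because the secant slope $(H(x+h)-H(x))/h$ is strictly decreasing in $x$ by concavity of $H$, while $(H')^{-1}$ is itself strictly decreasing. Log-concavity: a short computation using $H'(y) = -(1+\log y)$ and $(H')^{-1}(z) = e^{-1-z}$ gives
\[
(\log M)'(x) \;=\; \frac{1}{h}\log\frac{x+h}{x},
\]
which is manifestly decreasing in $x$, so $\log M$ is strictly concave.

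Now suppose the minimum of $\{r_\alpha^\tau, c_\alpha^\tau, r_\beta, c_\beta\}$ is $r_\beta$ (the case $c_\beta$ is symmetric). Then $r_\beta \leq r_\alpha^\tau$, and by monotonicity $M(r_\beta) \leq M(r_\alpha^\tau)$. If additionally $c_\beta \leq c_\alpha^\tau$ the product inequality is immediate. Otherwise set $u = r_\beta$, $v = r_\alpha^\tau$, $u' = c_\alpha^\tau$, $v' = c_\beta$; the ``min'' hypothesis gives $u \leq u'$, while the sum hypothesis rearranges to $v - u \geq v' - u' > 0$. Writing the $\log M$-increments as integrals of $(\log M)'$ and shifting variables via $s = t-u$ and $s = t-u'$ respectively, log-concavity yields $(\log M)'(u+s) \geq (\log M)'(u'+s)$ pointwise, and the interval $[0, v-u]$ contains $[0, v'-u']$; together these give
\[
\log M(v) - \log M(u) \;\geq\; \log M(v') - \log M(u'),
\]
which is precisely the desired product inequality after exponentiation.

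The converse statement follows by relabelling: if instead the minimum lies in $\{r_\alpha^\tau, c_\alpha^\tau\}$ and the reverse sum inequality holds, the same chain of reasoning with the roles of the $\alpha^\tau$- and $\beta$-terms exchanged forces~(\ref{wing}) to be non-positive, giving $\sigma\tau \rhd \sigma$. The main obstacle, in my view, is the log-concavity step: it is what turns a merely one-sided monotonicity (which handles only the easy sub-case where all four direct comparisons go the same way) into a genuine proof that can absorb the possibility of one column or row comparison going against us, provided the combined sum inequality compensates. Everything else is bookkeeping around the formula derived from the Lagrangian mean value theorem.
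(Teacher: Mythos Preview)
Your argument is correct. The paper itself does not prove this proposition here (it cites \cite{me} and marks the statement with \qed), so there is no in-text proof to compare against directly; however, the key ingredient you isolate---strict log-concavity of $x\mapsto\mu_H(x,x+h)$---is exactly what the paper later computes explicitly in the proof of Theorem~\ref{proveit} via $\frac{\partial^2}{\partial x^2}\log\mu_H(x,x+t)=-\frac{1}{tx+x^2}$, and monotonicity is Lemma~\ref{lollipop}(i). Your integral comparison, exploiting that $(\log M)'$ is positive and decreasing together with the two hypotheses $u\le u'$ and $v-u\ge v'-u'$, is a clean way to package the conclusion and is entirely in the spirit of the paper's toolkit.
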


\subsection{Properties of the \emph{identric mean} $\mu_H$}

We now prove some facts specifically about $\mu_H$ which will give us an insight into the sign of the quantity in~(\ref{wing}).

\begin{lemma}\label{lollipop}
Fix $t\in(0,1)$. For $x\in(0,1-t)$:
\vspace{2 mm}

(i) $\mu_H(x,x+t)>0$ and is strictly monotonically increasing in $x$;
\vspace{2 mm}

(ii) $\mu_H(x,x+t)$ is strictly concave in $x$;
\vspace{2 mm}

(iii) $\frac{1}{e} < \frac{1}{t}(\mu_H(x,x+t)-x) < \frac{1}{2}$;
\vspace{2 mm}

(iv) $\frac{\partial}{\partial x}\left(\mu_H(x,x+t)\right)$ is strictly monotonically increasing in $t$ for fixed $x$.
\vspace{4 mm}

Let $\delta\in(0,1-t-x)$.
\vspace{2 mm}

(v) $\frac{\mu_H(x+\delta,x+\delta+t)}{\mu_H(x,x+t)}$ is monotonic decreasing in $t$ for fixed $x$;
\vspace{2 mm}

(vi) $\frac{\mu_H(x+\delta,x+\delta+t)}{\mu_H(x,x+t)}$ is monotonic decreasing in $x$ for fixed $t$.
\vspace{4 mm}

Now let $0<p<q<r<s$, with $t$ as above.
\vspace{2 mm}

(vii) Suppose that $\frac{\mu_H(q,q+t)\mu_H(r,r+t)}{\mu_H(p,p+t)\mu_H(s,s+t)}>1.$  Then $qr>ps$.

\end{lemma}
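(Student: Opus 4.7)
The plan is to reduce all seven parts to the integral representation
$$\log \mu_H(x, x+t) = \frac{1}{t}\int_x^{x+t}\log z\, dz,$$
which follows immediately from $H'(x) = -(1+\log x)$ and the definition of $\mu_\varphi$. Writing $g(x) := \log \mu_H(x, x+t)$, one computes directly that $g'(x) = (\log(x+t) - \log x)/t > 0$ and $g''(x) = -1/(x(x+t)) < 0$, which gives (i) at once. For (ii), $f = \mu_H = e^g$ satisfies $f'' = f\bigl(g'' + (g')^2\bigr)$, and after the substitution $u = t/x$ the required inequality $f'' < 0$ reduces to the elementary estimate $\log(1+u) < u/\sqrt{1+u}$ for $u > 0$, which follows by a one-line derivative comparison.

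For (iii), the upper bound is Jensen's inequality for the concave function $\log z$ against the uniform measure on $[x, x+t]$. The lower bound reduces to strict monotonicity of $x \mapsto \mu_H(x, x+t) - x$, which is the statement $g'(x)\mu_H > 1$, itself equivalent to the classical identric-logarithmic mean inequality $I(x, x+t) > L(x, x+t)$; this follows from Jensen for the convex function $u \mapsto e^{-u}$ against the probability measure $\tfrac{1}{y-x}e^u\, du$ on $[\log x, \log y]$. The boundary value $\mu_H(0, t) = t/e$ then supplies the constant $1/e$.

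Parts (v) and (vi) drop out of the difference identity
$$\log\mu_H(x+\delta, x+\delta+t) - \log\mu_H(x, x+t) = \frac{1}{t}\int_0^\delta \log\!\left(1 + \frac{t}{x+u}\right)du;$$
differentiating in $t$ reduces (v) pointwise to the standard inequality $\log(1+\theta) > \theta/(1+\theta)$ for $\theta > 0$, while differentiating in $x$ proves (vi) directly, with a manifestly negative integrand. For (iv), I would rewrite $\partial_x \mu_H(x, x+t) = \mu_H(x, x+t)/L(x, x+t)$ and ask whether the ratio $I/L$ is increasing in $y$ for fixed $x$, where $y = x+t$. Taking the logarithm and differentiating, the claim reduces to $\rho - 1/\rho > 2\log\rho$ for $\rho = \sqrt{y/x} > 1$, another routine comparison of derivatives at $\rho = 1$.

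The substance of the lemma is (vii). Taking logs and arguing by contrapositive, it suffices to show that if $0 < p < q < r < s$ and $qr \leq ps$, then $\Phi(q) + \Phi(r) \leq \Phi(p) + \Phi(s)$, where $\Phi(x) := \int_x^{x+t}\log z\, dz = t\log\mu_H(x, x+t)$. Setting $\psi(y) := \Phi(e^y)$ converts the hypothesis $qr \leq ps$ into the arithmetic statement $b + c \leq a + d$ with $a := \log p < b := \log q < c := \log r < d := \log s$, and the conclusion into $\psi(b) + \psi(c) \leq \psi(a) + \psi(d)$. A direct calculation yields
$$\psi''(y) = u\left(\log(1 + t/u) - \frac{t}{u+t}\right), \qquad u = e^y,$$
which is strictly positive by the same $\log(1+\theta) > \theta/(1+\theta)$ inequality used in (v), so $\psi$ is strictly convex; and $\psi'(y) = u\log(1+t/u) > 0$ shows it is strictly increasing. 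The endgame is then a two-step majorisation argument: shift $(b,c)$ to $(b+\epsilon, c+\epsilon)$ with $\epsilon := (a+d-b-c)/2 \geq 0$ so that the sum matches $a+d$; apply Schur convexity of the symmetric function $(y_1, y_2) \mapsto \psi(y_1) + \psi(y_2)$ (equivalent to convexity of $\psi$) since $(a,d)$ majorises the shifted pair; and then use monotonicity of $\psi$ to re-absorb the translation by $\epsilon$. The main obstacle is spotting the exponential substitution $y = \log x$, which is what converts the geometric hypothesis $qr \leq ps$ into an arithmetic majorisation statement on which the standard Schur-convexity machinery acts cleanly. Once that is in place, every part of the lemma rests on the same small toolkit: the integral representation, Jensen's inequality, and the recurrent inequality $\log(1+\theta) > \theta/(1+\theta)$.
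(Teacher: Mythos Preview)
Your proof is correct, and it takes a genuinely different route from the paper's. The paper works from the explicit identric-mean formula $\mu_H(x,x+t)=e^{-1}(x+t)(1+t/x)^{x/t}$, refers several parts (i, ii, iv) to computations done in an external reference, omits (iii) entirely, and for (vii) introduces the auxiliary function $\chi_\lambda=\frac{(p+\lambda)^{p+\lambda}(s+\lambda)^{s+\lambda}}{(q+\lambda)^{q+\lambda}(r+\lambda)^{r+\lambda}}$, showing it is increasing in $\lambda$ whenever $qr<ps$ by combining the derivative formula $\chi_\lambda'=\chi_\lambda\log\frac{(p+\lambda)(s+\lambda)}{(q+\lambda)(r+\lambda)}$ with a separate lemma that $qr<ps\Rightarrow q+r<p+s$ for ordered $p<q<r<s$. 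Your argument, by contrast, is self-contained and unified: the integral representation $\log\mu_H(x,x+t)=\tfrac{1}{t}\int_x^{x+t}\log z\,dz$ does all the work, and every part ultimately rests on Jensen plus the single inequality $\log(1+\theta)>\theta/(1+\theta)$. The most substantive divergence is in (vii): your exponential substitution $y=\log x$ converts the multiplicative hypothesis $qr\le ps$ directly into the additive majorisation statement $b+c\le a+d$, so that convexity and monotonicity of $\psi$ finish the job without any appeal to the $q+r$ versus $p+s$ implication that the paper needs. This buys you a cleaner and more portable argument; the paper's $\chi_\lambda$ approach, on the other hand, has the virtue of making the dependence on $t$ explicit, which it later reuses in the antisymmetry proof for the partial order.
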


Let $y>x>0$: then we note that (iii) says that the Lagrangian mean of $x$ and $y$ occurs between $x+\frac{y-x}{e}$ and $x+\frac{y-x}{2}$. Both extremes occur in the limit, so \it a priori \rm we cannot narrow the range down further than this.

\begin{proof}
First, solving~(\ref{lagrean}) explicitly for $\varphi=H$ we see that $\mu_H$ is in fact what is known as the {\it identric mean of $x$ and $y$\rm}:
\begin{equation*}
\mu_H(x,y) = e^{-1}\left(\frac{y^y}{x^x}\right)^\frac{1}{y-x},
\end{equation*}
or if we set $t=y-x$:
\begin{eqnarray*}
\mu_H(x,x+t) & = & e^{-1}\left(\frac{(x+t)^{(x+t)}}{x^x}\right)^\frac{1}{t}\\
             & = & e^{-1}(x+t)(1+\frac{t}{x})^\frac{x}{t}\ .
\end{eqnarray*}

Now parts (i) and (ii) are proven as lemma~6 of~\cite{me} and since (iii) follows by similar techniques we omit the proof. Part~(iv) follows by taking the derivative 
$\frac{\partial^2}{\partial t\partial x}\left(\mu_H(x,x+t)\right)$ and observing that its sign is the same as the sign of
$$\frac{t^2}{x(x+t)}-\log^2(1+\frac{t}{x}),$$
which is shown to be positive in the course of proving the above lemma in~\cite{me}.

Part (vi) follows directly by taking the partial derivative of~$\frac{\mu_H(x+\delta,x+\delta+t)}{\mu_H(x,x+t)}$ with respect to $x$; taking the partial derivative with respect to $t$ instead we see that part~(v) boils down to the inequality
$$(1+\frac{t}{x})^x < (1+\frac{t}{x+\delta})^{x+\delta},$$
which on taking derivatives is seen to be a standard fact
$$\log(1+\frac{t}{x})>\frac{t}{x+t}$$
about logarithms~\cite{hardy}.

To prove (vii): let $w\leq x\leq y\leq z$ be any 4 positive real numbers arranged in the order shown, and let $\lambda\geq0$. Define a positive real function
$$\chi_\lambda(w,x,y,z) = \frac{(w+\lambda)^{(w+\lambda)}(z+\lambda)^{(z+\lambda)}}{(x+\lambda)^{(x+\lambda)}(y+\lambda)^{(y+\lambda)}}.$$
Then it follows from the explicit form for $\mu_H$ above that
$$\left[ \frac{\mu_H(q,q+t)\mu_H(r,r+t)}{\mu_H(p,p+t)\mu_H(s,s+t)}\right] ^t = \frac{\chi_0(p,q,r,s)}{\chi_t(p,q,r,s)}.$$
From now on we shall simply write $\chi_\lambda$ for $\chi_\lambda(p,q,r,s)$, for any $\lambda\geq0$, with $0<p<q<r<s$ understood as in the statement of the lemma.
Since $t>0$ by assumption and since the term in square brackets is always positive it follows that
$$\frac{\mu_H(q,q+t)\mu_H(r,r+t)}{\mu_H(p,p+t)\mu_H(s,s+t)}>1 \Leftrightarrow \frac{\chi_0}{\chi_t}>1.$$
So to prove (vii) it is enough to show that
$$\frac{\chi_0}{\chi_t}>1\ \Rightarrow\ qr>ps.$$
Now
\begin{equation}\label{ching}
\chi_\lambda' = \frac{\partial}{\partial\lambda}\left(\chi_\lambda\right) = \chi_\lambda
\log\frac{(p+\lambda)(s+\lambda)}{(q+\lambda)(r+\lambda)}
\end{equation}
and so for $\lambda\geq0$ (again noting that $\chi_\lambda$ is always positive) it follows that the sign of $\chi_\lambda'$ is exactly the sign of
\begin{equation}\label{bagosh}
(p+\lambda)(s+\lambda)-(q+\lambda)(r+\lambda)=ps-qr+((p+s)-(q+r))\lambda.
\end{equation}
So suppose $qr<ps$. Lemma~4 of~\cite{me} shows that $q+r>p+s\ \Rightarrow\ qr>ps,$ whence
$$qr<ps\ \Rightarrow\ q+r<p+s,$$
which shows that the sign in (\ref{bagosh}) must be positive for all $\lambda\geq0$. So $\chi_\lambda$ is an increasing function of $\lambda\geq0$, which means in particular that $\frac{\chi_0}{\chi_t}<1$.

Hence $\frac{\chi_0}{\chi_t}>1$ must indeed imply that $qr>ps$, as claimed.
\end{proof}

\begin{remark}
The significance of condition (vii) of lemma~\ref{lollipop} is that it may be used to derive a \emph{necessary} condition for $\rhd$, which in the $2\times3$-case in combination with proposition~\ref{titrate} yields necessary and sufficient conditions for the relation $\rhd$ between two permutations related by a single transposition. See theorem~\ref{cripes} below.
\end{remark}

\newpage

\section{The analytical construction of an entropic partial order $\epo$ for the $2\times 3$ case}\label{qmpo}

\subsection{The entropic relation $\rhd$ does give rise to a partial order}

Let $m,n\in\NN$ be arbitrary. So far we have constructed an abstract framework for the study of the binary relation~$\rhd$ between elements of $\Smn$ based on the~entropy function~$H$. Moreover we have shown that it is a necessary condition for `majorisation' between matrices related by a permutation, in the sense of definition~\ref{critta}. We now prove that it does indeed give rise to a partial order on the quotient of $\Smn$ by the subgroup~$K_{mn}$ generated by the appropriate~$H$-invariant (and so also~CMI-invariant) matrix transformations.

\begin{proposition}
The binary relation $\rhd$ gives a well-defined partial order on the coset space of the symmetric group~$\Smn$ modulo its subgroup~$K=K_{mn}$ of row- and column-swaps (together with the transpose operation if~$m=n$).
\end{proposition}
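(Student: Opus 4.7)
The plan is to check the four conditions required for $\rhd$ to descend to a well-defined partial order on $K\backslash\Smn$: invariance under $K$-multiplication (well-definedness), reflexivity, transitivity, and antisymmetry modulo $K$.

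$K$-invariance is built into the definition of $K$: its generators are exactly those permutations which, applied to the matrix $P^\sigma$, implement a row swap, a column swap, or (when $m=n$) the transpose. Each leaves the multiset of row sums and the multiset of column sums of $P^\sigma$ unchanged (the transpose merely interchanges the two multisets). Since $H$ acts on a vector as a symmetric function of its coordinates, the sum $H(\mathbf{r}(P^\sigma)) + H(\mathbf{c}(P^\sigma))$, and so $I(P^\sigma)$ itself, is invariant as $\sigma$ ranges over a fixed $K$-coset, giving well-definedness. Reflexivity $\sigma\rhd\sigma$ is immediate, and transitivity is inherited from the fact that $\rhd$ is defined by the family of inequalities $I(P^\sigma)\leq I(P^{\sigma'})$ enforced uniformly over $P\in\fdM_{mn}$.

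Antisymmetry is the substantive step. Suppose $\sigma\rhd\sigma'$ and $\sigma'\rhd\sigma$; then
\[
f_\sigma(P)\;:=\;H(\mathbf{r}(P^\sigma))+H(\mathbf{c}(P^\sigma))
\]
coincides with $f_{\sigma'}(P)$ on $\fdM_{mn}$. Both functions are real-analytic on the interior of $\Delta^{mn-1}$ and $\fdM_{mn}$ has non-empty interior there, so the identity theorem propagates the equation to the entire open simplex, and continuity extends it to the closure. I then probe $f_\sigma=f_{\sigma'}$ with a two-support test distribution: for any pair of distinct indices $a,b\in\{1,\dots,mn\}$ and $x\in(0,1)$, set $p_a=x$, $p_b=1-x$ and $p_k=0$ for $k\notin\{a,b\}$. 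Using $H(0)=H(1)=0$, a one-line computation gives
\[
f_\sigma(P)\;=\;\begin{cases} H(x)+H(1-x) & \text{if $a,b$ share a row or column of $P^\sigma$,}\\ 2\bigl(H(x)+H(1-x)\bigr) & \text{otherwise.}\end{cases}
\]
As $H(x)+H(1-x)>0$ on $(0,1)$, the two cases are numerically distinguishable, so $f_\sigma=f_{\sigma'}$ forces the labelled rook graphs $\Lambda_\sigma,\Lambda_{\sigma'}$ on $\{1,\dots,mn\}$ (edge set $=$ pairs incident in a common row or column of the image matrix) to agree. The classical identification of the automorphism group of the rook graph on an $m\times n$ grid -- namely $S_m\times S_n$ when $m\neq n$, or this group extended by the row-column transpose when $m=n$ -- with our subgroup $K$ under its standard embedding in $\Smn$ then delivers $\sigma\equiv\sigma'\pmod{K}$, as required.

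The main obstacle is this antisymmetry step. We have access only to the \emph{sum} $H(\mathbf{r})+H(\mathbf{c})$, never to either summand individually, so the row and column partitions cannot be read off from $f_\sigma$ by inspection. The two-support probe circumvents this: by switching off all but two entries of $P$, it collapses almost every row- and column-sum to $0$, reducing $f_\sigma$ to a quantity depending only on whether the two active indices share a row or a column. This converts the analytic identity $f_\sigma=f_{\sigma'}$ into the combinatorial one $\Lambda_\sigma=\Lambda_{\sigma'}$, after which the known structure of the rook graph's automorphism group finishes the argument.
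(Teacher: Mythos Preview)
Your argument is correct and takes a genuinely different route from the paper. The paper handles antisymmetry by an induction on the number of transpositions in $\sigma^{-1}\sigma'$: for a single transposition it differentiates the quantity in~(\ref{wing}) (via the auxiliary function $\chi_\lambda$) to force a contradiction, and for longer words it specialises $P$ so that all but one transposition acts trivially, then invokes strict Schur-concavity of $H$ to reduce to the single-transposition case. Your proof instead passes from the ordered chamber $\fdM_{mn}$ to the whole simplex by real-analytic continuation, probes the resulting identity $f_\sigma\equiv f_{\sigma'}$ at two-support boundary points, and reads off the labelled rook graph $\Lambda_\sigma$; the classical computation $\mathrm{Aut}(K_m\,\square\,K_n)\cong K$ then finishes.

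Each approach has its virtues. Yours is cleaner and more conceptual: it explains \emph{why} $K$ is exactly the right subgroup (it is the full automorphism group of the row/column incidence structure), avoids the somewhat delicate case analysis on transposition words, and would transfer verbatim to any real-analytic strictly concave $H$ with $H(0)=0$. The paper's argument, by contrast, never leaves the interior of $\fdM_{mn}$ and never invokes analytic continuation or boundary behaviour; it is more self-contained within the entropic machinery already developed, at the cost of being more ad hoc. One minor point worth making explicit in your write-up: the rook-graph step yields $\sigma'^{-1}\sigma\in\mathrm{Aut}(\Lambda_0)=K$, which is equality of the cosets $\sigma K=\sigma'K$; this matches the CMI-invariance (row/column swaps of $P^\sigma$ are right multiplications by $K$), so the conclusion is exactly what is needed regardless of whether one phrases the quotient as $G/K$ or $K\backslash G$.
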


\begin{proof}
From its definition we see immediately that~$\rhd$ is reflexive and transitive. It is also anti-symmetric: let~$T_K$ be any right transversal of~$K$ in~$G=\Smn$. We need to show that if there exists a pair~$\sigma,\sigma'\in T_K$ for which both~$\sigma\rhd\sigma'$ and~$\sigma'\rhd\sigma$ hold simultaneously (meaning of course that~$I(P^\sigma)=I(P^{\sigma'})$ for all~$P\in\fdM_{mn}$) then in fact~$\sigma=\sigma'$.

We proceed by a kind of induction on the number of transpositions needed to express $\sigma^{-1}\sigma'$. Suppose that a single transposition $\tau$ takes $\sigma$ to $\sigma'$:
$$\sigma^{-1}\sigma' = \tau.$$
Our hypothesis that $I(P^\sigma)=I(P^{\sigma'})$ for all~$P\in\fdM_{mn}$ means that the quantity in~(\ref{wing}) is always zero; hence in particular its derivative with respect to~$t=\alpha-\beta$ will be zero. Recall the function~$\chi_\lambda$ which we defined in order to study the effect of varying~$t$ inside the expression~(\ref{wing}): if we look at its first partial derivative with respect to~$\lambda$ we find the expression in~(\ref{ching}). Now by our hypothesis the value of~$\chi_\lambda$ is always~1 and so the expression~(\ref{ching}) reduces to~$\log\frac{(p+\lambda)(s+\lambda)}{(q+\lambda)(r+\lambda)}$ with the~$p,q,r,s$ being some appropriate ordering of the four terms in~(\ref{titanic}). Our hypothesis implies this is identically zero, which clearly is nonsense as we vary~$\lambda$ provided we do not always have equality between the sets~$\{p,s\}$ and~$\{q,r\}$, which in the general case we do not. So for the case where~$\sigma^{-1}\sigma'$ is assumed to be a single transposition we have produced a contradiction: so indeed~$\sigma=\sigma'$.

Next suppose that 
$$\sigma^{-1}\sigma' = \tau_1\tau_2,$$
a product of two distinct transpositions. Without loss of generality we may assume that~$\tau_1$ interchanges two positions which `bracket' at most one of the positions interchanged by~$\tau_2$ (in the sense that if the two positions swapped by~$\tau_1$ are occupied by the same value $x$ then that must also be true of every other position which is in-between these positions in the ordering of the entries, and hence at most one of those positions swapped by~$\tau_2$ will be forced to be occupied by the same number $x$, but not both). If this is not the case we swap~$\tau_1$ with~$\tau_2$ and the argument will go through unchanged.
So let~$P$ be such a matrix, where the two positions swapped by~$\tau_1$ are occupied by the same value~$\delta$ say, but where one or both of the positions swapped by $\tau_2$ (depending on whether there is an overlap of one of them with~$\tau_1$) are assigned one or two different values. The key thing is that the values for~$\tau_2$ be different from one another and that at least one of them be different from~$\delta$. By construction the transposition~$\tau_1$ will have no effect on the~CMI of~$P^\sigma$, so by our hypotheses the transposition~$\tau_2$ cannot change the value either. Since we have factored out by the~$K$-symmetry of the matrices, by the \emph{strict Schur-concavity} of the entropy function~\cite[\S3A]{marshall} any two distinct column sum vectors (respectively, row sum vectors) which are not permutations of one another will yield different entropies, and therefore \emph{ceteris paribus} different CMI's. Now if~$\tau_2$ were to swap two elements of the same row, then clearly the column sum vector would change but the row vector would not, giving a different~CMI; a similar argument goes for two elements of the same column. So $\tau_2$ must be a \emph{diagonal transposition} (see definition~\ref{dhv}), swapping elements which lie both in different rows and in different columns; moreover the difference between the entropy of the row vectors before and after the action by~$\tau_2$ must be exactly equal to that between the column vectors, with the opposite sign. But then we are back to the convexity argument for the case of a single transposition above.

The general case follows by the same argument, noting that we may have to reduce either to the first or the last transposition in the expression for~$\sigma^{-1}\sigma'$ depending on the `bracketing' effect mentioned above.
\end{proof}

\subsection{The existence of a unique maximum CMI configuration in the $2\times3$ case}

For the rest of the paper we specialise to the case where~$m=2$ and~$n=3$, and we shall often merely state many of the results from~\cite{me}. The sections on definitions are identical in many places to those in~\cite{me} but are reproduced here for convenience.

From now on we denote our six probabilities by~$\{a,b,c,d,e,f\}$ and assume that they satisfy~$a\geq b\geq c\geq d\geq e\geq f\geq0$ and~$a+b+c+d+e+f=1$. In the main we shall treat these as though they were \bf strict \rm inequalities in order to derive sharper results. However we shall occasionally require recourse to the possibility that one or more of the relations be an equality: see for example the proof of theorem~\ref{cripes}.

We state the main theorem from~\cite{me}:

\begin{theorem}\label{biggun}
The matrix
$$X = \left( \begin{array}{ccc}a & d & e\\
f & c & b\end{array}\right)$$
has \bf maximal \it CMI among all $720$ possible $2\times3$ arrangements of $\{a,b,c,d,e,f\}$.

\bf This is the case \emph{irrespective of the relative sizes of} $\mathbf {a,b,c,d,e,f}$.\rm\qed
\end{theorem}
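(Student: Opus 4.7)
The plan is to prove maximality by showing that $\sigma\rhd\sigma_X$ for every $\sigma\in\Ssix$, where $\sigma_X$ denotes a permutation producing $X$; equivalently, $I(P^\sigma)\leq I(P^X)$ for all $P\in\fdM_{mn}$. Since $\rhd$ is $K$-invariant and transitive, it suffices to work in the $60$-element coset space $K\backslash\Ssix$ and to show, for each of the $59$ classes $[\sigma]\neq[X]$, that some chain of single-transposition comparisons climbs from $[\sigma]$ up to $[X]$.

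I would begin by identifying the structural features that make $X$ natural. Its column-sum multiset $\{a+f,\,c+d,\,b+e\}$ comes from the unique anti-sorted pairing of the six ordered values into three pairs, and by the rearrangement inequality this is the most balanced such pairing, hence the one that maximises the entropy of the column marginal over all row-/column-symmetry classes. The row partition $\{a,d,e\}\mid\{b,c,f\}$ is also essentially balanced among all triple-splits of the sorted list. Heuristically this already explains why $X$ should top $\epo$, and it reduces many individual comparisons to a direct application of Proposition~\ref{majmcmaj}: whenever both marginal vectors at the destination majorise those at the source, the implication $\succ\Rightarrow\rhd$ closes the step.

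For the comparisons where neither row- nor column-majorisation alone settles the question, I would appeal to Proposition~\ref{titrate}. For a diagonal transposition $\tau$ bringing some competitor one step closer to $[X]$, the verification reduces to checking (a) that the minimum of the four marginals $r_\alpha^\tau,c_\alpha^\tau,r_\beta,c_\beta$ lies in the correct pair, and (b) that $r_\beta+c_\beta\leq r_\alpha^\tau+c_\alpha^\tau$ holds throughout $\fdM_{mn}$. Both conditions are linear in $a,\dots,f$ and so can be decided from the single chain $a\geq b\geq c\geq d\geq e\geq f$ together with elementary manipulations. In the residual cases where Proposition~\ref{titrate} does not apply directly, I would fall back on the identric-mean analysis of Lemma~\ref{lollipop}, in particular using parts (v), (vi) and (vii) to determine the sign of the quantity in (\ref{wing}).

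The main obstacle will be organising the finite case analysis so that every one of the $59$ non-$X$ cosets is genuinely accessible by some ascending chain. A handful of pairings are likely not reachable by a single-transposition path alone, and these would be early indications of the sporadic relations discussed later in the paper; for each such exceptional step I would either reroute through an intermediate class reachable by a combination of majorisation and Proposition~\ref{titrate} arguments, or carry out a bespoke comparison using $\mu_H$ and Lemma~\ref{lollipop}(vii). Once every coset is shown to satisfy $[\sigma]\rhd[X]$, the result follows; uniqueness of the maximum is then an immediate consequence of the antisymmetry of $\epo$ established in the preceding proposition.
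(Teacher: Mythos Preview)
The paper does not prove this theorem here; it is quoted from~\cite{me} and marked with a \qed. Your plan is in the same spirit as the argument in~\cite{me} as described in this paper (chains of single-transposition $\rhd$-relations, with Proposition~\ref{titrate} handling the non-majorisation steps), so at the level of strategy you are aligned with the original source.

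Two remarks are worth making. First, your proposed organisation is needlessly heavy. Rather than building a chain from each of the $59$ non-$X$ cosets individually, the paper's own framework gives a much sharper reduction: Lemma~\ref{treecritter} and Corollary~\ref{bighitter}(ii) show \emph{a priori} that the maximal CMI must occur among the five matrices $\varpi(A)$ for $A$ in~(\ref{minzoneup}), namely classes $\mathbf{24},\mathbf{42},\mathbf{48},\mathbf{54},\mathbf{60}$. Once that is in hand you need only four comparisons against $X=\mathbf{48}$, each of which is a single diagonal transposition handled directly by Proposition~\ref{titrate}. This is essentially the route taken in~\cite{me}.

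Second, your concern that sporadic relations might be required is misplaced for this particular result. The sporadics of Theorem~\ref{proveit} all target $\mathbf{10}$ or $\mathbf{11}$, which sit far from the bottom of $\epo$; the class $\mathbf{48}$ is already the unique minimum of the partial order generated by the $255$ single-transposition relations alone (their transitive closure has $818$ relations and $\mathbf{48}$ is below everything in it). So no ``bespoke comparison'' of the kind you anticipate is needed here, and you should not invoke Lemma~\ref{lollipop}(vii) or the later sporadic machinery for this theorem.

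Finally, as written your proposal is a plan rather than a proof: the actual verifications of the hypotheses of Proposition~\ref{titrate} for the relevant steps are not carried out.
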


\begin{remark}\label{ream}
It is worth pointing out that one may arrive at the conclusion of theorem~\ref{biggun} by a process of heuristic reasoning, as follows. Recall from definition~\ref{cmile} that the CMI consists of three components, of which the last one is identical for all matrices which are permutations of one another. So in order to understand maxima/minima we restrict our focus to the first two terms, namely the entropies of the marginal probability vectors. Now entropy is a measure of the `randomness' of the marginal probabilities: the more uniform they are the higher will be the contribution to the CMI from these row and column sum vectors. Beginning with the columns since in general they will contribute more to the overall entropy, if we look at the a priori ordering $a>b>c>d>e>f$ it is evident that the most uniform way of selecting pairs in general so as to be as close as possible to one another would be to begin at the outside and work our way in: namely the column sum vector should read $(a+f,\ b+e,\ c+d)$. Similarly for the row sums: we need to add small terms to $a$, but the position of $f$ is already taken in the same column as $a$, so that just leaves $d$ and $e$ in the top row, and $c$ and $b$ fill up the bottom row in the order dictated by the column sums. See also the final appendix of~\cite{me} where in fact we can achieve a total ordering by the same method for the simpler case of 2x2 matrices.
\end{remark}

\subsection{The canonical matrix class representatives $\Rtt$ and the identification with a quotient of $\Ssix$}\label{scub}

There are $6!=720$ possible permutations of the fixed probabilities $\{a,b,c,d,e,f\}$, giving a set of matrices in the usual way which we shall refer to throughout as~$\Mtt$. However since simple row and column swaps do not change the CMI, and since there are $12=|\mathbf{S}_3|.|\mathbf{S}_2|$ such swaps, we are reduced to only $60=720/12$ different possible values for the CMI (provided that the probabilities $\{a,b,c,d,e,f\}$ are all distinct: clearly repeated values within the elements will give rise to fewer possible CMI values). We now classify these 60 classes of matrices according to rules which will make our subsequent proofs easier, defining a fixed set of matrices which will be referred to as~$\Rtt$. 

Throughout we shall use the symbol $K=K_6$ for the subgroup of $G=\Ssix$ generated by the row- and column-swaps referred to just now. This is the same subgroup~$K$ as we shall use in section~\ref{algview}. In the usual cycle notation
\begin{equation}\label{Kdef}
K\ \ =\ \ \langle\ \ (1,2)(4,5)\ ,\ (1,3)(4,6)\ ,\ (1,4)(2,5)(3,6)\ \ \rangle\ \ <\ \ G,
\end{equation}
where we fix for the remainder of this paper the convention that cycles multiply from right to left; so for example~$(1,2)(2,3)=(1,2,3)$ and not~$(1,3,2)$ as many authors write.
It follows that given any permutation $\sigma\in G$ the action of $K$ on rows and columns is via \emph{left multiplication}, meaning our~60 CMI-equivalence classes correspond to \emph{right cosets} of~$K$ in~$G$; whereas permutations to move us from one right~$K$-coset to another act via \emph{multiplication on the right}. 

Since we may always make $a$ the top left-hand entry of any of the matrices in~$\Mtt$ by row and/or column swaps, we set a basic form for our matrices as $M = \left( \begin{array}{ccc}a & x & y\\u & v & w\end{array}\right)$, where (as sets) $\{x,y,u,v,w\}=\{b,c,d,e,f\}$.
This leaves us with only $5!=120$ possibilities which we further divide in half by requiring that $x>y$. So our final form for representative matrices will be:
\begin{equation}
M = \left( \begin{array}{ccc}a & x & y\\
u & v & w\end{array}\right),{\rm\ with\ }x>y.
\label{mates}
\end{equation}

This yields our promised~60 representatives~$\Rtt$ in the form (\ref{mates}) for the~60 possible~CMI values associated with the \bf fixed \rm set of probabilities~$\{a,b,c,d,e,f\}$. 
We shall both implicitly and explicitly identify this set~$\Rtt$ with a set of coset representatives for~$K\backslash G$: and given two matrix classes~$M$,~$N$ the statement that~$M\succ N$ or~$M\rhd N$ will be taken to mean that the corresponding coset representatives satisfy such a relation.
We now need to subdivide~$\Rtt$ as follows. Matrices whose rows and columns are arranged in descending order will be said to be in \it standard form\rm.  It is straightforward to see that only five of the~60 matrices we have just constructed have this form, namely matrix classes~$\mathbf{1},\mathbf{7},\mathbf{13},\mathbf{25}$ and~$\mathbf{31}$ from appendix~\ref{frog} which are explicitly:
\begin{equation}
\label{minz}
\left( \begin{array}{ccc}a & b & c \\d & e & f\end{array}\right),
\left( \begin{array}{ccc}a & b & d \\c & e & f\end{array}\right),
\left( \begin{array}{ccc}a & b & e \\c & d & f\end{array}\right),
\left( \begin{array}{ccc}a & c & d \\b & e & f\end{array}\right),\textrm{ and }
\left( \begin{array}{ccc}a & c & e \\b & d & f\end{array}\right).
\end{equation}
Notice that all of these are in the form~(\ref{mates}) with the additional condition that~$u>v>w$.
If we allow the bottom row of any of these to be permuted we obtain~$5=|\mathbf{S}_3|-1$ new matrices which are not in standard form. In all this gives a total of~$30$ matrices split into five groups of~$6$, indexed by each matrix in~(\ref{minz}).

Now consider matrices in~$\Rtt$ which cannot be in standard form by virtue of having top row entries which are `too small' but nevertheless which still have the rows in descending order, viz:
\begin{equation}
\label{minzoneup}
\left( \begin{array}{ccc}a & b & f \\c & d & e\end{array}\right),
\left( \begin{array}{ccc}a & c & f \\b & d & e\end{array}\right),
\left( \begin{array}{ccc}a & d & e \\b & c & f\end{array}\right),
\left( \begin{array}{ccc}a & d & f \\b & c & e\end{array}\right),\textrm{ and }
\left( \begin{array}{ccc}a & e & f \\b & c & d\end{array}\right).
\end{equation}
Once again, by permuting the bottom row of each we obtain five new matrices: again a total of $30$ matrices split into five groups of $6$, indexed by each matrix in (\ref{minzoneup}).
This completes our basic categorization of the subsets of matrices in~$\Rtt$. 

Here are a few results from~\cite{me} which help us to classify the relations between the~$\Rtt$ classes. Call two matrices $M = \left( \begin{array}{ccc}a & p & q\\r & s & t\end{array}\right),\ \ N = \left( \begin{array}{ccc}a & x & y\\u & v & w\end{array}\right)$ \it lexicographically ordered \rm if the pair of row vectors $(a,p,q,r,s,t)$ and $(a,x,y,u,v,w)$ is so ordered (ie the word ``apqrst'' would precede the word ``axyuvw'' in an English dictionary).

\begin{lemma}
We may order the matrices in $\Rtt$ lexicographically, and majorisation respects that ordering.\qed
\end{lemma}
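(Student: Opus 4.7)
The plan is to show that for distinct $M, N \in \Rtt$ with $M \succ N$, the 6-tuple $(a, x_M, y_M, u_M, v_M, w_M)$ lex-precedes $(a, x_N, y_N, u_N, v_N, w_N)$. The first coordinate is $a$ in both, so the comparison begins at position two. The argument naturally splits according to whether the top rows of $M$ and $N$ agree.

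If the top rows agree, the row-sum vectors coincide and the hypothesis reduces to $\mathbf{c}(M) \succ \mathbf{c}(N)$. The two bottom rows are then permutations of a common triple $\{p, q, r\}$ with $p > q > r$. I would list the six permutations in lex order as $(p,q,r)$, $(p,r,q)$, $(q,p,r)$, $(q,r,p)$, $(r,p,q)$, $(r,q,p)$, compute the column-sum vectors $(a+u, x+v, y+w)$ for each, and verify by a direct partial-sum comparison (using only $a>x>y$ and $p>q>r$) that the column-majorisation relations among the six agree with lex order on their bottom rows. A short calculation shows the only incomparable pairs are $\{(p,r,q),(q,p,r)\}$ and $\{(q,r,p),(r,p,q)\}$, so no column-majorisation relation contradicts lex.

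If the top rows differ, I would deduce the required lex ordering from row-majorisation alone. Evaluate $\mathbf{r}(M) \succ \mathbf{r}(N)$ at any $P \in \fd_6$ with $a$ close to $1$ (and $b, c, d, e, f$ small but with their strict decreasing order preserved): in this corner the top row is unambiguously the row of larger sum in both matrices, so row-majorisation forces $x_M + y_M \geq x_N + y_N$ identically in those small probabilities. This symbolic inequality in turn forces the ordered pair $(x_M, y_M)$ with $x_M > y_M$ to dominate $(x_N, y_N)$ position-by-position as probabilities, i.e. $x_M \geq x_N$ and $y_M \geq y_N$. Translating back into the alphabet $a < b < c < d < e < f$, $x_M$ is alphabetically no later than $x_N$ and similarly for $y_M$ versus $y_N$, whence the top row of $M$ lex-precedes that of $N$, as required.

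The main obstacle is the partial-sum enumeration in the first case, which is routine but amounts to $\binom{6}{2}=15$ sorted-vector comparisons that each need to be read off correctly from the ordering hypotheses. The second case is conceptual but leans on the standard majorisation-theoretic fact that pointwise dominance of sorted subsets is the universal condition characterising a uniform-in-$P$ inequality between sums of order statistics on the simplex.
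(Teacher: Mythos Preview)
The paper does not actually prove this lemma here; it is stated with a bare \qed and attributed to~\cite{me}. So there is no in-paper proof to compare against, and your argument must stand on its own.

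Your two-case split is the right one, and Case~2 is clean: the ``$a$ close to~$1$'' limit correctly isolates the top-row-sum inequality, and the combinatorial fact you invoke (that a universal inequality $p_i+p_j\geq p_k+p_l$ with $i<j$, $k<l$ forces $i\leq k$ and $j\leq l$) is exactly what is needed.

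Case~1 has a small gap. Your enumeration using only $a>x>y$ and $p>q>r$ reproduces precisely the Hasse diagram of Lemma~\ref{treecritter}: thirteen lex-consistent column-majorisation relations and two generically incomparable pairs $\{(p,r,q),(q,p,r)\}$ and $\{(q,r,p),(r,p,q)\}$. But ``generically incomparable'' is weaker than ``incomparable for every specific top row in $\Rtt$''. Indeed the paper itself flags the exception $\mathbf{46}\succ\mathbf{47}$, where the pair $\{(q,r,p),(r,p,q)\}$ \emph{does} become comparable once the full ordering of $a,\ldots,f$ is used. So your sentence ``the only incomparable pairs are~\ldots, so no column-majorisation relation contradicts lex'' does not quite close the argument: you still owe a check that neither pair can become comparable in the \emph{anti-lex} direction for any specific top row. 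This is immediate. For $\{(p,r,q),(q,p,r)\}$ the anti-lex direction would require $\max(a+q,\,x+p,\,y+r)\geq a+p$, but each of the three left-hand entries is strictly below $a+p$ since $a>x,y$ and $p>q,r$. For $\{(q,r,p),(r,p,q)\}$ one needs $\max(a+r,\,x+p,\,y+q)\geq a+q$ universally, and the same ``$a\to 1$'' trick from your Case~2 kills it. With that one-line addendum your proof is complete.
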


That is to say, if $M$ lies above $N$ lexicographically then $N$ \bf cannot \rm majorise $M$. Note that this is \bf not \rm the case for the relation $\rhd$. 

\begin{remark}
We have set out this ordering explicitly in appendix~\ref{frog}. We shall sometimes refer to matrix classes in~$\Rtt$ by these numbers: when we do so, they will appear in \bf bold \it figures as per the appendix. Equally we may refer to them by a right coset from $K\backslash G$, a representative of each of which is also tabulated in appendix~\ref{frog}.
\end{remark}

\begin{lemma}\label{treecritter}
Fix any matrix $M=\left( \begin{array}{ccc}a & x & y\\u & v & w\end{array}\right) \in\Rtt$ with the additional requirement that $u>v>w$. Permuting the elements of the bottom row under the action of the symmetric group $\Sym{3}$ we have the following majorisation relations:
\begin{equation}
\begin{array}{ccccccc}\label{flea}
\left( \begin{array}{ccc}a & x & y\\u & v & w\end{array}\right)                                 & \succ &
\begin{array}{c}\left( \begin{array}{ccc}a & x & y\\v & u & w\end{array}\right)\\
\left( \begin{array}{ccc}a & x & y\\u & w & v\end{array}\right) \end{array}                     & \succ &
\begin{array}{c}\left( \begin{array}{ccc}a & x & y\\w & u & v\end{array}\right)\\
\left( \begin{array}{ccc}a & x & y\\v & w & u\end{array}\right) \end{array}                     & \succ &
\left( \begin{array}{ccc}a & x & y\\w & v & u\end{array}\right)
\end{array}\end{equation}
There are no \rm a priori \it majorisation relations within the two vertical pairs, with the exception of the instance~$\mathbf{46}\succ\mathbf{47}$ in corollary~\ref{cannotprove}. \qed
\end{lemma}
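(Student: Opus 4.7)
The decisive reduction is that all six matrices in~(\ref{flea}) share the same top row $(a,x,y)$ and the same multiset $\{u,v,w\}$ in the bottom row, so their row sum vectors all equal $(a+x+y,\,u+v+w)$. Majorisation of the row sum vectors therefore holds trivially (with equality) for every pair, and by definition~\ref{critta} the matrix relation $\succ$ reduces entirely to majorisation of the column sum vectors. Each matrix contributes a column vector of the form $(a+\pi(u),\,x+\pi(v),\,y+\pi(w))$ as $\pi$ runs over $\Sym{3}$, and all six such vectors share the common total $a+x+y+u+v+w$.

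For any two real 3-vectors with a common total, majorisation $\mathbf{v}\succ\mathbf{w}$ is equivalent to the pair of inequalities $\max(\mathbf{v})\geq\max(\mathbf{w})$ and $\min(\mathbf{v})\leq\min(\mathbf{w})$. I would verify the seven arrows in~(\ref{flea}) via this criterion. Each arrow corresponds to an adjacent transposition in the bottom row, so two of the three column sums change from $(A+p,\,B+q)$ to $(A+q,\,B+p)$ for appropriate $A>B$ (coming from the top row) and $p>q$ (coming from the bottom row), while the third column sum is unchanged. The elementary string
\[
A+p\ \geq\ \max(A+q,B+p)\ \geq\ \min(A+q,B+p)\ \geq\ B+q
\]
then shows both that the maximum can only decrease and that the minimum can only increase under the swap, which, together with the invariance of the third column, yields the required majorisation of the full 3-vector.

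For the final assertion that there is no \emph{a priori} relation within each of the two vertical pairs (modulo the special case $\mathbf{46}\succ\mathbf{47}$ of corollary~\ref{cannotprove}), I would exhibit explicit numerical counterexamples: for the pair $\{(vuw),(uwv)\}$ one chooses probabilities so that $x+u>a+v$ (making $\max$ of $(vuw)$ equal to $x+u$) while simultaneously $y+v>x+w$ (changing the minimum of $(uwv)$), and then tunes the gaps so that max-max and min-min inequalities point in opposite directions; a symmetric construction handles $\{(wuv),(vwu)\}$.

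The main obstacle is bookkeeping rather than depth: when $a-x$ and $u-v$ (or $x-y$ and $v-w$) are of comparable size, several of the candidate maxima and minima among $\{A+p,B+q,A+q,B+p\}$ can tie or swap, so one must enumerate the cases carefully. However the two-column argument above is insensitive to these cases precisely because only the extremes of the swapped pair are needed, which is exactly why the single-transposition step collapses to the trivial two-term rearrangement inequality.
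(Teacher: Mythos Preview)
Your argument is correct, and the paper gives no proof of its own here (the lemma is imported from~\cite{me} and marked with a bare \qed), so there is nothing substantive to compare against. Two cosmetic points are worth tidying. First, the diagram encodes \emph{eight} relations, not seven: each element of one level majorises each element of the next, giving $2+4+2=8$. Second, two of those eight --- namely $(v,u,w)\succ(w,u,v)$ and $(u,w,v)\succ(v,w,u)$ --- are swaps of \emph{non-adjacent} bottom-row positions (columns $1$ and $3$). Neither point damages your argument: your two-column inequality
\[
A+p\ \geq\ \max(A+q,\,B+p)\ \geq\ \min(A+q,\,B+p)\ \geq\ B+q
\]
uses only $A>B$ (guaranteed by $a>x>y$, since $a$ is the global maximum and $x>y$ by the $\Rtt$ normalisation) and $p>q$ (from the hypothesis $u>v>w$), and one checks directly that all eight moves fit this template with the correct orientation. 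So simply drop the word ``adjacent'' and correct the count. Your max/min criterion for majorisation between equal-sum $3$-vectors is exactly the observation the paper itself invokes later (see the sentence following display~(\ref{majo})), so you are in good company.
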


Note that the rightmost matrix in~(\ref{flea}) corresponds to multiplication by the permutation $\varpi=(m_{21},m_{23})$ of the matrix $M=(m_{ij})$, that is:  
$\left( \begin{array}{ccc}a & x & y\\w & v & u\end{array}\right) = \varpi\left(\left( \begin{array}{ccc}a & x & y\\u & v & w\end{array}\right)\right).$
By proposition~\ref{majmcmaj} the fact that $A$ majorises $B$ implies that $I(A)<I(B)$, so the minimal value for the CMI among the \emph{representative} matrices in $\Rtt$ must occur in a matrix of the form on the left-hand side of~(\ref{flea}); conversely the maximum must occur in a matrix of the form on the right-hand side of~(\ref{flea}).

\begin{corollary}\label{bighitter}
Fix a choice of probabilities~$\{a,b,c,d,e,f\}$ as above, and consider the matrices in~$\Rtt$ as containing these fixed values. Then:

(i) there is some $M$ in (\ref{minz}) such that the \bf minimal\it\ value for the CMI of any matrix from the set $\Rtt$ is given by~$I(M)$; and

(ii) there is some $A$ in (\ref{minzoneup}) such that the \bf maximal\it\ value for the CMI of any matrix from the set $\Rtt$ is given by~$I(\varpi(A))$.\qed
\end{corollary}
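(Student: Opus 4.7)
The plan is to partition $\Rtt$ into its ten \emph{bottom-row orbits} and then locate the extrema among the orbit heads. The ten matrices displayed in~(\ref{minz}) and~(\ref{minzoneup}) are precisely the elements of~$\Rtt$ whose bottom row already satisfies $u>v>w$; every element of $\Rtt$ is obtained from exactly one of these ten heads by a permutation of its bottom row. For each such head~$N$, Lemma~\ref{treecritter} gives the majorisation chain in~(\ref{flea}) beginning at~$N$ and ending at~$\varpi(N)$, and Proposition~\ref{majmcmaj} translates this into a (weakly) descending chain in CMI. Hence within each six-element orbit the minimum CMI is attained at the head and the maximum at~$\varpi$ of the head, so the overall minimum over~$\Rtt$ must lie among the ten heads, and the overall maximum must lie among the ten matrices~$\varpi(N)$.

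Part~(ii) is then immediate from Theorem~\ref{biggun}: the matrix $X=\left(\begin{array}{ccc}a&d&e\\f&c&b\end{array}\right)$ singled out there is exactly $\varpi(A)$ for the third matrix $A=\left(\begin{array}{ccc}a&d&e\\b&c&f\end{array}\right)$ of~(\ref{minzoneup}), and by the remark above no matrix in $\Rtt$ beats this orbit maximum.

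For part~(i) I would exhibit, for each of the five heads $A$ in~(\ref{minzoneup}), a head $M$ in~(\ref{minz}) with $M\succ A$; by Proposition~\ref{majmcmaj} this yields $I(M)\leq I(A)$ and thus forces the overall minimum to be attained somewhere in~(\ref{minz}). Direct inspection suggests that the matrix $\left(\begin{array}{ccc}a&c&e\\b&d&f\end{array}\right)$ in~(\ref{minz}) majorises four of the five candidates from~(\ref{minzoneup}), while $\left(\begin{array}{ccc}a&b&e\\c&d&f\end{array}\right)$ handles the remaining matrix $\left(\begin{array}{ccc}a&b&f\\c&d&e\end{array}\right)$; in each pairing the column-sum multisets already agree (or are dominated after sorting), and the row-sum majorisation then drops out immediately from the ambient ordering $a>b>c>d>e>f$. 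The only real obstacle is the routine bookkeeping of those five comparisons --- in particular remembering to sort the column sums before checking the partial-sum dominance --- but the argument is otherwise mechanical, and everything else is already in place via Lemma~\ref{treecritter}, Proposition~\ref{majmcmaj} and Theorem~\ref{biggun}.
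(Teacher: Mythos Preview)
Your reduction via Lemma~\ref{treecritter} and Proposition~\ref{majmcmaj} to the ten orbit heads, and your appeal to Theorem~\ref{biggun} for part~(ii), are exactly what the paper intends; the corollary is stated with a bare \qed\ and the paragraph preceding it is its entire justification, so in fact you have supplied more detail for part~(i) than the paper does.

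There is, however, a genuine slip in your bookkeeping for part~(i). The matrix $\mathbf{31}=\left(\begin{smallmatrix}a&c&e\\b&d&f\end{smallmatrix}\right)$ does \emph{not} majorise $\mathbf{55}=\left(\begin{smallmatrix}a&e&f\\b&c&d\end{smallmatrix}\right)$: the column-sum vectors are fine, but on the rows one compares $(a+c+e,\,b+d+f)$ with $(a+e+f,\,b+c+d)$, and whenever $b+d>a+e$ (take e.g.\ $a=0.30,\ b=0.20,\ c=0.19,\ d=0.18,\ e=0.07,\ f=0.06$) the larger row sum of $\mathbf{55}$ is $b+c+d=0.57>0.56=a+c+e$, so row majorisation fails. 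Thus $\mathbf{31}$ handles only $\mathbf{37},\mathbf{43},\mathbf{49}$, not four of the five.

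The repair is painless and in fact cleaner than the pairing you proposed: for each head $A$ in~(\ref{minzoneup}) take the unique head $M$ in~(\ref{minz}) with the \emph{same} column-sum multiset (so $\mathbf{13}\!\to\!\mathbf{19}$, $\mathbf{25}\!\to\!\mathbf{55}$, $\mathbf{31}\!\to\!\mathbf{37},\mathbf{43},\mathbf{49}$). Column majorisation is then trivial, and since the matrices in~(\ref{minz}) are in standard form (each column already descending), their top row sum is maximal among all matrices with that column profile; hence $\mathbf r(M)\succ\mathbf r(A)$ automatically. With that one correction your argument goes through.
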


\subsubsection{Aside: the basic majorisation structure in pictures}

\begin{figure}[h!btp]
\begin{center}
\mbox{
\subfigure{\includegraphics[width=6in,height=6in,keepaspectratio]{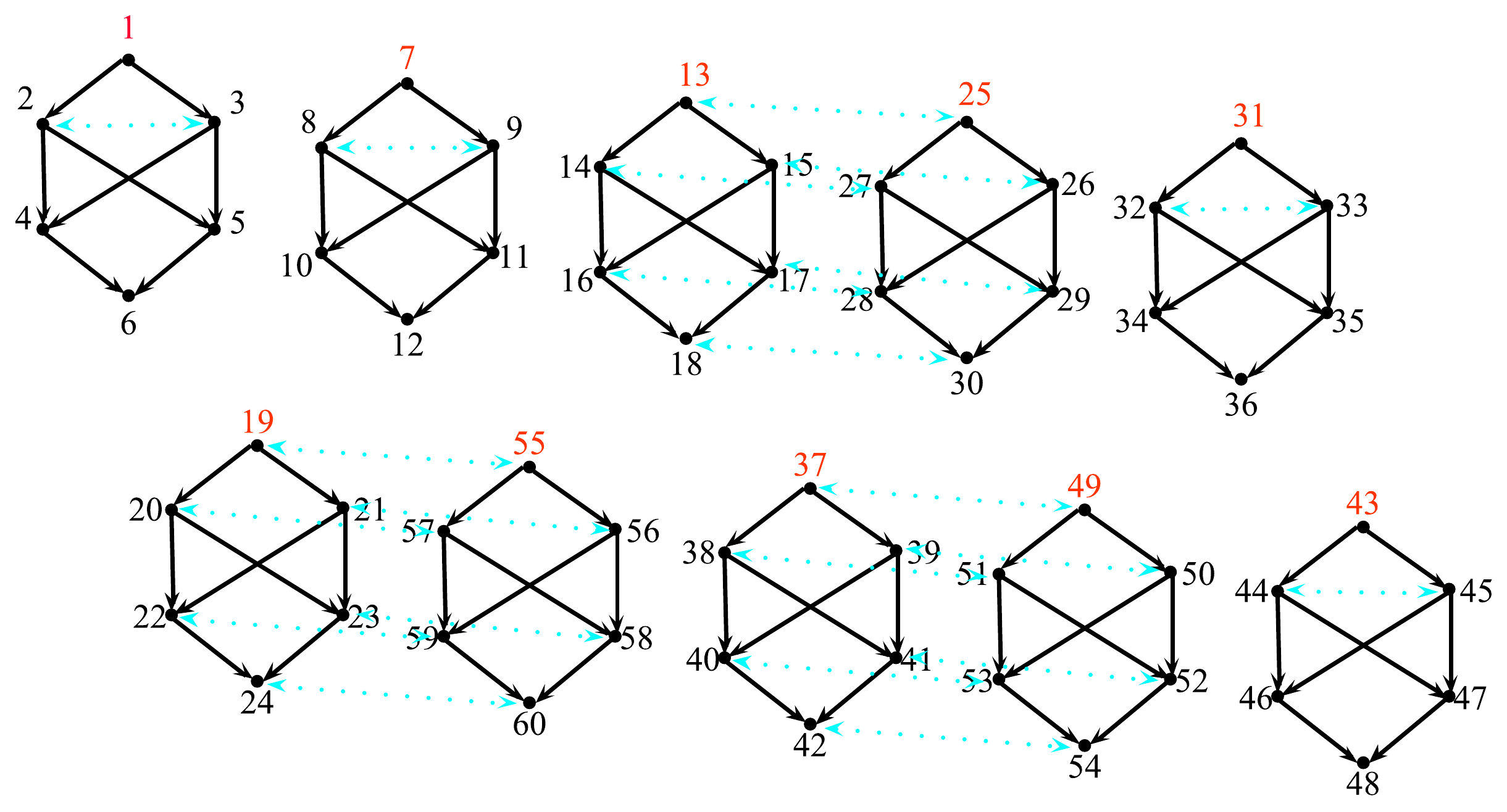}}
}
\end{center}
\caption{Representation of the most basic horizontal-transposition-based majorisation relations on $\Rtt$ together with the action of the involution $\xi$ (the blue arrows)}
\label{hexcomb}
\end{figure}

Using the simple majorisation relations developed in the foregoing discussion we have established a kind of `honeycomb' which is the backbone of the entropic partial order $\epo$ across all of $K\backslash G$.  Figure~\ref{hexcomb} shows the basic hexagonal frames corresponding to the majorisation orderings in~(\ref{flea}). The honeycomb consists of~10 hexagons each containing~6 matrices (one row of~5 slightly below the other reflecting the standard form classification), with each matrix linked via a hexagonal pattern to the other matrices in its own group.
Each hexagonal cell is in itself a diagram of the Bruhat order on $\mathbf{S}_3$. The 2 sets of 5 hexagons come from lemma~\ref{treecritter}; and the 12 lines of 5 matrices each (consisting of aligned vertices of the hexagons in their respective groupings) arise from variants of~(\ref{minz}) and~(\ref{minzoneup}). The red numbers represent the `major' element in each hexagon and are in fact all of the matrices in~(\ref{minz}) for the top row, and~(\ref{minzoneup}) for the bottom row. Note that we have placed the maximal CMI element $\mathbf{48}$ at the very bottom point, reflecting the fact that it lies below every other matrix in the $\rhd$-partial order. The minimal CMI will occur for a matrix on the very top row (matrices $\mathbf{1},\mathbf{7},\mathbf{13},\mathbf{25}$ or $\mathbf{31}$).

The numbering is as per appendix~\ref{frog}, ie the lexicographic ordering.
We have stuck to this ordering as much as possible in the diagram itself, trying to increase numbers within the hexagons as we move down and from left to right; however in places we have changed it slightly so that the patterns are rendered more clearly. The black arrows represent the majorisation relations in lemma~\ref{treecritter} which arise within each hexagon. 

The light blue double-headed arrows represent the action of the inner automorphism $\xi=\xi_\omega$ arising from the unique element $\omega=(1,6)(2,5)(3,4)\in\Ssix$ of maximal length~\cite{bjorner} which flips 22 pairs of matrix classes and fixes the remaining 16. Since this automorphism respects the binary relations $\rhd$ on $\Rtt$ it follows that any entropic binary relations (including of course majorisation) involving the nodes which have a blue arrow pointing to them will occur in pairs, thus considerably simplifying the structure. We shall explain this further in section~\ref{epoi}.

\subsection{Transpositions and the classes in $\Rtt\cong K\backslash G$}

To avoid confusion, the image of an individual matrix~$M\in\Mtt$ in~$\Rtt$ will be denoted by~$\widehat{M}$ (remember this is an equivalence class of~12 matrices and corresponds to a unique right coset of~$K$ in~$G$), and we shall denote by~$M^\ast$ its `canonical' representative in the original set~$\Rtt$ of~60 matrices: that is to say, a matrix of the form shown in~(\ref{mates}) or appendix~\ref{frog}.

We need to develop necessary and sufficient conditions for the relations~$\widehat{M}\succ \widehat{N}$ or~$\widehat{M}\rhd \widehat{N}$ in cases where \emph{matrices~$M$ and~$N$ are related by a single transposition}. However we are dealing with matrix \emph{classes}, so we need to be very clear about what we mean by saying that two matrices or matrix classes are `related by a single transposition'. 
Let $\sigma_1,\sigma_2\in G$ represent cosets $K\sigma_1,K\sigma_2\in K\backslash G$, and suppose that there is an element $\tau$ which takes $\sigma_1$ to $\sigma_2$. The translation action of $G$ is on the right, so this means that
\begin{equation}\label{tauer}
\sigma_1\tau = \sigma_2.
\end{equation}
Suppose now that we are given $k_1,k_2\in K$ and we wish to find $\tau'$ taking the representative $k_1\sigma_1$ to $k_2\sigma_2$: we find that it is 
$$\tau' = \sigma_1^{-1}k_1^{-1}k_2\sigma_1\tau,$$
and so since the middle $k_i$ product terms are all still just members of~$K$ we see that in each case we shall have a family of~12 distinct elements of $G$ mapping us between the respective cosets. So $\tau$ may well be a transposition, but its other cohorts will in general not be. However consider the case where $\tau,\tau'$ are both transpositions. Then
$$\tau'\tau=\tau'\tau^{-1}\in K^{\sigma_1^{-1}}$$
and so in particular $\tau'\tau$ must lie in the same $G$-conjugacy class as an element of $K$. But as products of two transpositions,~$K$ only contains the identity element and the elements $(1,2)(4,5),\ (1,3)(4,6)$ and $(2,3)(5,6)$, which means that either $\tau'=\tau$ or else $\tau'$ together with $\tau$ effect a column swap, viz.:
\begin{equation}\label{swop}
\sigma_1\tau'\tau^{-1}=k_1^{-1}k_2\sigma_1,
\end{equation}
which in turn implies that only this one specific `matching' transposition $\tau'$ can move us between classes where we already know there is a pair of matrices related by $\tau$.
However it is clearly NOT the case that given any element in the first class and any element in the second class, they will be related by a single transposition to one another. In summary therefore, 
when we say that two equivalence class representatives $M^\ast$ and $N^\ast$ are \emph{related by a single transposition $\tau$} we are referring to examples where there exist matrices $M,N\in\Mtt$ with $M\in\widehat{M^\ast},\ N\in\widehat{N^\ast}$ such that $M=N^\tau$. 
This means that our relations do NOT necessarily correspond to single transpositions between the class representatives of $\Rtt$.
To restrict to these would be completely artificial, depending as it does on our choice of representatives.
Hence in reading the lead-up to proposition~\ref{crikey} and theorem~\ref{cripes}, it must be borne in mind that the matrix $M$ can in principle be ANY matrix in $\Mtt$.
We set this out formally now.

\begin{definition}\label{dhv}
Let $M,N$ be any matrices in $\Mtt$ corresponding to elements $\sigma_M,\sigma_N$ respectively of $G$. If there is a transposition $\tau\in G$ such that $\sigma_M\tau = \sigma_N$ then we shall say that the matrix \emph{classes} $\widehat{M}$ and $\widehat{N}$ are \emph{related by the transposition $\tau$}.

We shall refer to a transposition as \emph{diagonal} if it swaps two elements which are neither in the same row nor in the same column as one another; \emph{vertical} if it swaps two elements of the same column: that is to say, the transposition only affects row sums; and \emph{horizontal} if it swaps two elements of the same row: in other words it only affects column sums.
\end{definition}

Let ${\cal{T}}$ be the set of pairs of distinct $\Rtt$ classes in which each representative from the first class is related to at least one representative from the second class by a single transposition.
There is a total of $|{\cal{T}}|=360$ different pairs (out of a possible $\frac{60^2-60}{2}=1770$): a result which we derive in a moment. Let $\cal{A}$ be the $60\times60$ (symmetric) adjacency matrix of the relations embodied in ${\cal{T}}$. By definition $\cal{A}$ will have 720 non-zero entries. Since transpositions generate $G$ it follows that each of the $2\times3$ matrices is eventually in some form the product of transpositions acting on a fiducial matrix (which we fix throughout to be $\left( \begin{array}{ccc}a & b & c \\d & e & f\end{array}\right)$: see appendix~\ref{frog}) and so  it is clear that the powers of $\cal{A}$ will eventually have non-zero entries everywhere, reflecting the fact that every matrix is related to every other by a finite chain of transpositions. In fact it is easy to check directly that ${\cal{A}}^3$ has no zero entries whereas ${\cal{A}}+{\cal{A}}^2$ has $720$ zeroes: hence $3$ is the maximal length of a chain of transpositions linking any two matrix classes in $\Rtt$. (For completeness we note that the equivalent figure if we were looking at all 720 matrices would be $5$ transpositions rather than $3$).
If in addition we restrict just to transpositions which fix a single point, say $a$ as we did in setting up our $\Rtt$ classes, then we need at most $4$ transpositions to navigate from any given matrix in $\Rtt$ form, to any other in that form.

It is possible to derive the number $|{\cal{T}}|$ as follows. On any of the $720$ matrices in $\Mtt$ we may act by any of $\binom{6}{2}=15$ distinct transpositions, giving a total of 10,800 relations at the level of $\Mtt$. From the explanation above it follows that each relation of the form~(\ref{tauer}) gives rise to at least 12 other single-transposition relations between the same two cosets (just multiply both sides of~(\ref{tauer}) on the left successively by elements of $K$) and so we may divide this by a factor of 12 immediately. However if two classes $\widehat{M},\widehat{N}\in\Rtt$ are related by a \it horizontal \rm transposition then in fact there will be (at least) $24$ relations between them. To see why this is so, consider the matrix $M=\left( \begin{array}{ccc}u & v & w\\x & y & z\end{array}\right)$ and without loss of generality assume that the horizontal transposition is $\tau=(1,2)$. So $M^\tau=\left( \begin{array}{ccc}v & u & w\\x & y & z\end{array}\right)$. But the class of $M^\tau$ also contains the matrix $N=\left( \begin{array}{ccc}u & v & w\\y & x & z\end{array}\right)=M^\sigma$ say, where $\sigma=(4,5)$: this is the same explanation as that of the column swaps in~(\ref{swop}) above. So because (at least) two transpositions are known to map the element $M$ of $\widehat{M}$ to an element of $\widehat{M^\tau}$, it follows from the discussion above and that regarding equation~(\ref{swop}) that there will be exactly $2\times12=24$ single-transposition relations between the classes. This behaviour cannot occur for diagonal or vertical transpositions, as is easily seen: it occurs in the $2\times n$ case for horizontal transpositions only because modulo row-swap-equivalence, both the top half and the bottom half each `tell the whole story' of the transposition. Now $6$ of the $15$ possible transpositions are horizontal (namely the right-action transpositions which would yield the same results as left action by $(1,2),(1,3),(2,3),(4,5),(4,6)$ and $(5,6)$ in each particular case), with the remaining 9 vertical or diagonal. So on $6/15$ of the relations we divide out by 24, and on the remaining $9/15$ we divide by 12. So we have $(10800*6/15)/24+(10800*9/15)/12=720$ ordered pairs. However we want unordered pairs so we divide this by 2, to obtain $|{\cal{T}}|=360$ as claimed.

In the next two sections we shall see that~255 of these~360 pairs do indeed satisfy an entropic binary relation~$\rhd$.

\subsection{Majorisation within $\Rtt$: necessary and sufficient conditions}

We now study the majorisation relations in more detail.
First we note necessary and sufficient conditions for majorisation (and hence $\rhd$, by proposition~\ref{majmcmaj}) between matrices related by a single non-diagonal transposition.
Note that we do NOT necessarily work here with matrices in the form in $\Rtt$.

Any vertical transposition $\tau=(\alpha,\beta),\ \alpha>\beta$ may be represented in the form $M = \left( \begin{array}{ccc}\alpha & x & y\\\beta & u & v\end{array}\right)$ being acted upon by switching the places of~$\alpha$ and~$\beta$. Furthermore there exists a matrix in the CMI-equivalence class of $M^\tau$ such that we may write $M^\tau = \left( \begin{array}{ccc}\alpha & u & v\\\beta & x & y\end{array}\right)$. Now since CMI is invariant in particular if we swap columns 2 and 3, it is evident (by possibly interchanging $M$ and $M^\tau$) that we may stipulate that $x>u,v,y$. So having chosen $\alpha,\beta$ our choice of $x$ is fixed. The remaining 3 letters will then have 6 possible orderings, of which exactly 4 satisfy either $u<y$ or $v<y$. Written in the above form it is evident that $M\succ M^\tau$ if and only if $x+y>u+v$ (recall that we are only interested in row sums here since the column sums are fixed, and note that $M^\tau\succ M$ is not possible since $x+y$ cannot be \it a priori \rm less than $u+v$). But $x$ is greater than both $u$ and $v$, hence \it a priori \rm majorisation will occur if and only if either $u<y$ or $v<y$. Hence for each of the $\binom{6}{2}=15$ choices of $\{\alpha,\beta\},\ \alpha>\beta$ there exist exactly 4 majorisation relations, giving a total of 60 arising from vertical transpositions.

Turning to the horizontal transpositions, they may be written in the form $M = \left( \begin{array}{ccc}\alpha & \beta & x\\y & u & v\end{array}\right)\mapsto \left( \begin{array}{ccc}\beta & \alpha & x\\ y & u & v\end{array}\right) = M^\tau$. Notice first of all that our calculations of CMI differentials will be independent of the rightmost column $\binom{x}{v}$ and so we may regard this as a majorisation comparison between vectors $(\alpha+y,\beta+u)$ and $(\alpha+u,\beta+y)$, which reduces to a contest between $u$ and $y$. We see that $M\succ M^\tau$ if and only if $y>u$ (note that this is the same relation as if we interchanged $\alpha$ with $y$ and $\beta$ with $u$ so to avoid counting twice we stipulate that $x>v$). 
Each of the $\binom{6}{2}=15$ possible pairs $\{\alpha,\beta\}$ with $\alpha>\beta$ gives us $\binom{4}{2}=6$ relations where both $x>v$ and $y>u$, yielding
a total of $90$ majorisation relations in total, arising from horizontal transpositions.

It is clear from the definitions that the respective sets of diagonal, vertical and horizontal majorisation relations are mutually exclusive. Moreover the property of being diagonal/vertical/horizontal is invariant under the equivalence relations used to construct the right cosets in $\Rtt$. Once we have theorem~\ref{cripes} below we shall have proven the following (the second part is easy to check using a program like SAGE).

\begin{proposition}\label{crikey}
There is a total of 165 distinct (strict) majorisation relations arising exclusively from transpositions between $\Rtt$ matrix classes. This comprises 15 from the diagonal transpositions, 60 from the vertical and 90 from the horizontal. By taking the transitive reduction of the directed graph on 60 nodes whose edges are the 165 majorisation relations just described, we find that 30 of them are redundant and so there are only 135 covering relations in this set. \qed
\end{proposition}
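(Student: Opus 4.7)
The plan is to assemble the count of 165 by combining the three separate tallies (diagonal, vertical, horizontal), verify that these tallies are indeed disjoint, and then reduce to the transitive reduction by a direct finite computation.

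First I would appeal to the discussion immediately preceding the proposition to justify the 60 vertical and 90 horizontal majorisation relations: the vertical count of 60 comes from the $\binom{6}{2}=15$ choices of the swap pair $\{\alpha,\beta\}$ with exactly 4 valid completions of $\{x,y,u,v\}$ satisfying $u<y$ or $v<y$, and the horizontal count of 90 comes from the same 15 choices of swap pair combined with $\binom{4}{2}=6$ orderings of the remaining four values satisfying both $x>v$ and $y>u$. For the diagonal count of 15 I would invoke (forward reference) the necessary and sufficient conditions for $\succ$ arising from a single transposition to be established in theorem~\ref{cripes}; the conditions force the two elements being swapped to be at specific positions determined by their values, and a short case analysis shows that for each of the $\binom{6}{2}=15$ diagonal pairs $\{\alpha,\beta\}$ there is exactly one class arrangement yielding a majorisation.

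Next I would argue disjointness of the three families. A relation $\widehat{M}\succ\widehat{N}$ arising from a single transposition~$\tau$ is characterised by which marginals change: horizontal $\tau$ changes column sums but not row sums, vertical $\tau$ changes row sums but not column sums, and diagonal $\tau$ changes both. Since these three behaviours are mutually exclusive, no class pair in $\cal{T}$ can be counted twice. (Recall from the discussion of the adjacency matrix $\cal{A}$ that different transpositions linking the same class pair differ only by row/column swap in $K$, so the diagonal/vertical/horizontal label is a well-defined invariant of the pair, not merely of the particular representative transposition.) Adding $15+60+90=165$ then gives the first assertion.

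For the second assertion (135 covering relations after transitive reduction) the cleanest route is computational: build the $60\times60$ directed graph whose edges are the 165 relations, compute its transitive reduction, and count the remaining edges. I would execute this in SAGE (as suggested by the author), using the lexicographic numbering of appendix~\ref{frog} as labels, and confirm that exactly 30 of the 165 edges are implied by a two-step path in the graph and hence are redundant covers. The main obstacle is really only the diagonal case, which is genuinely delicate because unlike the non-diagonal transpositions a diagonal swap affects both row and column sums simultaneously, so majorisation of both marginal vectors cannot be inferred from the simple sum comparisons used in the vertical and horizontal arguments; that is why the proof must defer to the finer analysis of theorem~\ref{cripes}. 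The transitive reduction step, by contrast, is a finite bookkeeping check once the 165 edges are in hand.
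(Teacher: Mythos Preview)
Your proposal is correct and matches the paper's approach essentially point-for-point: the paper likewise derives the vertical~60 and horizontal~90 in the discussion preceding the proposition, defers the diagonal~15 to theorem~\ref{cripes} (cf.\ corollary~\ref{yep}), notes that the diagonal/vertical/horizontal label is a coset invariant so the three families are disjoint, and finishes the transitive-reduction count by computer (SAGE). There is nothing to add.
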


\begin{figure}
\begin{center}
\mbox{
\subfigure{\includegraphics[width=7in,height=5in]{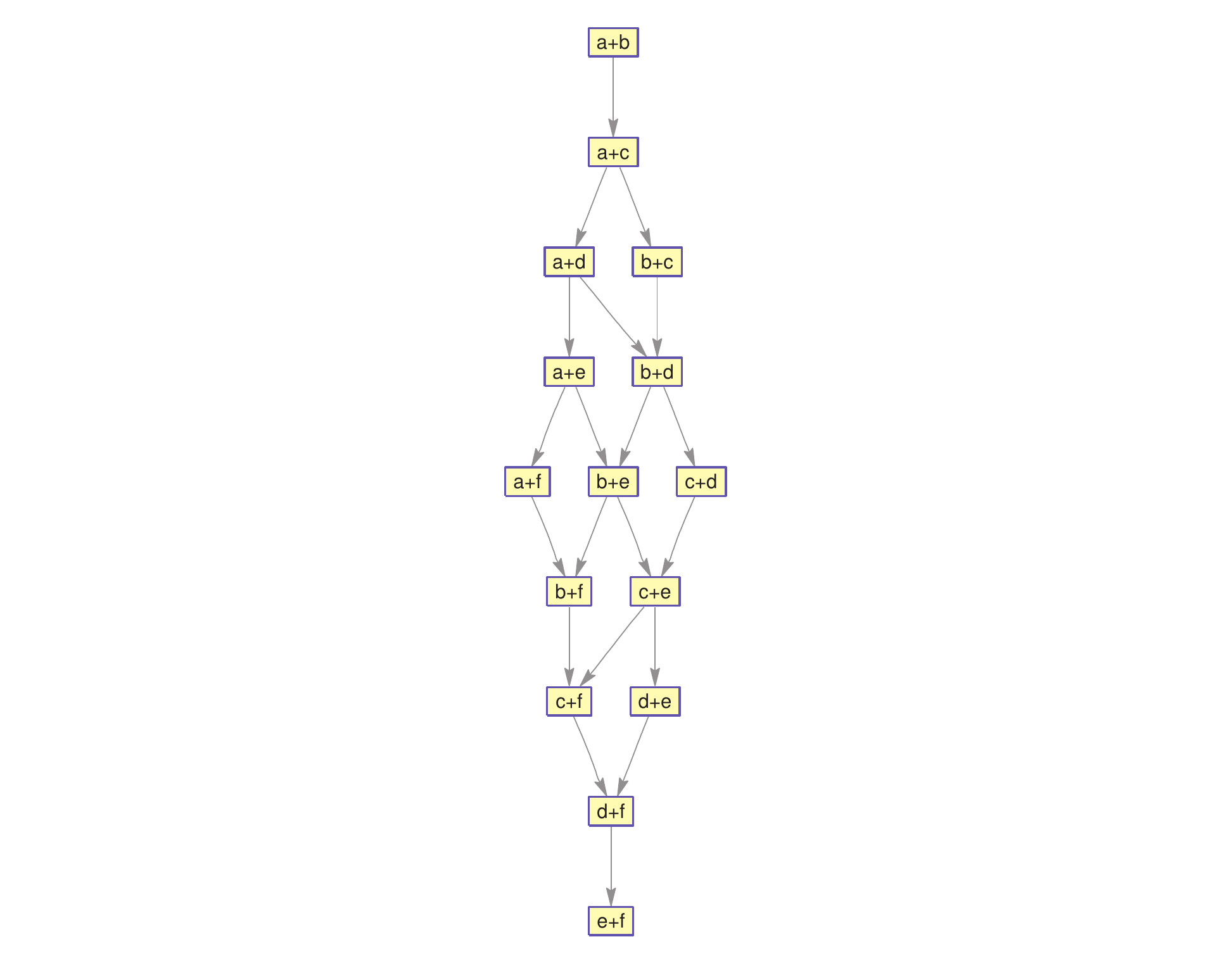}}
}
\caption{Graph GR2 of covering relations between column sum coordinates}\label{GR2}
\end{center}
\end{figure}

\begin{figure}
\begin{center}
\mbox{
\subfigure{\includegraphics[width=7in,height=5in]{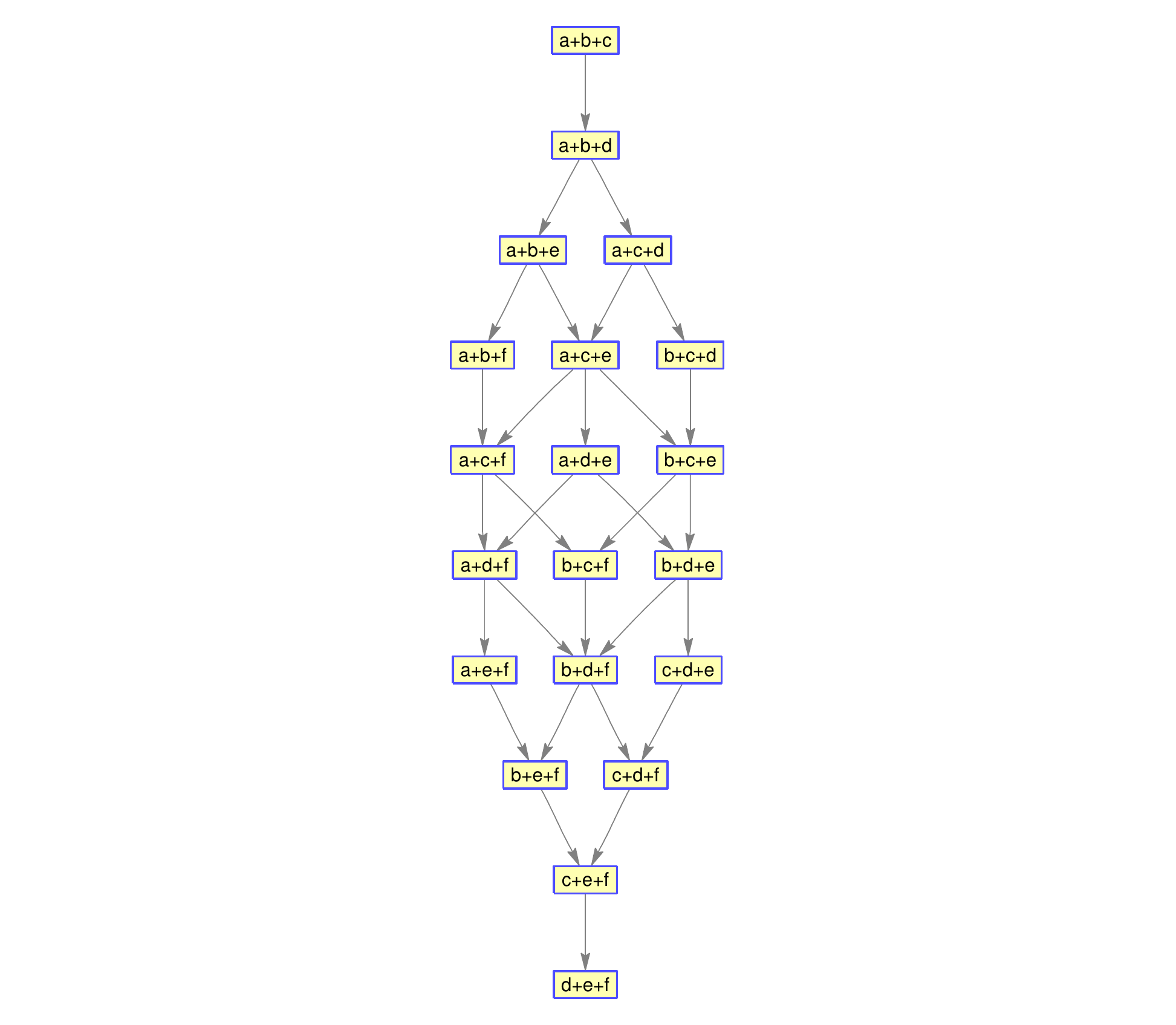}}
}
\caption{Graph GR3 of covering relations between row sum coordinates: note this is the Bruhat poset $S_6^{(3)}$ of figure 2.7 of~\cite{bjorner}}\label{GR3}
\end{center}
\end{figure}

\begin{figure}[h!btp]
\begin{center}
\mbox{
\subfigure{\includegraphics[width=4in,height=4in,keepaspectratio]{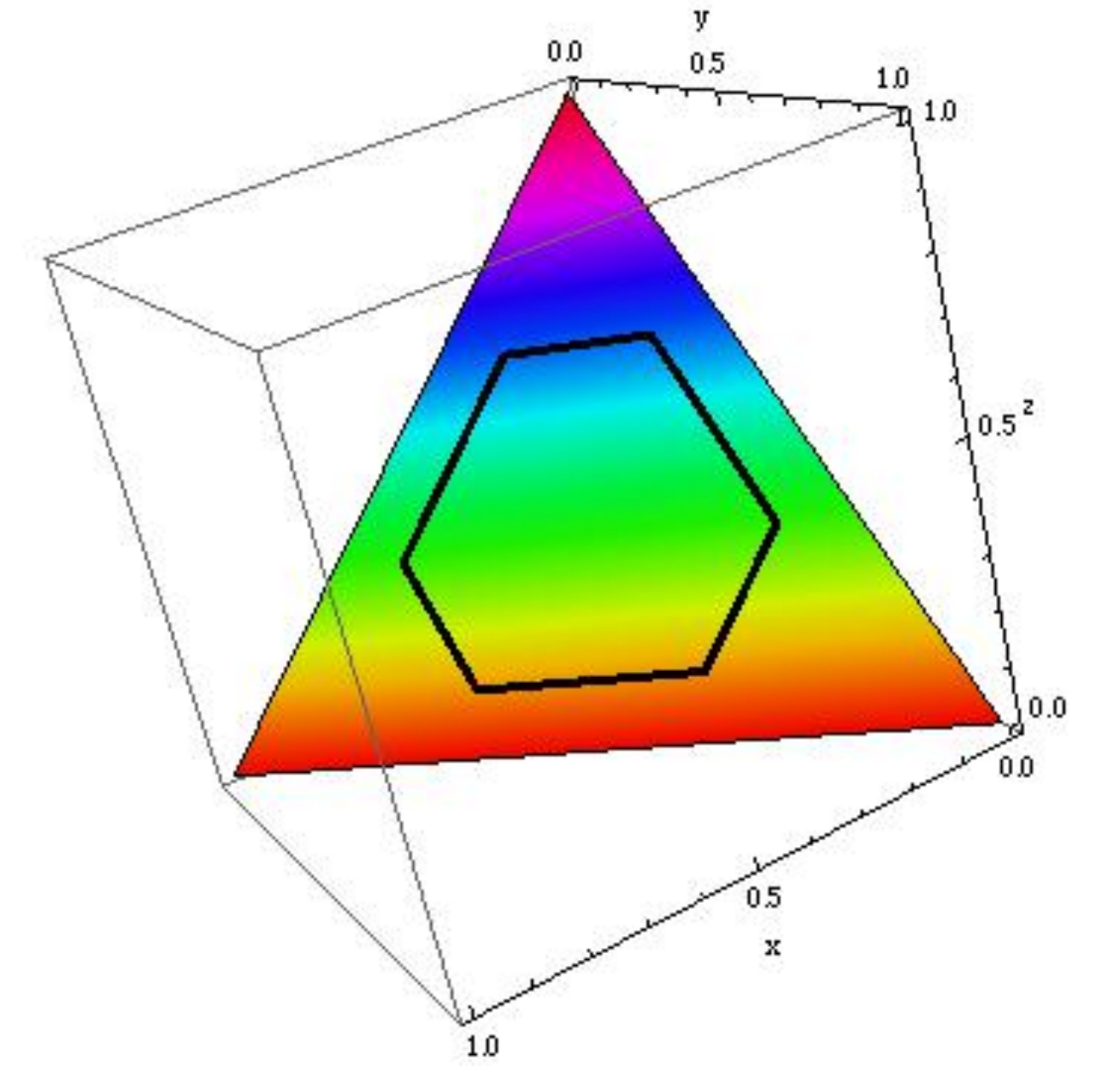}}
}
\caption{2-simplex $\Sigma_\mathbf{c}$ of possible ordered triples of values in GR2, with the hexagon $\mathbf{H}(M)$ of a matrix $M$ with $\mathbf{c}(M)=(0.6,0.3,0.1)$}
\label{herpes}
\end{center}
\end{figure}

The two figures GR2 and GR3 shown on pages~\pageref{GR2} and~\pageref{GR3} depict schematically all of the possible column sums (respectively row sums) formed from the 6 probabilities $a,b,c,d,e,f$ in the rows and columns of the matrices in $\Mtt$. It is apparent after a bit of thought that there is a 1-1 correspondence between matrices in $\Mtt$ and `compatible' pairs~$\{\mathbf{c}(M),\ \mathbf{r}(M)\}$ where~$\mathbf{c}(M)$ is a vector of~3 mutually exclusive entries from~GR2 (the column sums), and~$\mathbf{r}(M)$ is a vector of~2 mutually exclusive entries from~GR3 (the row sums), and where we mean by `compatible' that the chosen column sums \it can \rm coexist in a matrix with the chosen row sums. Moreover two matrices $M,N$ are in the same class in $\Rtt$ if and only if there are permutations $\sigma\in\mathbf{S_3},\ \tau\in\mathbf{S_2}$ such that $\mathbf{c}(M)=\mathbf{c}(N)^{\sigma}$ and $\mathbf{r}(M)=\mathbf{r}(N)^{\tau}$, where we view the actions of the groups as usual as simply permuting the coordinates of the vectors.

The arrows in~GR2 and~GR3 all indicate a `covering' relation between sums of probabilities: in other words if there is an arrow from a quantity~$X$ to a quantity~$Y$ then~$X>Y$ for \emph{every} matrix in~$\fdM_6$ and there is no quantity~$Z$ (within the possible column or row sums respectively) such that~$X>Z>Y$ for \emph{every} possible choice of matrix in~$\fdM_6$.

\begin{proposition}\label{mipelice}
Let $M,N\in\Mtt$. Then 
$M$ majorises $N$ 
if and only if:

(i) each one of the coordinates of $\mathbf{r}(N)$ lies on a (directed) path in GR3 joining some pair of coordinates of~$\mathbf{r}(M)$; \emph{and}

(ii) each one of the coordinates of $\mathbf{c}(N)$ lies on a (directed) path in GR2 joining some pair of coordinates of~$\mathbf{c}(M)$.
\end{proposition}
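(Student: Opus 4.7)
My plan is to reduce the proposition to two independent statements — one about the row-sum 2-vectors (clause (i), GR3) and one about the column-sum 3-vectors (clause (ii), GR2) — since by Definition~\ref{critta} the matrix relation $M\succ N$ is exactly the conjunction $\mathbf{r}(M)\succ\mathbf{r}(N)$ and $\mathbf{c}(M)\succ\mathbf{c}(N)$. The first step in either case is to unpack the path hypothesis: by construction GR2 and GR3 are the Hasse diagrams of the partial orders on admissible column-sums (respectively row-sums) given by the uniform relation ``$x\geq y$ for every $P\in\fdM_6$''. Hence a directed path in GR$k$ from $x$ to $y$ exists iff $x\geq y$ uniformly on~$\fdM_6$, and a vertex $z$ lies on such a path iff $x\geq z\geq y$ uniformly. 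The proposition therefore asserts that uniform majorisation of the marginal vectors is equivalent to the statement that each coordinate of the smaller vector can be \emph{uniformly sandwiched} between two coordinates of the larger one.

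The row case (i) is then almost immediate. Majorisation of two 2-vectors with common sum~1 collapses to the single inequality $\max\mathbf{r}(M)\geq\max\mathbf{r}(N)$; asking this to hold uniformly in $a,\ldots,f$ is equivalent to requiring that both coordinates of $\mathbf{r}(N)$ lie in the closed GR3-interval determined by the two coordinates of $\mathbf{r}(M)$, which is exactly the path condition (there being only one pair of coordinates to serve as path endpoints). If the two row-sums of~$M$ happen to be GR3-incomparable then no directed path between them exists, and a short two-parameter witness shows that uniform majorisation then forces $\mathbf{r}(N)$ to be a permutation of $\mathbf{r}(M)$, consistently with the (degenerate) path condition.

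For the column case (ii), I first record that majorisation of two 3-vectors with equal sum is equivalent to $\max\geq\max$ \emph{and} $\min\leq\min$ (the middle-partial-sum condition following from the two extremes once the total is fixed). The ``sandwich $\Rightarrow$ majorisation'' direction is then immediate: the uniform upper (resp.\ lower) sandwich of the maximal (resp.\ minimal) coordinate of $\mathbf{c}(N)$ at once yields $\max\mathbf{c}(M)\geq\max\mathbf{c}(N)$ and $\min\mathbf{c}(M)\leq\min\mathbf{c}(N)$ uniformly. The converse is the main obstacle: uniform majorisation gives the \emph{pointwise} statement that for each coordinate $y$ of $\mathbf{c}(N)$ some coordinate of $\mathbf{c}(M)$ dominates $y$ (and dually), whereas the sandwich requires a single coordinate of $\mathbf{c}(M)$ to do so \emph{uniformly}. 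I would resolve this lifting by contrapositive: if no single $x_i\in\mathbf{c}(M)$ GR2-dominates $y$, then each $x_i$ comes with a probability vector witnessing $x_i<y$, and using that the $x_i$ are pair-sums partitioning $\{a,\ldots,f\}$ together with continuity in the parameters and compactness of $\fd_6$, I would assemble a single vector on which $\max_i x_i<y$, contradicting uniform majorisation (the analogous construction handles the $\min$ side). The key combinatorial input is the explicit finite structure of GR2, which reduces this lifting to a finite case-check; once in hand, the ``majorisation $\Rightarrow$ sandwich'' direction of~(ii) — and hence the whole proposition — follows.
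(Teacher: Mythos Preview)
Your approach is essentially correct and genuinely different from the paper's. The paper invokes the convex-hull characterisation of majorisation (\cite[4.C.1]{marshall}): it realises $\mathbf{c}(M)\succ\mathbf{c}(N)$ at a single $P$ as membership of $\mathbf{c}(N)$ in a hexagon $\mathbf{H}(M)\subset\Sigma_{\mathbf c}$ (and $\mathbf{r}(N)$ in a segment $\mathbf{L}(M)\subset\Sigma_{\mathbf r}$), then asserts that uniform membership over all $P\in\fd_6$ is equivalent to the GR2/GR3 path condition ``from the definitions''. You instead work directly with the $\max/\min$ characterisation of majorisation for 2- and 3-vectors of fixed sum and recast the path condition as a uniform sandwich; this is more elementary (no convex-geometry input) and makes the logical structure clearer. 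Both routes meet the same genuine obstacle, which you correctly isolate: passing from ``for each $P$ some $x_i\geq y$'' to ``some single $x_i$ satisfies $x_i\geq_{\mathrm{GR2}}y$''. The paper's proof glosses over exactly this lifting, so your explicit flagging of it is a virtue.

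One caution on your sketch of the lifting: the phrase ``continuity and compactness of $\fd_6$ \ldots\ assemble a single vector on which $\max_i x_i<y$'' is misleading as stated. Having three separate witnesses $P_i$ with $x_i(P_i)<y(P_i)$ does \emph{not}, by compactness alone, produce a common witness --- the regions $\{x_i<y\}$ need not intersect. What actually carries the argument is precisely the partition structure you mention: writing $y=p+q$ with $p>q$, the pair containing $p$ fails to GR2-dominate $y$ only if its other letter is $<q$, the pair containing the largest letter $a$ fails only if its partner is $<q$, and so on; a short case analysis then lets you \emph{construct} a single $P$ (e.g.\ cluster the letters $\geq p$ near one value and push the small letters toward $0$) on which all three $x_i$ fall below $y$. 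So keep the ``finite case-check via the partition'' as the load-bearing step and drop the appeal to compactness, and your argument is complete --- and arguably more transparent on this point than the paper's.
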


\begin{proof}
Let $\vec{x},\vec{y}\in\RR^n$. By a well-known result on majorisation~\cite[4.C.1]{marshall} we know that $\vec{x}\succ\vec{y}\ $~if and only if $\vec{y}$ lies in the convex hull of the $n!$ points formed by all of the permutations of the coordinates of $\vec{x}$. In our situation the column sum vectors are all elements of the 2-simplex~$\Sigma_\mathbf{c}=\{(x,y,z)\in\RR^3:\ x+y+z=1;\ x,y,z\geq0\}$, whose vertices are the units on the axes $(1,0,0),(0,1,0)$ and $(0,0,1)$. Similarly the row sum vectors lie inside a 1-simplex $\Sigma_\mathbf{r}=\{(u,v)\in\RR^2:\ u+v=1;\ u,v\geq0\}$ with endpoints~$(1,0)$ and~$(0,1)$.

Each matrix $M\in\Mtt$ gives us a column sum vector $\mathbf{c}(M)$ which in turn gives (via the permutations of its coordinates under the action of $\mathbf{S}_3$) a suite of six points whose convex hull is a closed, irregular, possibly degenerate hexagon $\mathbf{H}(M)$ lying entirely inside the closed simplex $\Sigma_\mathbf{c}$ (see figure~\ref{herpes}), whose individual coordinates are all nodes of~GR2. Similarly $M$ gives us a row sum vector $\mathbf{r}(M)$ whose convex hull is (under the action of $\mathbf{S}_2$) the line segment $\mathbf{L}(M)\subseteq\Sigma_\mathbf{r}$ and whose endpoints are $\mathbf{r}(M)$ and $\mathbf{r}(M)^\ast$, the image of $\mathbf{r}(M)$ under transposing the $u,v$ coordinates: themselves nodes of~GR3. 
Now let $N$ be any other matrix in~$\Mtt$. By definition~\ref{critta}, the hypothesis that $M\succ N$ is the same as saying that $\mathbf{r}(M)\succ\mathbf{r}(N)$ and $\mathbf{c}(M)\succ\mathbf{c}(N)$ which from above is equivalent to saying that $\mathbf{c}(N)\in\mathbf{H}(M)$ and that $\mathbf{r}(N)\in\mathbf{L}(M)$. We remark that each vector in~$\fd_6$ will give us a different hexagon and a different line segment for this same~$M$, hence the point is to show that these statements are true for every choice of vector.

So what we need to show is that $\mathbf{r}(N)\in\mathbf{L}(M)$ if and only if each one of the coordinates of $\mathbf{r}(N)$ lies on a (directed) path in GR3 joining some pair of coordinates of~$\mathbf{r}(M)$, and that $\mathbf{c}(N)\in\mathbf{H}(M)$ if and only if each one of the coordinates of $\mathbf{c}(N)$ lies on a (directed) path in GR2 joining some pair of coordinates of~$\mathbf{c}(M)$. But the arrows in~GR2 and~GR3 represent order relations between real numbers which hold for all choices of vector in~$\fd_6$. So the result follows from the definitions of~$\mathbf{L}(M)$ and~$\mathbf{H}(M)$.\end{proof}

\begin{remark}
It is clear from the foregoing that~$\left[\mathbf{H}(N)\subseteq\mathbf{H}(M){\rm\ and\ }\mathbf{L}(N)\subseteq\mathbf{L}(M)\right]$ if and only if $M\succ N$.
\end{remark}

\newpage
\subsubsection{The case of more than one transposition}

\begin{corollary}\label{cannotprove}
Let $M,N\in\Mtt$. Suppose that $M\succ N$ but that each element of $\widehat{M}$ is separated from every element of $\widehat{N}$ by a product of at least $n\geq2$ transpositions. Then with just \bf two\it\ exceptions, there is an intermediate matrix class $\widehat{L}$ separated from $\widehat{M}$ by a single transposition and from $\widehat{N}$ by $(n-1)$ transpositions, such that $M\succ L\succ N$.

The exceptions are $(34,47)$ and $(46,47)$, namely:

$$\left( \begin{array}{ccc}a & c & e\\d & f & b\end{array}\right)\succ\left( \begin{array}{ccc}a & d & e\\f & b & c\end{array}\right){\rm\ and\ }\left( \begin{array}{ccc}a & d & e\\c & f & b\end{array}\right)\succ\left( \begin{array}{ccc}a & d & e\\f & b & c\end{array}\right).$$

Both of these `exceptional' covering relations factorise once the finer relation $\rhd$ is introduced: that is to say they are no longer covering relations in $\epo$. The factorisation paths are as follows:
$$\mathbf{34}\succ\mathbf{53}\rhd\mathbf{47}{\rm\ and\ }\mathbf{46}\rhd\mathbf{34}\succ\mathbf{53}\rhd\mathbf{47}.$$
\end{corollary}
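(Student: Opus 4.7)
The plan is to reduce the corollary to a finite case analysis using Proposition \ref{mipelice}, then to establish the two exceptional factorisations via the single-transposition tools developed earlier. Since Proposition \ref{crikey} shows there are only 165 majorisation relations among the 60 classes of $\Rtt$, and since every pair of classes is joined by a path of at most three transpositions (from the observation that $\cal{A}^3$ has no zero entries), only a small finite list of candidate pairs needs to be examined.

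First I would enumerate all majorisation pairs $(\widehat{M},\widehat{N})$ separated by $n\geq 2$ transpositions. For each such pair, Proposition \ref{mipelice} tells us that the row- and column-sum structure of $N$ nests inside that of $M$ along directed paths in GR3 and GR2 respectively. The natural candidate intermediate $\widehat{L}$ is obtained by applying a single transposition to a representative of $\widehat{M}$ that shortens exactly one of these paths by one step, provided the resulting row/column-sum data still bounds that of $N$. This is a mechanical check which succeeds in every pair bar the two exceptions claimed; since the adjacency structure is completely determined, the verification can be done by hand or checked in SAGE.

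The main obstacle is demonstrating that the two pairs $(\mathbf{34},\mathbf{47})$ and $(\mathbf{46},\mathbf{47})$ really do fail the greedy construction. Here I would enumerate all single-transposition neighbours of $\widehat{\mathbf{34}}$ and of $\widehat{\mathbf{46}}$, apply Proposition \ref{mipelice} to each as a candidate $\widehat{L}$, and confirm that no proper intermediate class satisfies both $\widehat{M}\succ\widehat{L}$ and $\widehat{L}\succ\widehat{\mathbf{47}}$. Equivalently, no directed path of majorisation-covering relations of length two links $\mathbf{34}$ (respectively $\mathbf{46}$) to $\mathbf{47}$.

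Finally, to exhibit the factorisations $\mathbf{34}\succ\mathbf{53}\rhd\mathbf{47}$ and $\mathbf{46}\rhd\mathbf{34}\succ\mathbf{53}\rhd\mathbf{47}$, I would verify each single-transposition step individually. The step $\mathbf{34}\succ\mathbf{53}$ is an ordinary single-transposition majorisation already listed in Proposition \ref{crikey}. For the genuinely entropic steps $\mathbf{53}\rhd\mathbf{47}$ and $\mathbf{46}\rhd\mathbf{34}$, I would invoke Proposition \ref{titrate}: write out the representatives together with the transposition $\tau$ relating them, compute the four quantities in (\ref{titanic}), check that the minimum lies in $\{r_\beta,c_\beta\}$, and verify the inequality $r_\beta+c_\beta\leq r_\alpha^\tau+c_\alpha^\tau$ for all $P\in\fdM_{6}$ using the ordering $a\geq b\geq c\geq d\geq e\geq f$. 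Concatenating these single-step relations yields the two stated factorisation paths in $\epo$ and shows that neither of the exceptional relations is a covering relation in the refined partial order.
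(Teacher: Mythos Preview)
Your approach is correct in spirit and would succeed---both you and the paper reduce the corollary to a finite computer-verifiable check---but the paper organises the computation rather differently, and there are a couple of small slips in your outline worth noting.

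The paper does not search pair-by-pair for intermediates. Instead it builds the adjacency matrices $\mathbf{M_2},\mathbf{M_3}$ of the covering relations in GR2 and GR3, forms the nilpotent sums $\overline{\mathbf{M}}_2=\sum_p\mathbf{M_2}^p$ and $\overline{\mathbf{M}}_3=\sum_p\mathbf{M_3}^p$, and uses Proposition~\ref{mipelice} to test \emph{every} ordered pair of classes for majorisation directly. This yields a $60\times60$ matrix with $423$ entries whose transitive reduction $\mathbf{T}$ has $134$ edges. Separately it takes the $165$ single-transposition majorisation relations of Proposition~\ref{crikey}, passes to their transitive closure ($421$ entries) and then reduction $\mathbf{T}'$ ($135$ edges). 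The difference $\mathbf{T}-\mathbf{T}'$ has exactly five nonzero entries, and these are precisely the changes forced by inserting the two exceptional covering relations $(\mathbf{34},\mathbf{47})$ and $(\mathbf{46},\mathbf{47})$ into $\mathbf{T}'$. This global comparison is cleaner than your local greedy search: it simultaneously certifies that every other multi-transposition majorisation \emph{does} factor and that these two do not, without having to argue separately that the greedy step always finds an intermediate when one exists.

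Two small corrections. First, the $165$ relations of Proposition~\ref{crikey} are only the \emph{single-transposition} majorisation relations, not all majorisation relations; the full count is $423$, and it is exactly the gap between $423$ and the $421$ generated transitively from the $165$ that produces the two exceptions. Second, for the entropic steps $\mathbf{53}\rhd\mathbf{47}$ and $\mathbf{46}\rhd\mathbf{34}$ you would more naturally invoke Theorem~\ref{cripes} (the Type~A criterion $v>y$) rather than Proposition~\ref{titrate}; and if you do use Proposition~\ref{titrate}, note that the relevant hypothesis for this direction is that the minimum of~(\ref{titanic}) lie in $\{r_\alpha^\tau,c_\alpha^\tau\}$, not in $\{r_\beta,c_\beta\}$ as you wrote.
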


\begin{proof}
(The matrices referred to in this proof are reproduced in appendix~\ref{mxs}).

Construct (by hand, or in a simple computer program) two matrices $\mathbf{M_2}$ and $\mathbf{M_3}$ representing the transitive reductions of the partial orders in GR2 and GR3. Since GR2 has 15 nodes and 20 directed edges and GR3 has 20 nodes and 30 directed edges we obtain a $15\times15$-matrix with 20 non-zero entries for $\mathbf{M_2}$, and a $20\times20$-matrix with 30 non-zero entries for $\mathbf{M_3}$. In order to simplify things for a moment, let us speak only of column sums. Recall by proposition~\ref{mipelice} that all possible (column sum) majorisation relations for matrices $M,N$ will show up as each coordinate of $\mathbf{c}(N)=(n_1,n_2,n_3)$ lying on some directed path between two coordinates of $\mathbf{c}(M)=(m_1,m_2,m_3)$. But this is the same as saying that for each $j=1,2,3$, there exist distinct $k,l\in\{1,2,3\}$ such that some power $\mathbf{M_2}^p$ of the matrix $\mathbf{M_2}$ contains a non-zero entry at $(m_k,n_j)$ and another power $\mathbf{M_2}^q$ contains a non-zero entry at $(n_j,m_l)$. So if we form the sum (in reality a finite sum since $\mathbf{M_2}$ is nilpotent; but note that we need the identity matrix since the quantities are $\geq$ themselves):
$$\mathbf{\overline{M}_2}=\sum_{p=0}^{\infty}\mathbf{M_2}^p$$
we need only check the respective entries $(m_k,n_j)$ and $(n_j,m_l)$ of $\mathbf{\overline{M}_2}$ for $j=1,2,3$ to find whether such $k,l$ exist; if so then $\mathbf{c}(M)\succ\mathbf{c}(N)$. Similarly we form
$$\mathbf{\overline{M}_3}=\sum_{p=0}^{\infty}\mathbf{M_3}^p$$
and perform an identical procedure (with only two entries of course this time) to check for row sum majorisation. If we find non-zero entries for row and column sums in all $5=2+3$ cases then we must have $M\succ N$.

If we now look at the adjacency matrix afforded by this procedure (where we put a 1 in position (i,j) iff matrix $i$ is found to majorise matrix $j$ under this test) then we produce a $60\times60$-matrix with 423 non-zero entries. Its transitive reduction $\mathbf{T}$ has 134 non-zero entries.

If on the other hand we generate the $60\times60$ adjacency matrix of the directed graph produced by the methods of proposition~\ref{crikey} (that is to say, only using single transpositions) and take its powers we find a matrix with 421 non-zero entries, with a transitive reduction $\mathbf{T'}$ computed by SAGE to contain 135 entries.

Now if we subtract the second of these two matrices from the first we find that $\mathbf{U}=\mathbf{T}-\mathbf{T'}$ has just 5 non-zero entries as follows (recall that we use the lexicographic ordering on the $\Rtt$ matrix classes to index these adjacency matrices):
$$\mathbf{U}_{34,47}=+1;\ \mathbf{U}_{44,47}=-1;\ \mathbf{U}_{45,47}=-1;\ \mathbf{U}_{46,47}=1;\ \mathbf{U}_{46,48}=-1,$$
which is precisely what is expected if we introduce the two exceptional relations mentioned in the statement of the corollary, into the relations in $\mathbf{T'}$. (See appendix~\ref{stasi}).
\end{proof}

So all but two complicated majorisation relations will decompose into smaller majorisation relations arising from single transpositions. Indeed modulo~$\rhd$ proposition~\ref{crikey} tells the whole story of majorisation as promised in the outline of the proof of theorem~\ref{analalg}. One might hope that such a benign situation would also be the case for the relation~$\rhd$ in this~$2\times 3$ case: and indeed, there are again very few exceptions (we can prove that there are at least two, and possibly up to five). In order to establish the structure of the poset~$\epo$, we need to establish necessary and sufficient conditions for the occurrence of a relation~$\rhd$ between matrix equivalence classes which are related by a single transposition, and then as we have just done with majorisation, establish which are the exceptions.

\subsection{The entropic relation $\rhd$ in $\Rtt$: necessary and sufficient conditions}

We are able to obtain quite a dense partial ordering of the $60$ matrix classes in $\Rtt$ on the basis of the entropic partial order relation $\rhd$. Indeed almost one half of the possible pairs of distinct matrix classes are (conjecturally in the case of~4 pairs - see theorem~\ref{proveit}) related to one another: we obtain 830 relations out of a possible $\binom{60}{2}=1770$. The transitive reduction of these 830 yields 186 covering relations, as we shall show below. In the last section we found necessary and sufficient conditions for the majority of these relations which arise through `horizontal' and `vertical' transpositions and the consequent \emph{majorisation} which occurs. As per proposition~\ref{majmcmaj}, the notions of majorisation and the entropic partial order relation~$\rhd$ are the same thing in these cases: so only the `diagonal' transpositions remain to be studied.

Here we develop necessary and sufficient conditions for the relation $\rhd$ to obtain in the case of a single \emph{diagonal} transposition. Given any probability matrix in the form $M^\sigma = \left( \begin{array}{ccc}\alpha & x & y\\u & \beta & v\end{array}\right)$, for some $M\in\fdM_6$, a diagonal transposition $\tau=(\alpha,\beta),\ \alpha>\beta$ takes this to $M^{\tau\sigma} = \left( \begin{array}{ccc}\beta & x & y\\u & \alpha & v\end{array}\right)$ representing a class of CMI-invariant matrices of which one is $\left( \begin{array}{ccc}\alpha & u & v\\x & \beta & y\end{array}\right)$. Now by possibly interchanging the classes of $M^\sigma$ and $M^{\tau\sigma}$ it is clear that we may require that $x>u$. Since we are examining only binary relations between pairs of matrices we are able to require that the pairs be ordered like this for the purposes of checking whether $\sigma\rhd\sigma\tau$ or $\sigma\tau\rhd\sigma$. (Note once again that we do \bf NOT \rm assume that $x>y$ here, nor that $\alpha=a$ as we are not in general working with matrices in the form in $\Rtt$).

\begin{theorem}\label{cripes}
Let $M^\sigma = \left( \begin{array}{ccc}\alpha & x & y\\u & \beta & v\end{array}\right)$ be a matrix as above with $\alpha>\beta$ and $x>u$. For $\tau=(\alpha,\beta)$:

\rm\bf Type A:\ \ \ \ \ \it$\sigma\tau\rhd \sigma\ \Leftrightarrow\ v>y.$
\vspace{2mm}

Moreover we have a stronger relation as a sub-class of this (`type A majorisation')
$$\sigma\tau\succ \sigma\ \Leftrightarrow\ v>x>u>y\ .$$

Conversely,

\vspace{2mm}

\rm\bf Type B:\ \ \ \ \ \it
$\sigma\rhd \sigma\tau\ \Leftrightarrow\ y>x>u>v.$
\end{theorem}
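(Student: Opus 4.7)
The plan is to make the sign of the quantity in~(\ref{wing}) explicit under the hypothesis of a single diagonal transposition. First I compute the four ``base'' arguments that appear inside the identric means in~(\ref{titanic}): $r_\alpha^\tau = x+y+\beta$, $c_\alpha^\tau = u+\beta$, $r_\beta = u+v+\beta$, $c_\beta = x+\beta$, all shifted uniformly upward by $t=\alpha-\beta$. The hypothesis $x>u$ forces $c_\alpha^\tau$ to be strictly smaller than each of the other three and $r_\alpha^\tau$ to be strictly larger than each of the other three; so in the notation of Lemma~\ref{lollipop}(vii) we are always in the situation $p=c_\alpha^\tau$, $s=r_\alpha^\tau$, $\{q,r\}=\{r_\beta,c_\beta\}$, regardless of which of $r_\beta,c_\beta$ is larger. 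Two algebraic identities then do almost all of the work: the sum gap $(r_\beta+c_\beta)-(r_\alpha^\tau+c_\alpha^\tau)=v-y$ and the product gap $r_\beta c_\beta - r_\alpha^\tau c_\alpha^\tau = (x+\beta)v-(u+\beta)y$.

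For the sufficient direction of Type A, suppose $v>y$. The minimum of the four base arguments lies in $\{r_\alpha^\tau,c_\alpha^\tau\}$ (namely $c_\alpha^\tau$) and the sum gap is strictly positive, so the converse half of Proposition~\ref{titrate} immediately delivers $\sigma\tau\rhd\sigma$. The sub-claim that $v>x>u>y$ upgrades this to genuine majorisation I verify directly against Definition~\ref{critta}: both $\mathbf{r}(M^{\sigma\tau})\succ\mathbf{r}(M^\sigma)$ and $\mathbf{c}(M^{\sigma\tau})\succ\mathbf{c}(M^\sigma)$ reduce to elementary comparisons that are precisely the content of the chain $v>x>u>y$.

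For the sufficient direction of Type B, suppose $y>x>u>v$. Then $v/u<1<y/x$, so $vx<uy$; combined with $v<y$ this makes $(x+\beta)v-(u+\beta)y=(vx-uy)+(v-y)\beta<0$ for every admissible $\beta$. Hence $qr<ps$ and $q+r<p+s$ strictly and uniformly in $P$, so the sign of~(\ref{bagosh}) is strictly positive for every $\lambda\geq 0$. Running the argument in the proof of Lemma~\ref{lollipop}(vii) forward, $\chi_\lambda$ is strictly increasing in $\lambda$ and so $\chi_0/\chi_t<1$; the identric-mean ratio is therefore strictly less than $1$, the numerator of~(\ref{wing}) strictly exceeds its denominator, and $\sigma\rhd\sigma\tau$ follows.

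The main obstacle will be the necessary directions, for which the plan is to combine the strict form of Lemma~\ref{lollipop}(vii) with an explicit counterexample construction. If $\sigma\tau\rhd\sigma$ then the identric-mean ratio is $\geq 1$ for every $P$, so Lemma~\ref{lollipop}(vii) forces $(x+\beta)v\geq(u+\beta)y$ to hold for every admissible choice of probabilities. The task will be to show that if $v\leq y$ then one can always pick admissible probabilities --- respecting both the positional constraints $\alpha>\beta$, $x>u$ and the global ordering $a\geq b\geq\ldots\geq f\geq 0$ --- so as to violate this product inequality, e.g.\ by compressing the pair containing $\{v,x\}$ toward the lower end of the ordered sextuple while stretching $\{u,y\}$ toward the upper end. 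A symmetric argument delivers the necessary direction of Type B: the three strict inequalities $y>x$, $x>u$, $u>v$ each rule out a separate family of would-be counterexamples to $\sigma\rhd\sigma\tau$, and the delicate part of the entire proof will be finding a uniform construction of these counterexample probability vectors that handles the case analysis cleanly.
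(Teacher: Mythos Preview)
Your approach is essentially the paper's own proof, and the sufficient directions of Type~A and Type~B are handled correctly. There is one factual slip and one genuine gap to flag.

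\textbf{The slip.} You assert that $x>u$ alone forces $r_\alpha^\tau=x+y+\beta$ to be the strict maximum of the four base arguments. This is false in general: whenever $u+v>x+y$ (for instance in your own Type~A majorisation regime $v>x>u>y$) one has $r_\beta>r_\alpha^\tau$. Fortunately this does not actually damage your argument, because you only invoke the Lemma~\ref{lollipop}(vii) framework under the additional hypothesis $y\geq v$ (both in the Type~B sufficiency step and in the Type~A necessity step), and under that hypothesis $x+y>u+v$ does hold, so $s=r_\alpha^\tau$ is legitimate there. The paper makes precisely this observation: it claims $r_\beta<r_\alpha^\tau$ only after first assuming $y>v$.

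\textbf{The gap.} Your necessity arguments are at present only a plan. The paper carries them out by the case analysis you anticipate: it lists the twelve possible orderings of $u,v,x,y$ subject to $x>u$, notes that six of them have $v>y$ (hence Type~A by the sufficient direction), and for each of the remaining six with $y>v$ it constructs an explicit degeneration of the probability vector---setting $u=x$ in orderings I, II, III, VIII, and setting $y=x$, $u=v$ in orderings IV, V---that strictly violates the product inequality $(x+\beta)v\geq(u+\beta)y$ unless one is in case~II, namely $y>x>u>v$. This single case analysis simultaneously yields both Type~A necessity and Type~B necessity (and also shows $y>v$ is necessary for Type~B). Your sketch of ``compressing $\{v,x\}$ toward the lower end while stretching $\{u,y\}$ toward the upper end'' is the right intuition, but it will need this kind of case-by-case degeneration to become a proof, because the ambient ordering $a\geq b\geq\cdots\geq f$ constrains which coordinates you are free to collapse together.
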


\begin{proof}
It is convenient to divide the single-transposition entropic cases into two types as we have done in the statement of the theorem, which we shall henceforth refer to as type~A and type~B. Type~A is where in the above notation we are able to say that~$\mu_H(r_\alpha^\tau,r_\alpha)\mu_H(c_\alpha^\tau,c_\alpha)<\mu_H(r_\beta,r_\beta^\tau)\mu_H(c_\beta,c_\beta^\tau)$ for all matrices~$M\in\fdM_6$, which is the same as saying that~$I(M^\sigma)>I(M^{\tau\sigma})$ for all~$M\in\fdM_6$: that is, that~$\sigma\tau\rhd\sigma$. Type~B is exactly the opposite set of inequalities.

\begin{figure}[h!btp]
\begin{center}
\mbox{
\subfigure{\includegraphics[width=4in,height=4in,keepaspectratio]{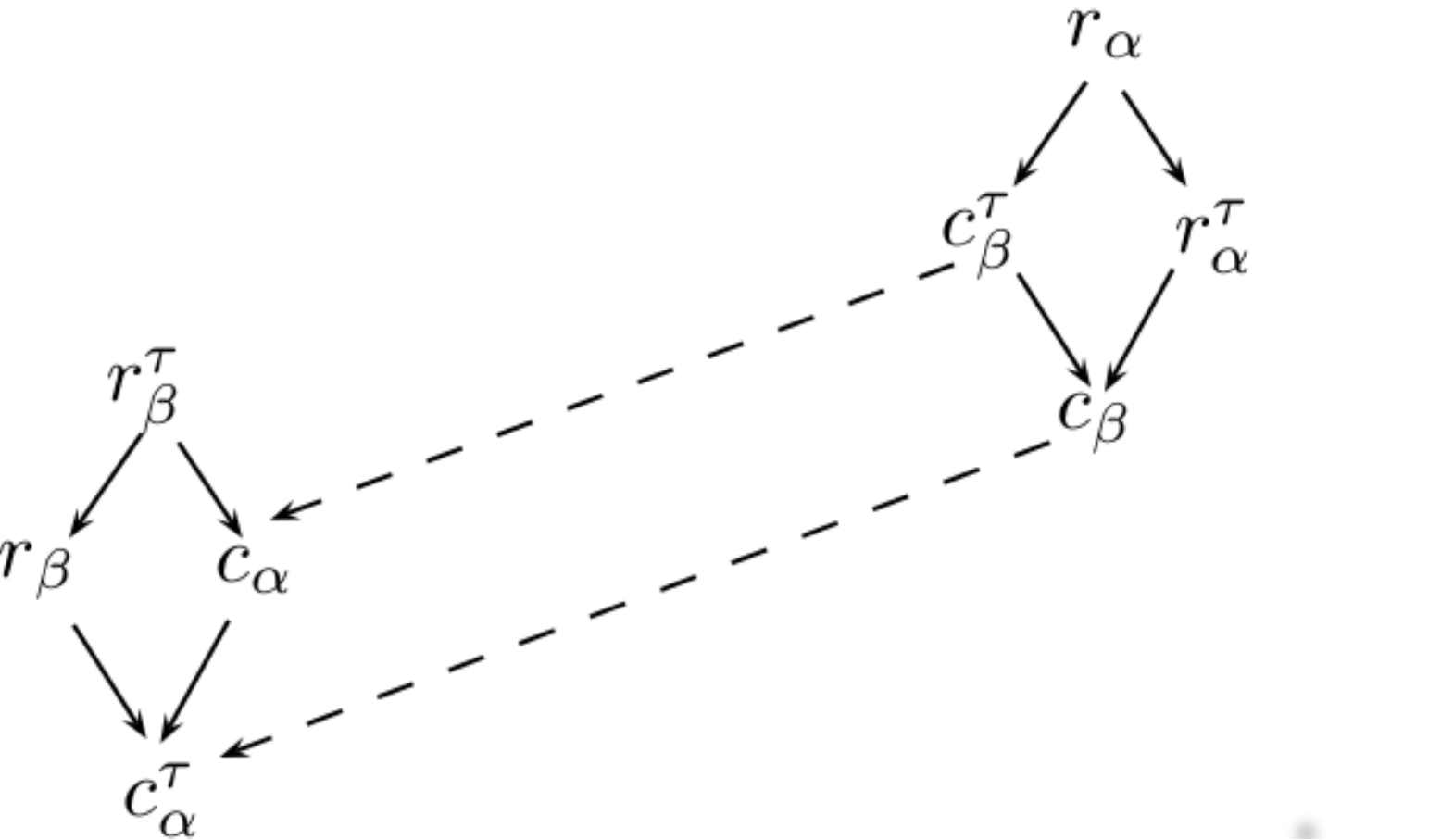}}
}
\end{center}\caption{All fixed relations between the quantities~$r_\alpha,\ r_\beta,\ c_\alpha,\ c_\beta,\ r_\alpha^\tau,\ r_\beta^\tau,\ c_\alpha^\tau,\ c_\beta^\tau$ in the $2\times n$ case assuming always that~$\alpha>\beta$. The additional dashed lines complete the picture for the case~$n=3$ where we assume in addition that~$x>u$.}\label{diamonds}
\end{figure}

Recall the quantities~$r_\alpha,\ r_\beta$ etc.~from proposition~\ref{titrate} and consider the matrix~(\ref{matricks}) for the special case where~$m=2$ (and~$n$ is any integer). We let~$\alpha,\beta$ represent any two of the~$p_{ij}$ which are in different rows and in different columns from one another. The assumption that~$\alpha>\beta$ implies the relations in figure~\ref{diamonds}, where a solid downward arrow from~$k$ to~$l$ indicates that~$k>l$.

In our case~$r_\beta=\beta+u+v,\ \ c_\beta=\beta+x,\ \ r_\alpha^\tau=\beta+x+y,\ \ c_\alpha^\tau=\beta+u$. Since the hypotheses of the theorem include the requirement that~$u<x$, we must have~$c_\alpha^\tau<c_\beta$ and it then follows from the solid lines in the diagram that~$c_\alpha^\tau$ must be the minimum of the four quantities~$r_\alpha^\tau,c_\alpha^\tau,r_\beta,c_\beta$ in~(\ref{titanic}). We have drawn in the dashed lines to reflect this additional information for the case~$n=3$ (only).
In addition it is apparent from these formulae that 
$$\left(v>y\right)\Longleftrightarrow\left(r_\beta+c_\beta>r_\alpha^\tau+c_\alpha^\tau\right),$$
so by assuming $v>y$ we shall have fulfilled the hypotheses of the second part of proposition~\ref{titrate}.
Hence the condition given for type~A is sufficient. That~$v>y$ is also a necessary condition will follow from the results on type~B which we are about to prove. We remark that since type~A and type~B are mutually exclusive, it also follows that $v<y$ is a necessary condition for type~B.

Now there are 12 possible orderings for the four values $u,v,x,y$, remembering that $u<x$ must always hold. In reverse lexicographic order these are:

\begin{enumerate}[(I)]
\item{$y>x>v>u$}
\item{$y>x>u>v$}
\item{$y>v>x>u$}
\item{$x>y>v>u$}
\item{$x>y>u>v$}
\item{$x>v>y>u$}
\item{$x>v>u>y$}
\item{$x>u>y>v$}
\item{$x>u>v>y$}
\item{$v>y>x>u$}
\item{$v>x>y>u$}
\item{$v>x>u>y$}
\end{enumerate}

We should point out here that any of the above inequalities may be relaxed to~$\geq$: of course if~$\alpha$ or~$\beta$ (or any other variable) should happen to be in-between two values of~$u,v,x,y$ which are equal then they shall also be forced to be equal to their neighbours - but this does not affect any of the arguments below. However because of this we shall need to prove strict violations of inequalities (that is, if we are trying to prove a contradiction to some expression~$f>g$ then we shall need to provide an example where actually~$f\lneq g$).


One sees straight away that cases~VI, VII, IX, X, XI and~XII are all of type~A, since~$v>y$. We now proceed to show that case~II is the only type~B and that the remaining cases (I, III, IV, V and~VIII) are neither type~A nor type~B.
We first claim that
\begin{equation}\label{finally}
r_\beta c_\beta < r_\alpha^\tau c_\alpha^\tau
\end{equation}
is a necessary and sufficient condition for type~B. Consider once again the fundamental expression~(\ref{wing}).
Recalling that~$c_\alpha^\tau$ is the smallest of the four terms,~(\ref{finally}) implies that we must have $r_\beta<r_\alpha^\tau$ and $c_\beta<r_\alpha^\tau$.
Hence setting~$p=c_\alpha^\tau$, $q=\min\{c_\beta,\ r_\beta\}$, 
$r=\max\{c_\beta,\ r_\beta\}$ and $s=r_\alpha^\tau$ gets us into the situation of the reverse implication of part~(vii) of lemma~\ref{lollipop}, namely we know~$qr<ps$: so it follows that 
$$\frac{\mu_H(r_\beta,r_\beta^\tau)\mu_H(c_\beta,c_\beta^\tau)}{\mu_H(r_\alpha^\tau,r_\alpha)\mu_H(c_\alpha^\tau,c_\alpha)}<1,$$
which by definition means type~B. So~(\ref{finally}) is a sufficient condition for type~B. We now show it is also necessary. Using the explicit formulae above for the row and column sums we see that~(\ref{finally}) is the same as the condition
\begin{equation}\label{wally}
v\beta+vx<y\beta+yu,
\end{equation}
and so we may write the \emph{reverse} inequality as:
\begin{equation}\label{endlich}
\frac{x+\beta}{u+\beta}>\frac{y}{v}\ .
\end{equation}
Since~$y>v$ is a necessary condition for type~B as observed above, we shall have proven the necessity of~(\ref{finally}) for type~B if we can prove the following:
\begin{claim}
If \rm(\ref{endlich}) \it holds then $y\leq v$.
\end{claim}

For suppose to the contrary that we have some matrix $M\in\fdM_6$ satisfying both~(\ref{endlich}) and $y>v$, so in particular we must be in one of the situations~I, II, III, IV, V or~VIII above. In probability distributions of type I, II, III and~VIII we may set $u=x$ and so $y=v$, a contradiction. In~IV and~V we may set~$y=x$ and~$u=v$ and since we can always construct an example where~$\beta>0$, we have $ux+\beta u>ux+\beta x$ which is a contradiction since $x>u$. This proves the claim.

Since (\ref{wally}) is equivalent to~(\ref{finally}), to complete the proof of the theorem for type~B it only remains to show that~(\ref{wally}) is equivalent to the condition~II, namely~$y>x>u>v$. Now~II certainly implies~(\ref{wally}), so we just need to prove that~(\ref{wally}) implies~II.
Our hypotheses include the assumption that~$x>u$ so it is enough to show that~$y>x$ and~$u>v$. 
Recall that we are still in one of the cases~I, II, III, IV, V or VIII, because $v>y$ would produce an immediate contradiction to~(\ref{wally}) since $x>u$.
Suppose that~$v\gneq u$ (ie forcing us into cases~I, III and~IV): then setting~$v=y$ we see that~(\ref{wally}) reduces to~$vx<vu$, a contradiction to~$x>u$. So~$u>v$ as required. Similarly suppose that~$x\gneq y$ (ie cases~V and~VIII): then again setting $v=y$, the inequality~(\ref{wally}) contradicts~$x>u$. So $y>x$, completing the picture that condition~II is a necessary and sufficient condition for type~B.

We now prove that the condition~$v>y$ is necessary for type~A. 
Suppose to the contrary that we have type~A but that $y>v$. By figure~\ref{diamonds} we know that $c_\beta<r_\alpha^\tau$ and by the formulae above $y>v$ implies $r_\beta<r_\alpha^\tau$, so we are again in the situation of lemma~\ref{lollipop}~(vii), with $p=c_\alpha^\tau$, $q=\min\{c_\beta,\ r_\beta\}$, 
$r=\max\{c_\beta,\ r_\beta\}$ and $s=r_\alpha^\tau$. With these definitions, type~A is synonymous with the condition
$$\frac{\mu_H(q)\mu_H(r)}{\mu_H(p)\mu_H(s)}>1,$$
and so the lemma implies that $r_\beta c_\beta>r_\alpha^\tau c_\alpha^\tau$ which we know from above is equivalent to~(\ref{endlich}). But the claim above showed that this cannot hold under the assumption that $y>v$, yielding the desired contradiction.

This completes the proof of the central assertions of the theorem. It remains to show that  if majorisation occurs for a diagonal transposition then it must be in the situation of condition~XII, and conversely that in the sub-class of type~A where~$v>x>u>y$ in fact we have majorisation.
The latter follows immediately on substituting these relations into~$M^\sigma$ and~$M^{\tau\sigma}$. 
Conversely, consider the column sums: since~$\alpha>\beta$ and~$x>u$ by hypothesis it follows that~$x+\alpha>u+\alpha>u+\beta$ and~$x+\alpha>x+\beta>u+\beta$, hence the columns of~$M^{\tau\sigma}$ must always majorise those of~$M^\sigma$. In particular this rules out `type~B majorisation'. So the only type of majorisation which is possible in this diagonal transposition setup is type~A. Suppose then that~$M^{\tau\sigma}\succ M^\sigma$. By considering the row sums this time we see that~$\alpha+u+v>\alpha+x+y$, ie~$u+v>x+y$. Since~$x>u$ we must have that~$v>x$ and~$u>y$ (to see this, consider once again the diagram~GR2 on page~\pageref{GR2}). So we may conclude that a necessary condition for type~A majorisation is that~$v>x>u>y$. So we have proven the claim about majorisation.
\end{proof}

\begin{corollary}\label{yep}
Given a probability distribution $a>b>c>d>e>f$ as above, for any ordered pair $\alpha>\beta$ chosen from $\{a,b,c,d,e,f\}$ there exist precisely $7$ diagonal entropic relations, of which exactly one is moreover a majorisation relation. Since there are $\binom{6}{2}=15$ such ordered pairs, there exist exactly $105$ diagonal entropic relations between the matrices in $\Rtt$ arising solely from transpositions. Furthermore 90 of these CANNOT be derived by majorisation considerations.
\end{corollary}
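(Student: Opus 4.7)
The plan is to apply Theorem~\ref{cripes} directly and then count. Fix any ordered pair $\alpha>\beta$ chosen from $\{a,b,c,d,e,f\}$; the diagonal transposition $\tau=(\alpha,\beta)$ acts on any matrix of the form $M^\sigma=\left(\begin{array}{ccc}\alpha & x & y\\ u & \beta & v\end{array}\right)$, where $\{u,v,x,y\}$ is some assignment of the remaining four letters. Because we are free to swap the two matrix classes being compared, the theorem's WLOG assumption $x>u$ is costless and cuts the $4!=24$ orderings of $\{u,v,x,y\}$ in half, leaving the twelve cases labelled I--XII in the proof of Theorem~\ref{cripes}. So the entire problem reduces to checking which of these twelve cases produce an entropic relation, and which of those is a majorisation relation.

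For the type~A count, Theorem~\ref{cripes} says the criterion is $v>y$, and from the list I--XII one reads off that this holds in cases VI, VII, IX, X, XI and~XII — that is, in exactly six cases. For type~B the criterion is $y>x>u>v$, and this is satisfied uniquely in case~II. Since types~A and~B are mutually exclusive, this gives exactly $6+1=7$ diagonal entropic relations per ordered pair $(\alpha,\beta)$. The majorisation sub-condition is $v>x>u>y$, which is precisely case~XII; and the theorem explicitly rules out type~B majorisation for diagonal transpositions. Hence exactly one of the seven entropic relations per pair is a majorisation relation.

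Multiplying by the $\binom{6}{2}=15$ choices of ordered pair $(\alpha,\beta)$ yields the claimed $15\times 7=105$ entropic relations, of which $15$ are majorisation and the remaining $90$ are strictly entropic (i.e.\ not derivable from majorisation). The one step that requires a moment's care — and the one I regard as the main potential pitfall — is checking that distinct ordered pairs and distinct orderings give genuinely distinct relations between matrix classes in $\Rtt$, so that no double-counting occurs. This is, however, transparent: the two matrices participating in a diagonal-transposition relation differ in exactly the two positions occupied by $\alpha$ and $\beta$, so the unordered pair $\{\alpha,\beta\}$ and the surrounding configuration $(u,v,x,y)$ are recoverable from the pair of matrices themselves, and the $K$-action we have quotiented out only permutes rows and columns and so preserves the multiset $\{\alpha,\beta\}$ of swapped values.
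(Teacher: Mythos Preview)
Your argument is correct and mirrors the paper's own proof almost exactly: both count the twelve configurations of $(u,v,x,y)$ with $x>u$, find six type~A cases ($v>y$), one type~B case ($y>x>u>v$), and one majorisation case ($v>x>u>y$), then multiply by $\binom{6}{2}=15$. Your additional paragraph verifying that no double-counting occurs is not in the paper's proof of the corollary (it relies on the earlier discussion around $|\mathcal{T}|=360$), but it is a sensible check and does not alter the approach.
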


\begin{proof}
Given any one of the 15 possible pairs $(\alpha,\beta)$ with $\alpha>\beta$: exactly one of the $\frac{4!}{2}=12$ configurations of the remaining letters $u,v,x,y$ (remembering always that $u<x$) satisfies $v<u<x<y$, and 6 satisfy $v>y$, of which one further satisfies $v>x>u>y$. This means of course that 5 of the remaining configurations satisfy neither type~A nor type~B.
\end{proof}

We now deal with the situation when there is more than one transposition.

\subsection{The case of more than one transposition: the `sporadic 5'}

\subsubsection{Definition of the `sporadic 5' and proof of two of the relations}

Recall that a relation $x>y$ in a partial order is called a {\it covering relation\rm} if no $z\neq x,y$ may be found such that $x>z>y$.

\begin{theorem}\label{proveit}
There are at least~2 and at most~5 covering relations between equivalence classes in~$\Rtt$ which arise exclusively from products of two or more transpositions. They are:
\begin{eqnarray*}
\mathbf{15}\rhd\mathbf{10}:\ \ \left( \begin{array}{ccc}a & b & e\\d & c & f\end{array}\right) & \rhd &\left( \begin{array}{ccc}a & b & d\\e & f & c\end{array}\right)\ \ \hbox{(proven below)}\\
\mathbf{26}\rhd\mathbf{10}:\ \ \left( \begin{array}{ccc}a & c & d\\b & f & e\end{array}\right) & \rhd &\left( \begin{array}{ccc}a & b & d\\e & f & c\end{array}\right)\ \ \hbox{(proven below)}\\
\mathbf{37}\rhd\mathbf{11}:\ \ \left( \begin{array}{ccc}a & c & f\\b & d & e\end{array}\right) & \rhd &\left( \begin{array}{ccc}a & b & d\\f & c & e\end{array}\right)\ \ \hbox{(conjectured below)}\\
\mathbf{43}\rhd\mathbf{11}:\ \ \left( \begin{array}{ccc}a & d & e\\b & c & f\end{array}\right) & \rhd &\left( \begin{array}{ccc}a & b & d\\f & c & e\end{array}\right)\ \ \hbox{(conjectured below)}\\
\mathbf{49}\rhd\mathbf{11}:\ \ \left( \begin{array}{ccc}a & d & f\\b & c & e\end{array}\right) & \rhd &\left( \begin{array}{ccc}a & b & d\\f & c & e\end{array}\right)\ \ \hbox{(conjectured below)}.
\end{eqnarray*}
\end{theorem}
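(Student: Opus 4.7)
The plan is to establish each of the two claimed relations by unpacking the CMI difference $\Delta(P) = I(P^{\sigma'}) - I(P^{\sigma})$ in the Lagrangian-mean framework of the preceding sections, and then controlling the resulting expression via Lemma~\ref{lollipop}. For the relation $\mathbf{15}\rhd\mathbf{10}$, the permutation $\sigma^{-1}\sigma'$ factors as the diagonal transposition $(d,e)$ followed by the horizontal transposition $(c,f)$ in the bottom row. Writing $t = d-e$ and $s = c-f$, and applying the identity $H(y)-H(x) = -(y-x)(1+\log\mu_H(x,y))$ to each of the ten $H$-values appearing in $\Delta$, all constant terms cancel exactly and one obtains the closed expression
\begin{equation*}
\Delta \;=\; t\log\frac{\mu_2\mu_3}{\mu_1\mu_6} \;+\; s\log\frac{\mu_4}{\mu_7},
\end{equation*}
where $\mu_1 = \mu_H(a+b+e,\,a+b+d)$, $\mu_2 = \mu_H(c+e+f,\,c+d+f)$, $\mu_3 = \mu_H(a+e,\,a+d)$, $\mu_4 = \mu_H(b+f,\,b+c)$, $\mu_6 = \mu_H(e+f,\,d+f)$ and $\mu_7 = \mu_H(d+f,\,c+d)$. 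The companion relation $\mathbf{26}\rhd\mathbf{10}$ admits an analogous factorisation (for instance the horizontal swap $b\leftrightarrow e$ in the bottom row followed by a diagonal swap $b\leftrightarrow c$) yielding a similar two-term expression with coefficients $u = b-c$ and $w = c-e$.

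The second summand $s\log(\mu_4/\mu_7)$ is strictly positive: by Lemma~\ref{lollipop}(i), $\mu_H(x, x+s)$ is strictly increasing in $x$, and since $b+f > d+f$ we conclude $\mu_4 > \mu_7$. This term is precisely the contribution of a single horizontal transposition, which sits inside the majorisation framework. The first summand, however, is the contribution of the diagonal swap $(d,e)$, which fails both conditions of Theorem~\ref{cripes} (for the entries of the intermediate matrix neither $v>y$ nor $y>x>u>v$ holds) and can in fact take either sign: numerical testing confirms that the sign of $\mu_2\mu_3 - \mu_1\mu_6$ varies with $P$. The task is therefore to show that whenever $\mu_2\mu_3 < \mu_1\mu_6$ the resulting deficit is dominated by the surplus from the second summand. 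The main tool will be Lemma~\ref{lollipop}(vii), which translates an inequality on products of $\mu_H$-values into an elementary polynomial inequality $qr > ps$ on four ordered positive sums; combined with the shift-monotonicity statements (v) and (vi), this reduces the whole question to an inequality in the elementary sums of $a,b,c,d,e,f$ that can in principle be checked directly.

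The central difficulty is exactly what makes these relations sporadic in the sense of the theorem: there is no intermediate matrix class $\widehat{M}$ strictly between $\widehat{\mathbf{15}}$ and $\widehat{\mathbf{10}}$ (respectively between $\widehat{\mathbf{26}}$ and $\widehat{\mathbf{10}}$) satisfying the $\rhd$-relation uniformly over $\fdM_6$, so the decomposition into a diagonal step followed by a horizontal step can only be performed at the analytical level; the individual pieces need not be of a definite sign, and only their sum is controlled. In the worst case one will be forced to split $\fdM_6$ into sub-regions defined by the relative sizes of certain partial sums (for example whether $a$ exceeds $c+f$, or equivalently whether $a+e$ exceeds $c+e+f$) and to verify the bound on each sub-region separately. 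It is plausibly this forced case analysis, becoming more intricate for the relations targeting $\mathbf{11}$, that obstructs a uniform proof of the three conjectural sporadics $\mathbf{37}\rhd\mathbf{11}$, $\mathbf{43}\rhd\mathbf{11}$ and $\mathbf{49}\rhd\mathbf{11}$.
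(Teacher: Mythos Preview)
Your decomposition is the right starting point and matches the paper's: split the CMI difference into a positive majorisation piece (your $s\log(\mu_4/\mu_7)$, corresponding to the horizontal swap through the intermediate class $\mathbf 9$) and a diagonal piece of indeterminate sign. But from there your plan has a genuine gap.

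First, Lemma~\ref{lollipop}(vii) runs the wrong way for what you need. It says that \emph{if} the $\mu_H$-ratio exceeds~$1$ \emph{then} $qr>ps$; you want the converse implication (a polynomial inequality forcing the $\mu_H$-ratio to be large), which (vii) does not give. So your proposed reduction ``to an inequality in the elementary sums'' via (vii) cannot close the argument. Second, you never exploit the crucial arithmetic fact $s=c-f>d-e=t$ (and for $\mathbf{26}\rhd\mathbf{10}$, $a-d>b-c$). This inequality is what lets the paper pull out a common factor and replace the two-term sum by a single logarithm of a combined ratio; without it you are stuck trying to bound two terms of opposite sign separately, which is exactly why you drift toward region-by-region case analysis.

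What the paper actually does, after combining into a single logarithm, is entirely different from (vii): it proves that $x\mapsto\mu_H(x,x+t)$ is strictly \emph{log-concave} (via $\partial^2_x\log\mu_H=-1/(tx+x^2)$) and then invokes the auxiliary Lemma~\ref{moa} --- that for any strictly log-concave $\phi$ and any majorisation $\vec v\succ\vec w$ one has $\prod\phi(v_i)<\prod\phi(w_j)$. The combined ratio has numerator and denominator arguments that (after one monotonicity adjustment) form a genuine majorisation pair in $\RR^3$, so Lemma~\ref{moa} would finish the job if all the $\mu_H^{t}$ shared the same width $t$. They do not, so the final step is a continuity/substitution argument using parts (ii), (iv), (v), (vi) of Lemma~\ref{lollipop} and the internality of the identric mean to replace the mismatched $\mu_H^{(c-f)}$ factors by $\mu_H^{(d-e)}$ factors at a shifted argument $X$, chosen so that the majorisation relation survives. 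No case split on $\fdM_6$ is needed. The second relation $\mathbf{15}\rhd\mathbf{10}$ then follows by transporting the whole argument under the involution~$\xi_\omega$.
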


We shall require an $n$-dimensional analogue of lemma~5 of~\cite{me}.

\begin{lemma}\label{moa}
Let $\vec{v}=(v_i),\ \vec{w}=(w_i)$ be two vectors in $\RR^n$ with non-negative entries and suppose that $\vec{v}\succ\vec{w}$. Let $\phi$ be any strictly log-concave function defined on $\RR^+$. 
Then
$$\prod_{i=1}^n\phi(v_i) < \prod_{j=1}^n\phi(w_j).$$
\end{lemma}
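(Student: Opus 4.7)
The plan is to reduce the multiplicative inequality to an additive one by taking logarithms, and then to invoke a standard Schur-concavity argument.

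First I would take the logarithm of both sides. Since $\phi$ is positive on $\RR^+$, the desired inequality $\prod \phi(v_i) < \prod \phi(w_j)$ is equivalent to
\[
\sum_{i=1}^n g(v_i) \;<\; \sum_{j=1}^n g(w_j),
\]
where $g := \log\phi$. The hypothesis that $\phi$ is strictly log-concave is exactly the statement that $g$ is strictly concave on $\RR^+$. So the lemma is really the assertion that $\vec{x}\mapsto\sum g(x_i)$ is \emph{strictly} Schur-concave whenever $g$ is strictly concave.

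Next I would invoke the standard Schur-concavity criterion (see \cite[\S3A]{marshall} or \cite[\S II.3]{bhatia}, which are already cited in the paper): any symmetric function of the form $F(\vec{x})=\sum g(x_i)$ with $g$ concave is Schur-concave, and strictness of the concavity of $g$ lifts to strict Schur-concavity on pairs $(\vec{v},\vec{w})$ which are not permutations of one another. Equivalently, one may appeal to the Hardy--Littlewood--P\'olya theorem: $\vec{v}\succ\vec{w}$ means $\vec{w}$ lies in the convex hull of the coordinate permutations of $\vec{v}$, so a single application of Jensen's inequality to the concave function $g$ (coordinate by coordinate, then summed) yields $\sum g(v_i)\le\sum g(w_j)$.

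To upgrade to the strict inequality claimed in the lemma, I would note that the hypothesis $\vec{v}\succ\vec{w}$ together with the strict inequality in the conclusion forces the tacit assumption that $\vec{v}$ is not merely a permutation of $\vec{w}$ (otherwise both sides of the product inequality coincide). Under that assumption, at least one of the Jensen steps is applied to a genuinely non-degenerate convex combination of distinct points, and strict concavity of $g$ makes that step strict. Exponentiating then gives the desired $\prod\phi(v_i)<\prod\phi(w_j)$.

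There is no real obstacle here: the only subtlety is the bookkeeping to ensure that strictness is not lost when passing from the pointwise application of Jensen to the summed statement, and the fact that the lemma implicitly excludes the trivial case where $\vec{v}$ is a permutation of $\vec{w}$. Everything else is a direct application of results from \cite{marshall} and \cite{bhatia}.
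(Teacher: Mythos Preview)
Your proposal is correct and essentially identical to the paper's proof: the paper simply cites \cite[ch.~3, E.1]{marshall} to conclude that the product $\prod_i\phi(x_i)$ is strictly Schur-concave, which is precisely the statement you obtain after taking logarithms and invoking \cite[\S3A]{marshall} for the sum $\sum_i\log\phi(x_i)$. Your version makes the log-reduction explicit, but the underlying appeal to Marshall--Olkin is the same.
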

\begin{proof}
By chapter 3, E.1 of~\cite{marshall} the product of $\phi$ on the components is strictly Schur-concave.
\end{proof}

\begin{proof}[Proof (of the theorem)]
A general rule similar to that in theorem~\ref{cripes} for cases where matrix classes are related by two transpositions seems to be very difficult to formulate. So to avoid having to do this we first of all invoke the following empirical result. We constructed a program on Matlab which easily shows by counterexample that any pairs \emph{not} related by a sequence of covering relations arising from proposition~\ref{crikey}, theorem~\ref{cripes} and/or the above list of five, will \emph{not} have any~$\rhd$ relations between them. It never seems to require more than $10^6$ randomly chosen probability vectors (just using the~\emph{rand(1,6)} function on Matlab with no modifications other than normalisation) in order to find a counterexample in any given instance - usually of course one needs far fewer than this. So it remains to prove that the two relations above indeed do hold, and we shall be done.

We remark that the first and second relations, and the third and fifth relations, are each pairs of relations which are images of one another under the automorphism~$\xi_\omega$ (see  appendix~\ref{transco}). So our proof that~$\mathbf{26}\rhd\mathbf{10}$ actually points us to a kind of `mirror image' proof of the relation~$\mathbf{15}\rhd\mathbf{10}$; and we would expect similarly for~$\mathbf{37}\rhd\mathbf{11}$ and~$\mathbf{49}\rhd\mathbf{11}$.

We first show that~$\mathbf{26}\rhd\mathbf{10}$. We have to prove that
$$H(a+e)+H(b+f)+H(c+d)+H(a+b+d)+H(c+e+f) \geq H(a+b)+H(c+f)+H(d+e)+H(a+c+d)+H(b+e+f),$$
which using the same technique as in~(\ref{slump}) we may rewrite as
\begin{equation}\label{smog}
(b-c)\log\frac{\mu_H^{(b-c)}(a+c)\ \mu_H^{(b-c)}(c+e+f)}{\mu_H^{(b-c)}(a+c+d)\ \mu_H^{(b-c)}(c+f)}+(a-d)\log\frac{\mu_H^{(a-d)}(c+d)}{\mu_H^{(a-d)}(d+e)}\geq0,\end{equation}
where we have written~$\mu_H^t(x)$ for $\mu_H(x,x+t)$. We have added in a "dummy" factor~$H(a+c)$ and then taken it out again, which has enabled us effectively to `factorise' the path from~$\mathbf{26}$ to~$\mathbf{10}$ via the matrix class~$\mathbf{8}$.
The monotonicity in~$x$ of~$\mu_H^t(x)$ for fixed~$t$ (lemma~\ref{lollipop}(i)) shows that the second term is always~$>0$ (indeed this is simply the expression which shows directly that~$\mathbf{8}\rhd\mathbf{10}$); so since~$a>b>c>d$ the left-hand side of~(\ref{smog}) is greater than or equal to
\begin{equation}\label{ta}
(b-c)\log\frac{\mu_H^{(b-c)}(a+c)\ \mu_H^{(b-c)}(c+e+f)\ \mu_H^{(a-d)}(c+d)}{\mu_H^{(b-c)}(a+c+d)\ \mu_H^{(b-c)}(c+f)\ \mu_H^{(a-d)}(d+e)},
\end{equation}
and once again by lemma~\ref{lollipop}(i) we know that $\mu_H^{(b-c)}(a+c)\geq\mu_H^{(b-c)}(a+d)$ so~(\ref{ta}) is in turn greater than or equal to the following expression:
\begin{equation}\label{simmo}
(b-c)\log\frac{\mu_H^{(b-c)}(a+d)\ \mu_H^{(b-c)}(c+e+f)\ \mu_H^{(a-d)}(c+d)}{\mu_H^{(b-c)}(a+c+d)\ \mu_H^{(b-c)}(c+f)\ \mu_H^{(a-d)}(d+e)},
\end{equation}
which has the added symmetry that the sum of the arguments of the various $\mu_H$'s in the numerator equals the sum of the arguments in the denominator. 
So we may compare these vectors of arguments and we find that
\begin{equation}\label{majo}
(a+c+d,\ c+f,\ d+e)\ \succ\ (a+d,\ c+d,\ c+e+f),
\end{equation}
since in $\RR^3$ a necessary and sufficient condition that a vector $\vec{v}$ majorise $\vec{w}$ is that $\vec{v}$ contain the overall maximum of all~6 components of $\vec{v},\vec{w}$ (in this case $a+c+d$) as well as the overall minimum (in this case either $c+f$ or $d+e$). Since $b>c$ we shall be done if we can show that the argument of the logarithm in~(\ref{simmo}) is~$\geq1$.

We now claim that
\begin{equation}\label{moo}
\frac{\mu_H^{(b-c)}(a+d)\mu_H^{(b-c)}(c+e+f)\mu_H^{(a-d)}(c+d)}{\mu_H^{(b-c)}(a+c+d)\mu_H^{(b-c)}(c+f)\mu_H^{(a-d)}(d+e)}>1.
\end{equation}

First we note that 
$$\frac{\partial^2}{\partial x^2}\left(\log(\mu_H(x,x+t))\right) = -\frac{1}{tx+x^2}$$
which proves that $\mu_H(x,x+t)$ is strictly log-concave in $x$ for fixed $t$. So if the terms $\mu_H^t(X)$ in~(\ref{moo}) all had their $t$-terms equal then~(\ref{majo}) would give us our result, by lemma~\ref{moa}. The strategy therefore is to replace the rightmost top and bottom terms in~(\ref{moo}) respectively by terms of the form $\mu_H^{(b-c)}(X+(c-e))$ and $\mu_H^{(b-c)}(X)$ whose ratio is less than or equal to $\frac{\mu_H^{(a-d)}(c+d)}{\mu_H^{(a-d)}(d+e)}$: provided that the corresponding majorisation relation still holds then we shall have finished. Note that by lemma~\ref{lollipop}(v)
$$\frac{\mu_H^{(b-c)}(c+d)}{\mu_H^{(b-c)}(d+e)}\ >\ \frac{\mu_H^{(a-d)}(c+d)}{\mu_H^{(a-d)}(d+e)},$$
while part~(vi) tells us that for any $\epsilon\in(0,1-b-d)$:
$$\frac{\mu_H^{(b-c)}(c+d)}{\mu_H^{(b-c)}(d+e)}\ >\ \frac{\mu_H^{(b-c)}(c+d+\epsilon)}{\mu_H^{(b-c)}(d+e+\epsilon)};$$
that is to say, increasing the arguments of the numerator and denominator by the same amount will \emph{decrease} the value of the expression. So we know that such an~$X$, if it exists, must be greater than~$d+e$.
However we cannot increase the arguments so as to disrupt the majorisation relation~(\ref{majo}), which means that the maximum value of the new argument in the numerator cannot be greater than~$a+c+d$, which in turn translates into the value of~$X$ being less than~$a+d+e$. (Note that the \emph{minimum} in~(\ref{majo}) will not be violated because~$c+f$ is still a component of the vector of arguments of the denominator). So again using lemma~\ref{lollipop}(vi) we see by continuity that such an~$X$ must exist provided we can prove that
$$\frac{\mu_H^{(b-c)}(a+d+e+(c-e))}{\mu_H^{(b-c)}(a+d+e)}<\frac{\mu_H^{(a-d)}(c+d)}{\mu_H^{(a-d)}(d+e)}.$$
Now the internality of the identric mean~\cite{bullen} guarantees that~$\mu_H^{(a-d)}(d+e)<\mu_H^{(b-c)}(a+d+e)$ and that~$\mu_H^{(a-d)}(c+d)<\mu_H^{(b-c)}(a+c+d)$ which together with lemma~\ref{lollipop}(i) gives us the following ordering:
$$\mu_H^{(a-d)}(d+e)<\{\mu_H^{(a-d)}(c+d),\ \mu_H^{(b-c)}(a+d+e)\}<\mu_H^{(b-c)}(a+c+d).$$
So if we can show that the sum of the central two terms exceeds that of the outer two terms then by lemma~4 of~\cite{me} we shall be done (alternatively, apply lemma~\ref{moa} to the function $\phi(x)=x$). This is equivalent to showing that
\begin{equation}\label{slopes}
\mu_H^{(a-d)}(c+d)-\mu_H^{(a-d)}(d+e)\ >\  \mu_H^{(b-c)}(a+c+d)-\mu_H^{(b-c)}(a+d+e).
\end{equation}
But the difference between the pairs of arguments on both sides is the same value $(c-e)$, so this becomes a question about the relative steepness of $\mu_H^{(b-c)}$ and $\mu_H^{(a-d)}$.
We know that $\mu_H^t(x)$ itself is strictly concave in~$x$ by lemma~\ref{lollipop}(ii), so we may define new Lagrangian means $\mathfrak{m}=\mu_{\mu_H^{(a-d)}}^{(c-e)}(d+e)$ and $\mathfrak{M}=\mu_{\mu_H^{(b-c)}}^{(c-e)}(a+d+e)$ which by the internality of the Lagrangian mean~\cite{bullen} VI.2.2 satisfy $\mathfrak{m}<\mathfrak{M}$. Denote by~$\mu_H^{t\ '}(\xi)$ the slightly more awkward expression~$\frac{\partial}{\partial x}\left(\mu_H^t(x)\right)\mid_{x=\xi}$.
Dividing~(\ref{slopes}) through by a factor of $(c-e)$ we obtain
$$\mu_H^{(a-d)\ '}(\mathfrak{m}) > \mu_H^{(b-c)\ '}(\mathfrak{M}),$$
which is what we now must prove.
But using lemma~\ref{lollipop} once again:
$$\mu_H^{(a-d)\ '}(\mathfrak{m}) > \mu_H^{(b-c)\ '}(\mathfrak{m}) > \mu_H^{(b-c)\ '}(\mathfrak{M}),$$
where the first inequality is from part~(iv) and the second from part~(ii). This completes the proof that~$\mathbf{26}\rhd\mathbf{10}$.

To prove that~$\mathbf{15}\rhd\mathbf{10}$ we need only mimic the above proof replacing each probability $a,b,c,d,e,f$ by its respective image~$f,e,d,c,b,a$ under the obvious linear extension of~$\xi_\omega$ and then reversing all the signs. With a little care, the proof goes through exactly as above; we shall just mention the key points. One word of warning: using our abbreviated notation~$\mu_H^t(x)$ for~$\mu_H(x,x+t)$ can be a little confusing because the image under~$\xi_\omega$ will be~$\mu_H^{-\xi_\omega(t)}(\xi_\omega(x+t))$.

The equivalent of~(\ref{ta}) will be:
\begin{equation}\label{tata}
(d-e)\log\frac{\mu_H^{(d-e)}(a+e)\ \mu_H^{(d-e)}(c+e+f)\ \mu_H^{(c-f)}(b+f)}{\mu_H^{(d-e)}(a+b+e)\ \mu_H^{(d-e)}(e+f)\ \mu_H^{(c-f)}(d+f)},
\end{equation}
and our corresponding move to obtain something in the form of~(\ref{simmo}), with comparable vectors of arguments on the top and the bottom, is to add~$c-d$ to the argument of~$\mu_H^{(d-e)}(e+f)$, giving us finally the following expression which we must show is always~$\geq1$:
\begin{equation}\label{somme}
\frac{\mu_H^{(d-e)}(a+e)\ \mu_H^{(d-e)}(c+e+f)\ \mu_H^{(c-f)}(b+f)}{\mu_H^{(d-e)}(a+b+e)\ \mu_H^{(d-e)}(e+f+(c-d))\ \mu_H^{(c-f)}(d+f)}.
\end{equation}
The remainder of the proof now proceeds in an identical fashion to that for~$\mathbf{26}\rhd\mathbf{10}$: we show the existence of an $X\in(d+f,\ a+d+e)$ such that
$$\frac{\mu_H^{(d-e)}(X+b-d)}{\mu_H^{(d-e)}(X)} = \frac{\mu_H^{(c-f)}(b+f)}{\mu_H^{(c-f)}(d+f)}$$
by showing using Lagrangian means, that 
$$\frac{\mu_H^{(d-e)}(a+d+e+(b-d))}{\mu_H^{(d-e)}(a+d+e)} < \frac{\mu_H^{(c-f)}(b+f)}{\mu_H^{(c-f)}(d+f)} < \frac{\mu_H^{(d-e)}(b+f)}{\mu_H^{(d-e)}(d+f)},$$
thereby squeezing the desired value between two points on the curve of the monotonically decreasing function $\frac{\mu_H^{(d-e)}(x+(b-d))}{\mu_H^{(d-e)}(x)}$.
We then use lemma~\ref{moa} to relate that to the original question. \end{proof}

\subsubsection{The three conjectural sporadics}

Unfortunately I have been unable to prove $\mathbf{37}\rhd\mathbf{11}$, $\mathbf{43}\rhd\mathbf{11}$ and $\mathbf{49}\rhd\mathbf{11}$: the structure of these three is markedly different from the ones we have just proven, and does not seem to yield to any similar techniques. So we may merely state the following conjecture:

\begin{conjecture}\label{ohno}
In the above notation,
\begin{eqnarray*}
\mathbf{37}\rhd\mathbf{11}:\ \ \left( \begin{array}{ccc}a & c & f\\b & d & e\end{array}\right) & \rhd &\left( \begin{array}{ccc}a & b & d\\f & c & e\end{array}\right)\\
\mathbf{43}\rhd\mathbf{11}:\ \ \left( \begin{array}{ccc}a & d & e\\b & c & f\end{array}\right) & \rhd &\left( \begin{array}{ccc}a & b & d\\f & c & e\end{array}\right)\\
\mathbf{49}\rhd\mathbf{11}:\ \ \left( \begin{array}{ccc}a & d & f\\b & c & e\end{array}\right) & \rhd &\left( \begin{array}{ccc}a & b & d\\f & c & e\end{array}\right).
\end{eqnarray*}
\end{conjecture}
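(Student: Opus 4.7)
The plan is to imitate the two-step factorisation strategy that succeeded for $\mathbf{26}\rhd\mathbf{10}$, first exploiting the $\xi_\omega$-symmetry noted in the proof of theorem~\ref{proveit}: since the proven pair $\mathbf{26}\rhd\mathbf{10}$ and $\mathbf{15}\rhd\mathbf{10}$ are $\xi_\omega$-images of each other, one expects $\mathbf{37}\rhd\mathbf{11}$ and $\mathbf{49}\rhd\mathbf{11}$ to be similarly paired, leaving essentially two independent conjectures (the pair $\{\mathbf{37},\mathbf{49}\}$ and $\mathbf{43}$). The proof of either member of the pair would then transfer to the other by the standard substitution $a\leftrightarrow f,\ b\leftrightarrow e,\ c\leftrightarrow d$ followed by reversing signs, exactly as the proof of $\mathbf{15}\rhd\mathbf{10}$ was derived from that of $\mathbf{26}\rhd\mathbf{10}$.

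A useful preliminary is that $\mathbf{37},\mathbf{43},\mathbf{49}$ all share the column-sum multiset $\{a+b,\ c+d,\ e+f\}$, so each conjectured inequality reduces to
\begin{equation*}
H(r_1^{\text{LHS}})+H(r_2^{\text{LHS}})\;\leq\;H(a+b+d)+H(c+e+f)+D_c ,
\end{equation*}
where $D_c=H(a+f)+H(b+c)+H(d+e)-H(a+b)-H(c+d)-H(e+f)$ is a fixed non-negative quantity (the column-sum triple on the right majorises the one on the left). The next move is to introduce one or two dummy entropy terms $H(X)$ corresponding to intermediate matrix classes for which $\rhd$-relations to both $\mathbf{11}$ and the LHS are already known, rewriting the resulting difference as a telescoping sum of Lagrangian-mean logarithms $(\alpha-\beta)\log\bigl(\prod\mu_H^t(\cdot)/\prod\mu_H^t(\cdot)\bigr)$ in the spirit of~(\ref{smog}). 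One then juggles the arguments of the various $\mu_H^t$ terms, using parts~(i), (iv), (v) and (vi) of lemma~\ref{lollipop}, until the numerator and denominator argument multisets have the same total and satisfy a majorisation relation; at which point the log-concavity of $x\mapsto\mu_H(x,x+t)$ combined with lemma~\ref{moa} closes the argument.

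The main obstacle, which is almost certainly what has stymied progress, is that no intermediate class appears to play for $\mathbf{37}\rhd\mathbf{11}$ the clean role that $\mathbf{8}$ played for $\mathbf{26}\rhd\mathbf{10}$: the candidate factorisations leave a residual $\mu_H^t$-ratio whose numerator and denominator arguments cannot be aligned into a majorisation by monotonic shifts, because the relevant step-sizes ($a-b,\ b-c,\ c-d,\ d-e,\ e-f$) are simultaneously involved and too disparate for the single-parameter squeeze of~(\ref{slopes}) to apply uniformly. If the direct telescoping fails, I would attempt a two-parameter nested-Lagrangian-mean argument, introducing two auxiliary means $\mathfrak{m}_1,\mathfrak{m}_2$ and comparing their derivatives as in the conclusion of the proof of $\mathbf{26}\rhd\mathbf{10}$; alternatively, I would exploit the complementary representation $H(r_1)+H(1-r_1)$ so that the row-sum part of the inequality becomes a statement about distance from $\tfrac{1}{2}$, which is symmetric and may interact more cleanly with $D_c$. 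As a last resort, a rigorous interval-arithmetic verification on the compact domain $\fd_6$ would at least settle each conjecture unconditionally, albeit without structural illumination.
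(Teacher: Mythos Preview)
This statement is labelled a \emph{conjecture} in the paper, not a theorem: the author explicitly writes that he has ``been unable to prove $\mathbf{37}\rhd\mathbf{11}$, $\mathbf{43}\rhd\mathbf{11}$ and $\mathbf{49}\rhd\mathbf{11}$: the structure of these three is markedly different from the ones we have just proven, and does not seem to yield to any similar techniques.'' There is therefore no proof in the paper against which to compare your proposal.

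Your structural observations are sound. The $\xi_\omega$-pairing of $\mathbf{37}\rhd\mathbf{11}$ with $\mathbf{49}\rhd\mathbf{11}$ is correct (both $\mathbf{43}$ and $\mathbf{11}$ are $\xi_\omega$-fixed, so $\mathbf{43}\rhd\mathbf{11}$ is self-dual), and the three left-hand matrices do share the column-sum multiset $\{a+b,\,c+d,\,e+f\}$, which \emph{majorises} $\mathbf{11}$'s column multiset $\{a+f,\,b+c,\,d+e\}$, giving $D_c\geq 0$ by Schur-concavity. (Your parenthetical has the majorisation direction reversed, though the conclusion $D_c\geq 0$ is right.) But what you have written is a strategy with an acknowledged gap, not a proof, and you correctly identify the obstruction: no intermediate class plays the clean role that $\mathbf{8}$ played for $\mathbf{26}\rhd\mathbf{10}$.

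One of your fallback suggestions can be ruled out immediately. You propose passing to the representation $H(r_1)+H(1-r_1)$ to exploit symmetry about $\tfrac{1}{2}$; but $H(x)+H(1-x)$ is precisely the binary entropy function $h$, and the paper records (just after the conjecture) that substituting $h$ for $H$ \emph{breaks exactly the four relations} $\mathbf{C4}=\{\mathbf{37}\rhd\mathbf{11},\,\mathbf{43}\rhd\mathbf{11},\,\mathbf{49}\rhd\mathbf{11},\,\mathbf{31}\rhd\mathbf{11}\}$ while preserving all 826 proven ones---and breaks them roughly half the time. The same is noted for $f(x)=-x^2$. So any valid argument must exploit a feature of $H$ that is \emph{not} shared by $h$ or by quadratics; in particular the row-sum contribution cannot be reduced to a statement about $|r_1-\tfrac{1}{2}|$. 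This sharply constrains the space of workable approaches and explains why the telescoping/majorisation machinery of lemma~\ref{lollipop} and lemma~\ref{moa}, which applies equally to $h$, cannot by itself close the gap.
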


As mentioned in the introduction we shall collectively refer to the above three relations together with the corollary relation~$\mathbf{31}\rhd\mathbf{11}$ as~$\mathbf{C4}$. Also recall the definition of the \emph{binary entropy function}~$h(x)=H(x)+H(1-x)$.
One fascinating result of our numerical work - which to some extent highlights the unusual nature of these four relations -  is that if we simply substitute~$h$ for~$H$ then we obtain a partial order which shares all~826 relations which we have proven for~$H$, together with seven other relations, but the~$\mathbf{C4}$ are broken as may easily be shown by example. Moreover they are broken around fifty percent of the time. So somehow the extra symmetry of the binary entropy function, as opposed to the simple entropy function, wipes out precisely these four relations. One might hope that such a schism in behaviour would point the way to a proof of the conjecture above, although I have been unable to find one: in particular because $h$ does not lend itself to analysis by the methods of this paper (for example, lemma~\ref{lollipop} (ii) fails for $h$). There are many other functions which also break the~$\mathbf{C4}$  exclusively out of the~830, including all quadratics: one way to prove the above conjecture would be to show that entropy lies on a continuous manifold of functions well within the family of functions which respect all~830 relations. However such a proof also seems very difficult because of the convoluted nature of the Lagrangian mean functions which are involved (indeed they are often only piecewise defined).

\subsubsection{Summary of the partial order structure}

So we have a total of $262=165+2+90+5$ relations which in some sense are `primitive': there are no duplicates and the list exhausts all possibilities, bearing in mind that three of these are conjectural. In fact as we mentioned in the proof just now, it is easy to check that any pair {\bf not} included in the relations obtained by viewing these~262 as a (nilpotent) adjacency matrix~$A_{\Rtt}$ and then looking at all the powers of~$A_{\Rtt}$, is not able to be a relation by constructing a few simple random samples say on Matlab. Taking the transitive reduction of this larger graph the overall number of covering edges reduces to~186, made up of~115 majorisation relations and~71 pure entropic relations. That is to say, the process of taking the transitive reduction of~$A_{\Rtt}$ factorises~50 relations from the majorisation side and~24 from the entropic side. As mentioned at the beginning of this section these primitive relations give rise to a total of between~826 and~830 relations overall.

This completes the proof of the analytic side of theorem~\ref{analalg}, once we note that the density of the partial order~$\epo$ is given by a number between $\frac{826}{1770}$ and $\frac{830}{1770}$, that is approximately~0.47 as claimed. It remains to outline the algebraic structure of~$\epo$ in the next chapter. We conclude this chapter with a curious fact about the entropic relations.

\subsubsection{An aside: strange factorisations in the no-man's land between majorisation and $\rhd$}

We remark on a phenomenon which arises in the interplay between majorisation and the relation $\rhd$ which perhaps is a clue to delineating the kind of `majorisation versus disorder' behaviour which Partovi explores in~\cite{partovi}.

Adding the `sporadic' entropic relations from theorem~\ref{proveit} to the~90 `pure entropic' relations from corollary~\ref{yep} we obtain a maximal total of 95 relations which are NOT achievable through majorisation. It turns out that the transitive reduction of the (somewhat artificial) graph on 60 nodes whose edges are these 95 relations in fact is identical to the original graph. That is to say, all 95 are covering relations when we consider only the pure entropic relations (ie no majorisation). Curiously however when the majorisation relations are added in, there are many cases where an entropic edge ceases to be a covering relation and factors through a majorisation plus an entropic, so we have the following strange situation for right coset representatives $L,M,N\in {\Rtt}$:
$$L\succ M\rhd N\Rightarrow L\rhd N{\ \rm but\ }L\not\succ N {\rm\ \ \ \ \ !!}$$
An example of this occurs if we set $L=\mathbf{31}=\left( \begin{array}{ccc}a & c & e\\b & d & f\end{array}\right)$, $M=\mathbf{43}=\left( \begin{array}{ccc}a & d & e\\b & c & f\end{array}\right)$ and $N=\mathbf{10}=\left( \begin{array}{ccc}a & b & d\\e & f & c\end{array}\right)$. Then as is easy to check using the conditions in theorem~\ref{cripes} and the discussion preceding proposition~\ref{crikey}, $L\succ M\rhd N$ (which implies $L\rhd N$ by the transitivity of $\rhd$ and proposition~\ref{majmcmaj}) but $L\not\succ N$.

Similarly a kind of `inverse' situation also occurs - albeit less frequently - namely
$$R\rhd S\succ T\Rightarrow R\rhd T{\ \rm but\ }R\not\succ T.$$
For completeness we mention an example of this too: take $R=\mathbf{5}=\left( \begin{array}{ccc}a & b & c\\f & d & e\end{array}\right)$, $S=\mathbf{11}=\left( \begin{array}{ccc}a & b & d\\f & c & e\end{array}\right)$ and $T=\mathbf{35}=\left( \begin{array}{ccc}a & c & e\\f & b & d\end{array}\right)$.

To get some insight into this we need to show to what extent the two relations $\succ$ and $\rhd$ are the same. Recall from proposition~\ref{majmcmaj} that majorisation implies $\rhd$, but not conversely: for when $M\succ N$ we know from the fact that entropy is a Schur-concave function that $H(\mathbf{r}(M))<H(\mathbf{r}(N))$ and $H(\mathbf{c}(M))<H(\mathbf{c}(N))$. Hence the terms from $N$ entirely dominate those from $M$, giving us the entropic relation $M\rhd N$. We now explore the extent to which the converse might be true.

The relation $M\rhd N$ when $M\not\succ N$ may be thought of as a tug-of-war between the entropy differential of the row vectors $\mathbf{r}(M)$ and $\mathbf{r}(N)$, and that of the column vectors $\mathbf{c}(M)$ and $\mathbf{c}(N)$. In principle it would seem that either column entropy or row entropy could win the tug-of-war - and indeed each of these situations occurs in examples. However it turns out for any fixed pair $M,N$ where $M\rhd N$ that \it a priori \rm either the column vectors always dominate, or the row vectors always dominate.

\begin{proposition}\label{scrapie}
Let $M,N\in\Mtt$. If $M\rhd N$ then \it a priori \rm  either $\mathbf{r}(M)\succ\mathbf{r}(N)$ or $\mathbf{c}(M)\succ\mathbf{c}(N)$.
\end{proposition}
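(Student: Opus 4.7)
The plan is to reduce the claim to base cases—single-transposition $\rhd$-relations and the five sporadic relations of Theorem~\ref{proveit}—and then to propagate row- or column-majorisation through the transitive closure of~$\epo$.

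For single transpositions, I would invoke Theorem~\ref{cripes}. Horizontal and vertical transpositions are already majorisation relations by Proposition~\ref{majmcmaj}, and in each case one of the two marginal vectors is actually unchanged, so row- or column-majorisation holds trivially in the required direction. For a Type~A diagonal transposition with $M=M^{\tau\sigma}$ and $N=M^\sigma$ in the notation of the theorem, the column sums of $M$ and $N$ differ in precisely two entries, $\{\alpha+u,\,x+\beta\}$ versus $\{\beta+u,\,x+\alpha\}$; using only $\alpha>\beta$ and $x>u$, the value $x+\alpha$ is the strict maximum of these four quantities while $\beta+u$ is the strict minimum, so $(\beta+u,x+\alpha)\succ(\alpha+u,x+\beta)$ in two dimensions. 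Appending the unchanged coordinate $y+v$ yields $\mathbf{c}(M)\succ\mathbf{c}(N)$, and strikingly this requires neither the Type~A hypothesis $v>y$ nor any other extra inequality. Symmetrically, the Type~B hypothesis $y>x>u>v$ makes $\alpha+x+y$ the strict maximum of the four altered row sums (since $x+y>u+v$), yielding $\mathbf{r}(M^\sigma)\succ\mathbf{r}(M^{\tau\sigma})$. For each sporadic relation I would verify the claim directly by writing the row and column sum vectors as linear expressions in $a,\ldots,f$ and checking majorisation by comparing extremes under $a\geq b\geq\ldots\geq f$; e.g., $\mathbf{15}\rhd\mathbf{10}$ is realised by $\mathbf{c}(\mathbf{15})=(a+d,b+c,e+f)\succ(a+e,b+f,c+d)=\mathbf{c}(\mathbf{10})$, as one sees by observing that $a+d$ dominates the right-hand maximum and $e+f$ is dominated by its minimum.

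The remaining step is to propagate row- or column-majorisation along chains of covers. A chain all of whose covers impose the same marginal majorisation (all row, or all column) closes at once by transitivity of~$\succ$ on the corresponding marginal vector space. The difficult case, and the main obstacle, is a mixed chain in which some covers preserve only $\mathbf{r}$-majorisation while others preserve only $\mathbf{c}$-majorisation: neither marginal relation then transfers by simple composition to the endpoints of the chain. To handle this I would appeal to the combinatorial characterisation of a priori majorisation via the graphs GR2 and GR3 furnished by Proposition~\ref{mipelice}, and perform a finite case check across the 186 cover relations of $\epo$ to confirm that every $\rhd$-pair arising from a mixed chain nevertheless admits some uniform chain, or a direct verification, establishing either $\mathbf{r}$- or $\mathbf{c}$-majorisation. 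Since the partial order has only 60 classes this enumeration is finite, but identifying a clean structural reason—rather than an ad hoc check—for why mixed chains cannot produce pairs violating the proposition is what I expect to be the crux of the argument.
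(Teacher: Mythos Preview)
Your proposal is correct and follows essentially the same route as the paper: verify the base cases (horizontal/vertical transpositions by Proposition~\ref{majmcmaj}; Type~A diagonal $\Rightarrow$ column majorisation from $\alpha>\beta,\ x>u$ alone; Type~B $\Rightarrow$ row majorisation from $x+y>u+v$; sporadics by direct inspection), then close under transitivity, with the mixed-chain case handled by a finite check. The paper organises that final check slightly more sharply than you do: since every Type~A cover and every sporadic cover imposes \emph{column} majorisation, any chain avoiding Type~B already gives $\mathbf{c}(M)\succ\mathbf{c}(N)$ by transitivity of~$\succ$, so the only genuinely mixed situation is a Type~B step adjacent to a Type~A or sporadic step---and there are only fifteen Type~B relations to match up, rather than all~186 covers. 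Apart from this economy, your argument and the paper's coincide.
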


We remark that the differential (row or column) which does \bf not \rm have a majorisation relation acts as a kind of `swing' factor: it may be positive \bf or \rm negative in many instances - obviously if it is always positive then we have $M\succ N$ - but there are also many examples where the other factor \bf ALWAYS \rm has the opposite sign, but never gets large enough to outweigh the effect of the majorisation: indeed this `other' factor majorises the other way, giving us indeed a tug-of-war. For an example of this look at any instance of type~B: we automatically have row sum majorisation in the same direction as the entropic relation (see below), and it is immediate that column sum majorisation goes in the opposite direction.
We should also note that there are~30 pairs of matrix classes $M,N$ where neither $\mathbf{r}(M)\succ\mathbf{r}(N)$ nor $\mathbf{c}(M)\succ\mathbf{c}(N)$ (nor indeed either of the converses) - that is to say, they have no \it a priori \rm relations even on the level of row or column sum vectors. By proposition~\ref{scrapie} there can be no entropic relations between such matrices. Furthermore, none of these~30 examples may be realised by a single transposition: indeed they all involve changes in all three columns and in both rows. In other words they must in general have no common coordinates between the row sum vectors, nor any between the column sum vectors. We list them for reference in appendix~\ref{stasi}.

\begin{proof}
First we note that if $M\succ N$ then the result is known by definition. Furthermore, since majorisation is transitive it follows that if we know the result to be true for an entropic relation $M\rhd N$ and if $P\succ M$ or $N\succ Q$ then we know the result would be true for $P\rhd N$ or $M\rhd Q$. So we need only focus on entropic relations which cannot be factored into any product involving a majorisation step. We consider first of all the~90 relations arising from theorem~\ref{cripes}: that is to say, those which arise from a single `diagonal' transposition $M\mapsto M^\tau$ where $\tau=(\alpha,\beta)$ and as above we represent the matrices by $M = \left( \begin{array}{ccc}\alpha & x & y\\u & \beta & v\end{array}\right)$ and $M^\tau=\left( \begin{array}{ccc}\beta & x & y\\u & \alpha & v\end{array}\right)$. Now `most' of these relations are of the form $M^\tau\rhd M$ - what we referred to as type~A in the proof of theorem~\ref{cripes} above - and we see immediately that the vector of column sums of $M^\tau$ must majorise that of $M$ by virtue of our constant assumptions that $\alpha>\beta$ and $x>u$. So we are done for all type~A entropic relations which arise from a single transposition. For type~B we note again from theorem~\ref{cripes} that a necessary and sufficient condition is $y>x>u>v$, which implies in particular that $x+y>u+v$, which together with $\alpha>\beta$ means that the vector of row sums of $M$ must majorise that of $M^\tau$. 

So it remains to show that the proposition holds for the sporadic~5 relations of theorem~\ref{proveit}, and that it holds when we compose successive entropic relations. The former is easy to show directly (in each of the five sporadics $M\rhd N$ it is the case that $\mathbf{c}(M)\succ\mathbf{c}(N)$). That the proposition holds under composition of relations is obvious (by the transitivity of majorisation) when we consider a sequence of two or more type~A relations and/or sporadic relations; or indeed if we were to consider a sequence consisting only of type~B relations. So the only issue is what happens when we compose a type~B with a sporadic or with a type~A.

The sporadic relations are easy to deal with: recall from appendix~\ref{stasi} the~15 type~B relations. Comparing this list with the list of the sporadic instances in theorem~\ref{proveit} we see that only the following sequences can occur between the two sets:
$\mathbf{15}\rhd\mathbf{10}\rhd\mathbf{60}$,  $\mathbf{15}\rhd\mathbf{10}\rhd\mathbf{24}$,  $\mathbf{26}\rhd\mathbf{10}\rhd\mathbf{60}$, and $\mathbf{26}\rhd\mathbf{10}\rhd\mathbf{24}$.
In particular there are no relations of the form type~B followed by a sporadic.
Considering each in turn we are able to show directly (using say the graph~GR2) that the column sum vectors of the left-hand sides always majorise those of the right-hand sides, hence proving the claim. Indeed the middle two relations $\mathbf{15}\rhd\mathbf{24}$ and $\mathbf{26}\rhd\mathbf{60}$ actually exhibit full majorisation.

The claim for the composition of type~B with type~A follows from a similar case-by-case analysis of the instances where they `match up' (ie where we have a type~A relation~$X\rhd Y$ followed by a type~B relation~$Y\rhd Z$, and vice-versa), using the matrix of~830 relations referred to above. We omit the details.
\end{proof}

\newpage

\section{A purely algebraic construction of the entropic partial order $\epo$}\label{algview}

So we have our partial order~$\epo$ which has been defined entirely in terms of the entropy function. In this next section we shall briefly describe a combinatorial or algebraic construction which presupposes nothing about entropy but whose derivation mimics the case-by-case constructions of proposition~\ref{crikey} and theorem~\ref{cripes}.
Unfortunately I have not been able to find a more natural expression for these relations than this: it is tantalisingly close to a closed form but it seems always to be burdened with some `exceptional' relations which must be subtracted, no matter how they are phrased. 

When we use another strictly convex or strictly concave function~$f$ instead of entropy and define a kind of~$f$-CMI by substituting~$f$ for~$H$ in the definitions, then it is these exceptions which come into play: the coefficients of the summands in~(\ref{ether}) will change depending upon the curvature properties of~$f$, yielding new partial orders. Indeed by studying the simple family of functions~$\{\ \pm x^p\ :\ p\in\RR\ \}$ we are able to construct functions which `tune into' or `tune out of' various components of the partial order~$\epo$, yielding a phenomenon akin to that of the family of Renyi entropies on vectors which approximates Shannon entropy near~1. For example, it is easy to show that the equivalent conditions to theorem~\ref{cripes} for $f(x)=-x^2$ are that type~A occurs if and only if $v>y$, and type~B occurs iff $y>v$; moreover together with the same majorisation relations as for $f=H$ these generate \emph{all} of the~1184 relations which hold for this $f$. Perhaps the most curious fact is that just like the binary entropy function~$h$ defined in the last section, the only relations which are actually \emph{broken} from~$\epo$ in going from $H$ to $f$ are those we have called~$\mathbf{C4}$. As mentioned in the introduction, this is a vast topic for further study.


We say a quick word on the process of finding this algebraic description, which to some extent ties in with the statement of theorem~\ref{analalg}. The `shape' of the group ring elements below was discovered by considering the image of the right coset space~$K\backslash G$ under some of the outer automorphisms of~$G$: namely those which send~$K$ to a parabolic subgroup. There are six parabolic subgroups which are isomorphic to~$K$: $\langle(1,2),(2,3),(4,5)\rangle$, $\langle(1,2),(2,3),(5,6)\rangle$, $\langle(1,2),(3,4),(4,5)\rangle$, $\langle(1,2),(4,5),(5,6)\rangle$,  $\langle(2,3),(3,4),(5,6)\rangle$, $\langle(2,3),(4,5),(5,6)\rangle$, and for each one there exist several outer automorphisms which map $K$ onto it. Choose any such~$J$ and a corresponding outer automorphism $\zeta$. The right coset space~$J\backslash G$ is isomorphic as a~$G$-set to~$K\backslash G$. The image under~$\zeta$ of each matrix class forms a kind of pyramid, with the row- and column-swap equivalences being transformed into equivalences between the positions of a singleton, a pair and a triple of probabilities. Relations between these pyramids turn out to be much easier to visualise than those between matrices, and the (almost-) cyclic structure of our group ring element~$\eta_{\tau,{\bf cyc}}$ below was much more apparent in that form.

\subsection{The abstract combinatorial construction}

Let~$G=\Ssix$ be the symmetric group on the set of six elements~$\{1,2,3,4,5,6\}$. If~$\sigma\in\Ssix$ acts by sending~$i$ to~$\sigma(i)$ then one way of representing~$\sigma$ is to write it as the ordered~$6$-tuple $[\sigma(1),\sigma(2),\sigma(3),\sigma(4),\sigma(5),\sigma(6)].$ 
On the other hand we shall also represent elements of~$G$ in standard cycle notation: as in the rest of the paper, elements are understood to act on the \bf left \rm. That is to say for example that the product~$(1,2)(2,3)$ is equal to~$(1,2,3)$ rather than to~$(1,3,2)$. Define~$K$ to be the subgroup of~$\Ssix$ generated by the elements~$(1,4)(2,6)(3,5)$ and~$(1,6,2,4,3,5)$. Then~$K$ is isomorphic to the dihedral group of order~12.
The reason for choosing this particular subgroup is that when the vectors are arranged in the~$2\times3$-matrix form, left multiplication by this subgroup gives exactly the row- and column-swap operations under which~CMI is invariant: this is clearer if we choose the more obvious generators~$(1,2)(4,5),\ (1,3)(4,6),\ (1,4)(2,5)(3,6)$.

The right coset space~$K\backslash G$ contains~60 elements and may be made into a right module for the action of the group ring~$\ZZ[G]$ by taking the free abelian group whose generators are the right cosets~$K\sigma$ of~$K\backslash G$. Let~$\Id$ denote the multiplicative identity element of~$\ZZ[G]$, which is identified in the usual way with~$1\cdot1_G$ where~$1_G$ is the identity element of~$G$ and~$1$ represents the integer~1. 
Let $\tau=(\alpha,\beta)$ be any transposition in~$G$, and let $\{r,s,t,u\}=\{1,2,3,4,5,6\}\setminus\{\alpha,\beta\}$ represent the four elements left after removing $\alpha$ and $\beta$. Assume that we have ordered them so that $r>s>t>u$. Let~$\psi_\tau = (r,s),\ \chi_\tau = (s,t)$ and~$\gamma_\tau = (\alpha,u,t)(\beta,s,r).$
Let~$\mu_\tau$ be~$(\alpha,\beta)(r,t)(s,u)$, the unique involution which fixes~$\gamma_\tau$ and which interchanges~$(r,s)$ with~$(t,u)$.
Finally, define~$\sigma_\tau$ to be any one of the~12 elements of~$G$ which take~$[1,2,3,4,5,6]$ into the right~$K$-coset of the permutation~$[\alpha,r,s,\beta,u,t]$ by right multiplication.

Using the same notation for group ring elements as for their counterparts in~$G$, with coefficients assumed to be~1 unless otherwise stated, let
$$\eta_{\tau,{\bf horiz}} = (\Id+\psi_\tau)(\Id+\psi_\tau^{\mu_\tau})(\Id+\chi_\tau)-(\Id+\psi_\tau\psi_\tau^{\mu_\tau})\chi_\tau,$$
which upon expansion has six terms, and let
$$\eta_{\tau,{\bf cyc}} = \sigma_\tau(\Id+\gamma_\tau+\gamma_\tau^2)(\Id+\psi_\tau)(\Id+\chi_\tau)-\sigma_\tau\gamma_\tau^2\psi_\tau\chi_\tau,$$
which has eleven terms. Finally define the group ring element
\begin{equation}\label{ether}
\eta_\tau =\left(\eta_{\tau,{\bf horiz}}+\eta_{\tau,{\bf cyc}}\right) (\tau-1),
\end{equation}
which therefore has a total of~17 terms of the form~$\mathfrak{z}(\tau-1)$ for some~$\mathfrak{z}$ representing an element $z\in G$.

\begin{definition}\label{reflepo}
Let $\tau$ run over the~15 transpositions in~$G$.
Define a binary relation $\btr$ on $K\backslash\Ssix$ by letting each summand of each $\eta_\tau$ of the form $\mathfrak{z}(\tau-1)$ represent a relation of the form
$$K\mathfrak{z} \btr K\mathfrak{z}\tau.$$
This yields $15*17=255$ binary relations.
\end{definition}

\begin{theorem}
 The transitive closure of the relations~$\btr$ just defined together with the five \emph{sporadic} relations of theorem~\ref{proveit}, is identical to~$\epo$.
\end{theorem}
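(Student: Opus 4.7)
The plan is to prove the theorem by verifying directly that the 255 algebraic relations produced by the group ring elements $\eta_\tau$ of definition~\ref{reflepo} coincide as a set with the 255 primitive single-transposition $\rhd$-relations established analytically in Proposition~\ref{crikey} (165 majorisation relations) and Corollary~\ref{yep} (90 diagonal pure entropic relations). Once this identification is made, the theorem reduces to the concluding observation of the analytic chapter: by the numerical check described there, the transitive closure of these 255 primitive relations together with the five sporadics of Theorem~\ref{proveit} is exactly $\epo$ (modulo of course the four conjectural relations $\mathbf{C4}$).

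First I would expand each $\eta_\tau$ as a sum $\sum_{i=1}^{17} \mathfrak{z}_i(\tau - 1)$, six monomials coming from $\eta_{\tau,\mathbf{horiz}}$ and eleven from $\eta_{\tau,\mathbf{cyc}}$, and record the coset-level relation $K\mathfrak{z}_i \btr K\mathfrak{z}_i\tau$ attached to each. Summed over the 15 transpositions this produces $15 \times 17 = 255$ candidate relations, matching numerically the $165 + 90 = 255$ count on the analytic side. The next task is the matching itself: for each $\tau$ and each of its 17 summands, one computes the pair of matrix classes in $\Rtt$ determined by $(K\mathfrak{z}_i, K\mathfrak{z}_i\tau)$ using the coset-representative tabulation of appendix~\ref{frog}, and identifies each pair as an instance of the horizontal, vertical or diagonal criterion of Proposition~\ref{crikey}, or of type A or type B in Theorem~\ref{cripes}. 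One then verifies that all 255 pairs are distinct and, conversely, that every primitive relation on the analytic side is realised. The purpose of the subtractive terms $-(\Id + \psi_\tau \psi_\tau^{\mu_\tau})\chi_\tau$ and $-\sigma_\tau \gamma_\tau^2 \psi_\tau \chi_\tau$ is to excise precisely the non-relations among the candidate orderings (for example in the diagonal case, those five of the twelve configurations of $u,v,x,y$ in Theorem~\ref{cripes} which fall into neither type A nor type B).

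The last step is then automatic: by the summary at the end of the previous chapter, the transitive closure of the 255 primitive single-transposition relations together with the five sporadics of Theorem~\ref{proveit} yields the full set of between 826 and 830 relations constituting $\epo$. The main obstacle is the middle step, which, though purely combinatorial, is not illuminated by the algebraic expression itself: one must track carefully how the involutions $\psi_\tau, \chi_\tau, \mu_\tau$ and the three-cycle $\gamma_\tau$ interact with the $K$-coset structure and with the fixed reference element $\sigma_\tau$, and confirm that the subtractions delete exactly the right monomials rather than genuine relations. In practice this matching is most cleanly carried out by computer enumeration, which the paper's existing computational framework already supports; the content of the theorem is then the striking fact that the enumeration terminates with perfect agreement on the two sides.
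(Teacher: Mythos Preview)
Your proposal is correct and takes essentially the same approach as the paper: a direct computational verification. The paper's own proof is a one-line instruction to generate the transitive reduction of the $255+5$ algebraic relations and observe that it coincides with the transitive reduction of~$\epo$ listed in appendix~\ref{transco}; you instead propose to match at the level of the $255$ generating relations themselves (algebraic versus analytic) and then invoke the summary of the analytic chapter. Either route is a finite check, and your version is arguably more informative since it explains \emph{why} the counts $165+90$ and $6+11$ per transposition line up, and what role the subtracted monomials play. The only caveat is that the paper's $262$ primitive relations include the two exceptional majorisation relations of Corollary~\ref{cannotprove}, but as noted there these factor through~$\rhd$ and so contribute nothing new to the transitive closure; your $255+5$ suffices.
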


\begin{proof}
Take the relations from definition~\ref{reflepo} and theorem~\ref{proveit} and generate their transitive reduction: it is identical to that of~$\epo$ as per appendix~\ref{transco}.
\end{proof}

We may define such an element~$\eta_\tau$ for any of the~15 transpositions in~$G$; or we could equally well take a starting transposition $\tau$ arbitrarily and then `navigate' between all of its conjugates by using only \emph{adjacent} transpositions $\kappa$ which \emph{share a common element with $\tau$}. That is to say,~$\kappa=(\alpha,\alpha\pm1)$ or~$\kappa=(\beta\pm1,\beta)$ with the possibilities obviously constrained by where~$\alpha,\beta$ lie in the set~$\{1,2,3,4,5,6\}$. Denoting by~$g^\kappa$ as usual conjugation of~$g\in G$ by~$\kappa$ we then define $\sigma_\kappa=\sigma_\tau\kappa$, $\psi_\kappa=\psi_\tau^\kappa$, $\chi_\kappa=\chi_\tau^\kappa$, $\gamma_\kappa=\gamma_\tau^\kappa$, $\mu_\kappa=\mu_\tau^\kappa$ and we get the same outcome for~$\eta_\tau$ as we would have done with the direct definitions above. \emph{So it is possible to generate inductively \emph{all} of the~255 relations from one starting point}. The adjacency and common element conditions for~$\kappa$ are necessary because they preserve the rigidity of the orderings~$\alpha^\kappa>\beta^\kappa$ and $r^\kappa>s^\kappa>t^\kappa>u^\kappa$.

All of this raises an intriguing question. Does~$\epo$ correspond to any of the well-known orders on quotients of the symmetric group? The na\"ive answer is no: our partial order is `complicated' in the sense that it is not properly graded: many covering relations have length~$\geq2$ rather than just~$1$ as with the inherited Bruhat orders on the parabolic quotients of the symmetric group from classical Lie algebra theory. So the answer to what~$\epo$ `is' may lie in the more general framework of \it generalised Bruhat quotients\rm~\cite{bjorner}. 

\subsection{The unique involution $\xi$ of the entropic partial order $\epo$}\label{epoi}

Having completed the proof of theorem~\ref{analalg} it remains just to make some final observations about the internal structure of~$\epo$ which arise when one considers whether its graph has any symmetry. Consider the `maximal' involution in the Bruhat order~\cite{bjorner} which is $\omega=(1,6)(2,5)(3,4)\in K$ in the usual cycle notation, and define $\xi_\omega\in\Aut(G)$ to be the unique element of the automorphism group whose action is given by conjugation by $\omega$. We prove here a structure theorem for the graph of the entropic poset $\epo$ on the elements of $K\backslash G$.

\begin{theorem}\label{karma}
$\xi_\omega$ is the unique automorphism of $G$ which respects the entropic partial order $\epo$ on $K\backslash G$.
\end{theorem}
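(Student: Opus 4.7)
The plan has two parts: first establish that $\xi_\omega$ does respect $\epo$ (existence), and then pin down the uniqueness among automorphisms of $G$ by exploiting the unique minimum of $\epo$.

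\emph{Candidates.} An automorphism $\phi\in\Aut(G)$ descends to a map on $K\backslash G$ exactly when $\phi(K)=K$. Now $|\Aut(\Ssix)|=1440$ and the $\Aut(G)$-orbit of $K$ comprises both $G$-conjugacy classes of order-$12$ dihedral subgroups of $G$: the class containing~$K$ and the class containing the parabolics~$J$, which the outer automorphism interchanges. Each class has $60$ members (from $|G|/|N_G(K)|=720/12$, since $N_G(K)=K$), so the orbit has $120$ members and the stabiliser $A(K)$ has order $12$. Because $Z(G)=\{e\}$, the twelve inner automorphisms $\{\xi_k:k\in K\}$ are all distinct elements of~$A(K)$ and therefore exhaust it. Hence the only candidates are the $\xi_k$ for $k\in K$.

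\emph{Existence.} That $\xi_\omega$ respects $\epo$ is essentially implicit throughout the paper. The proof of the sporadic $\mathbf{15}\rhd\mathbf{10}$ in theorem~\ref{proveit} was deduced from that of $\mathbf{26}\rhd\mathbf{10}$ via the substitution $a,b,c,d,e,f\mapsto f,e,d,c,b,a$, which is exactly $\xi_\omega$ acting on the probabilities; the three conjectured sporadics form another $\xi_\omega$-orbit ($\mathbf{37}\leftrightarrow\mathbf{49}$, with~$\mathbf{43}$ fixed). The single-transposition majorisation and type~A/B relations of proposition~\ref{crikey} and theorem~\ref{cripes} are characterised by order inequalities on the local entries $u,v,x,y$ (e.g.\ $v>y$, or $y>x>u>v$) which transform compatibly under label reversal; the blue arrows in figure~\ref{hexcomb} make the $\xi_\omega$-pairing of the majorisation honeycomb explicit. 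Since these generating relations together with the sporadic~$5$ produce all of $\epo$, existence follows.

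\emph{Uniqueness.} By theorem~\ref{biggun} the class~$\mathbf{48}$ of~$X$ is the unique minimum of~$\epo$, so any order-automorphism must fix it. Take the representative $\sigma_{48}=(2,4,6)(3,5)$ of~$\mathbf{48}$, so that $P^{\sigma_{48}}=X$. The induced map $\xi_k$ fixes $K\sigma_{48}$ iff $\sigma_{48}^{-1}k\sigma_{48}\in K$. Conjugating each of the twelve elements of $K$ by $\sigma_{48}^{-1}$ (equivalently, performing the label relabelling $1,\ldots,6\mapsto 1,6,5,2,3,4$) and checking cycle types against the list of elements of~$K$, one finds that only for $k=e$ and $k=\omega$ does the conjugate land back in~$K$: explicitly $\sigma_{48}^{-1}\omega\sigma_{48}=(1,4)(2,5)(3,6)=r$, the row-swap generator of~$K$. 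Hence the stabiliser of~$\mathbf{48}$ in~$A(K)$ is $\{\xi_e,\xi_\omega\}$, which together with existence gives the claim.

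The bulky step is existence, since one must verify $\xi_\omega$-symmetry of every primitive relation of $\epo$; but this is ultimately a matter of reading off the characterising order conditions under label reversal and invoking the explicit pairings of the sporadics, and so is largely already in the paper. The novel content of the theorem lies in the uniqueness, which has been reduced to the short conjugation calculation inside the order-$12$ subgroup~$K$.
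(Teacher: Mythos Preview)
Your proof is correct and, for the uniqueness half, genuinely cleaner than the paper's. Both arguments begin identically by cutting the candidates down to the twelve inner automorphisms $\{\xi_k:k\in K\}$; the paper does this via the observation that no outer automorphism can fix $K$ (the row-swap $(1,4)(2,5)(3,6)$ would have to map to a transposition, and $K$ contains none), whereas you use an orbit--stabiliser count in $\Aut(G)$. Existence is handled the same way in both: verify that the generating single-transposition and sporadic relations are stable under label reversal, which is exactly the content of proposition~\ref{respect} and the remarks in theorem~\ref{proveit}.

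The real divergence is in uniqueness. The paper feeds the $186$-edge covering graph into SAGE (confirmed by SAUCY) to obtain a graph automorphism group of order~$2$, and then uses GAP to identify which $k\in K$ realises the nontrivial one. Your argument replaces all of this machinery by a single structural observation: theorem~\ref{biggun} makes $\mathbf{48}$ the unique minimum of~$\epo$, so any poset automorphism must fix it, and the stabiliser of $K\sigma_{48}$ under the right action of $K$ is $K\cap K^{\sigma_{48}^{\pm1}}$, which a short hand check shows to be $\{e,\omega\}$. This buys you a software-free proof and makes transparent \emph{why} only $\omega$ survives. One small caveat: the direction of the conjugation (whether the fixing condition reads $\sigma_{48}^{-1}k\sigma_{48}\in K$ or $\sigma_{48}k\sigma_{48}^{-1}\in K$) depends on the left/right action convention, but since $K\cap K^\sigma$ and $K\cap K^{\sigma^{-1}}$ are conjugate (hence equinumerous) and $\omega$ is an involution, the conclusion is unaffected.
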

In other words, $\xi_\omega$ induces a graph automorphism of the directed graph on~60 nodes with~186 edges which is conjecturally the graph of covering relations of~$\epo$. Moreover if we ignore the~3 covering relations contained in the unproven relations~$\mathbf{C4}$, this involution still induces an automorphism of the graph of the remaining~183 relations. See appendix~\ref{mxs} for the details of these directed graphs.

\begin{proof}
\it `Analytical': \rm In theorems~\ref{cripes} and~\ref{proveit} we derived from first principles the set of relations which arise only from the binary relation~$\rhd$. In proposition~\ref{crikey} (and see also corollary~\ref{cannotprove}) we explored those relations which arise from majorisation and saw that they are subsumed under the first set. This gave a directed graph on 60 nodes with 262 edges, whose covering relations boil down to~186 edges on the~60 nodes:~$\epo$ is defined to be the transitive closure of these covering relations. Feeding the adjacency matrix of this graph into the program SAGE  (www.sagemath.org) gave us a graph automorphism group~$\{1,\kappa\}$ of order~2, which fixes~16 nodes and acts as an involution on the other~44, splitting them into~22 orbits of~2 matrix classes each. We should also mention that we confirmed the uniqueness of the graph automorphism result using~SAUCY (http://vlsicad.eecs.umich.edu/BK/SAUCY/).

To discover to which (if any) automorphism \emph{of the group~$G$} this graph automorphism~$\kappa$ might correspond we proceeded as follows. The normalizer~$\mathbf{N}_G(K)$ of~$K$ in~$G$ is just~$K$ itself, and no outer automorphism of~$G$ can fix~$K$: consider for example the row-swap element~$(1,4)(2,5)(3,6)$ which must map under any non-trivial outer automorphism to a single transposition~\cite[chapter~7]{rotman}. But there are no single transpositions in~$K$. So the only possible candidates to give by conjugation an (inner) automorphism of~$G$ which preserves the structure of~$K\backslash G$ are the elements of~$K$ itself. Of these only~$\omega$ respects the binary relation~$\rhd$ in every instance (we used the computer program~GAP (www.gap-system.org) to check this, using orbit sizes). So in fact~$\kappa=\xi_\omega$ as claimed. 

\it `Algebraic': \rm once we know the individual relations constructed in definition~\ref{reflepo} we are also able to verify algebraically that conjugation by~$\omega$ swaps these relations among themselves modulo equivalence by left multiplication by elements of~$K$, leaving the total structure unaltered.
\end{proof}

Finally we make a few comments on why this involution preserves the single-transpositional relations within the partial order, this time from a purely theoretical point of view. That $\xi_\omega$ respects majorisation follows from proposition~\ref{mipelice} and the observation that the action of $\xi_\omega$ on figures~\ref{GR2} and~\ref{GR3} is to reflect them in a horizontal line passing through the centre of each: hence the property of lying on a path joining two nodes is unaltered by the action of~$\xi_\omega$.
It is also possible to show directly that~$\xi_\omega$ respects~$\rhd$ relations separated by a single transposition, as follows. Given any $g\in G$, denote by $g^{\xi_\omega}$ the image $\xi_\omega(g)$ of $g$ under the inner automorphism $\xi_\omega$, or in other words $g^{\xi_\omega}=\omega g \omega^{-1}$.

\begin{proposition}\label{respect}
Suppose $\sigma\rhd\sigma\tau$ for some transposition $\tau$. Then
$$\sigma^{\xi_\omega}\rhd\sigma^{\xi_\omega}\tau^{\xi_\omega}.$$
\end{proposition}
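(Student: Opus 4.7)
The plan is to use that $\omega\in K$ so that conjugation by $\omega$ can be absorbed into the $K$-invariance of CMI, thereby reducing the claim to an assertion about the \emph{reversal} of the probability ordering. Since $\omega\in K$, the $K$-invariance of CMI yields
\[
I(P^{\sigma^{\xi_\omega}})=I(P^{\omega\sigma\omega^{-1}})=I(P^{\omega\sigma})\qquad\text{and}\qquad I(P^{(\sigma\tau)^{\xi_\omega}})=I(P^{\omega\sigma\tau})
\]
for every $P\in\fdM_6$. So the proposition is equivalent to the implication: if $\sigma\rhd\sigma\tau$ then $I(P^{\omega\sigma})\le I(P^{\omega\sigma\tau})$ for every $P\in\fdM_6$.

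A direct calculation shows that $P^{\omega\sigma}$ at position~$i$ holds the value $p_{7-\sigma(i)}=\tilde p_{\sigma(i)}$, where $\tilde p_i:=p_{7-i}$ is the reversed (now non-decreasing) probability vector. Thus $P^{\omega\sigma}$ is exactly what one would obtain by applying $\sigma$ to $\tilde p$ instead of $p$; the question is whether an inequality of the form $I(P^\sigma)\le I(P^{\sigma\tau})$ that is valid for all non-increasing probability vectors remains valid for all non-decreasing ones.

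For a single transposition $\tau$ this is checked case-by-case using the explicit characterisations already in hand. When $\tau$ is horizontal or vertical, proposition~\ref{majmcmaj} reduces $\rhd$ to majorisation, which proposition~\ref{mipelice} encodes in terms of directed paths in the graphs GR2 and GR3. The involution $p\leftrightarrow\tilde p$ acts on those graphs as the central reflection (sending each node $p_i+p_j$ to $p_{7-i}+p_{7-j}$), which inverts every arrow but preserves path-containment; hence majorisation, and so $\rhd$, transfers. When $\tau$ is diagonal, arrange $P^\sigma=\begin{pmatrix}\alpha&x&y\\u&\beta&v\end{pmatrix}$ with $\alpha>\beta$ and $x>u$ so that theorem~\ref{cripes} gives $\rhd$ as either Type~A ($v>y$) or Type~B ($y>x>u>v$). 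The substitution $p\mapsto\tilde p$ reverses every one of those inequalities, including the setup conditions themselves; but the involution $(1,5)(2,4)(3,6)\in K$ restores the canonical form by interchanging the pairs $\alpha\leftrightarrow\beta$, $x\leftrightarrow u$, $y\leftrightarrow v$, and a short check shows that Type~A and Type~B are each equivalent to themselves under this combined flip-then-rearrange operation.

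The main obstacle of the whole argument is the bookkeeping of the diagonal case: one must track how each of the six positional values transforms under the reversal followed by the $K$-rearrangement, and confirm that the six resulting inequalities line up. The calculation is routine but not entirely automatic, and the usual continuity considerations cover the boundary where two of the $p_i$ coincide.
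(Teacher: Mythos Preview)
Your proof is correct and follows essentially the same route as the paper's: use $\omega\in K$ to reduce conjugation to a one-sided multiplication, interpret this as the letter-reversal $z\mapsto\bar z$ (your $p\mapsto\tilde p$), and then verify that the single-transposition criteria survive after restoring canonical form via a $K$-move. Your explicit identification of the restoring element $(1,5)(2,4)(3,6)\in K$ is exactly what produces the paper's matrix $M'=\begin{pmatrix}\bar\beta&\bar u&\bar v\\\bar x&\bar\alpha&\bar y\end{pmatrix}$, and your Type~A/Type~B check unwinds identically. The only cosmetic difference is that the paper treats the horizontal/vertical (majorisation) case in the paragraph preceding the proposition rather than inside the proof, but the argument---central reflection of GR2 and GR3 preserving path-containment---is the same as yours.
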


\begin{proof}
The easiest way to approach this is to use again the criteria from theorem~\ref{cripes} on pairs of matrix classes. For any letter~$z$ in the set of six letters acted upon by~$G$ let us write~$\bar{z}$ for its image under~$\omega$: so for example~$\bar{a}=f$, etc. Since~$\omega\in K$ and~$\omega^{-1}=\omega$ it follows that the impact of conjugation by~$\omega$ upon a right coset~$K\sigma$ is the same as that of right multiplication by~$\omega$, which in matrix format means we simply replace~$z$ with~$\bar{z}$ everywhere.
So the image under~$\xi$ of the matrix class represented by~$M=\left( \begin{array}{ccc}\alpha & x & y\\u & \beta & v\end{array}\right)$ will be
$$\bar{M}=\left( \begin{array}{ccc}\bar{\alpha} & \bar{x} & \bar{y}\\\bar{u} & \bar{\beta} & \bar{v}\end{array}\right).$$
Now $\xi_\omega$ reverses all size relations and so~$\alpha>\beta$,~$x>u$ become~$\bar{\alpha}<\bar{\beta}$ and~$\bar{x}<\bar{u}$. 
Furthermore the transposition $\tau=(\alpha,\beta)$ becomes ${\tau^{\xi_\omega}}=(\bar{\beta},\bar{\alpha})$.
Putting the matrix~$\bar{M}$ back into the form in the hypotheses of theorem~\ref{cripes} requires that we choose as a representative of the same class~$\bar{M}$ instead:
$$M'=\left( \begin{array}{ccc}\bar{\beta} & \bar{u} & \bar{v}\\\bar{x} & \bar{\alpha} & \bar{y}\end{array}\right).$$
We need to show that $M\rhd M^\tau$ implies that~$M'\rhd {M'}^{\tau^{\xi_\omega}}$ and that $M^\tau\rhd M$ implies~${M'}^{\tau^{\xi_\omega}}\rhd M'$.
Looking again at theorem~\ref{cripes} we see that the necessary and sufficient conditions for type~A and type~B relations give
$$M^\tau\rhd M\Longleftrightarrow v>y \Longleftrightarrow \bar{y}>\bar{v} \Longleftrightarrow {M'}^{\tau^{\xi_\omega}}\rhd M',$$
and
$$M\rhd M^\tau\Longleftrightarrow y>x>u>v\Longleftrightarrow\bar{v}>\bar{u}>\bar{x}>\bar{y}\Longleftrightarrow M'\rhd {M'}^{\tau^{\xi_\omega}}$$
This completes the proof.
\end{proof}

\newpage

\section{Appendices}
\appendix
\section{The matrix class representatives in $\Rtt$}\label{frog}

We list the matrix representatives in~$\Rtt$ in lexicographic order together with the lexicographic enumeration we have used throughout the paper when referring to them, alongside in each case the element~$\sigma\in G=\Ssix$ in cycle notation which represents the appropriate permutation of the fiducial matrix~$\left( \begin{array}{ccc}a & b & c \\d & e & f\end{array}\right)$ which we have chosen to represent the identity~$()\in G$. Note that each~$\sigma$ is only chosen up to \it left \rm multiplication by an element of~$K$. Also, since we have chosen to represent the matrices with~$a$ in the top left-hand corner and with decreasing top row, the set of representative cycles displayed is effectively a copy of~$\Sfive$ modulo a subgroup of order~2.\\

\begin{tabular}{| l l l | l l l |l l l | l l l |}
\hline
 $\mathbf{1}:$ & $\left( \begin{array}{ccc}a & b & c\\d & e & f\end{array}\right)$, & $\mathbf{()}$ &
 $\mathbf{2}:$ & $\left( \begin{array}{ccc}a & b & c\\d & f & e\end{array}\right)$, & $\mathbf{(56)}$ &
 $\mathbf{3}:$ & $\left( \begin{array}{ccc}a & b & c\\e & d & f\end{array}\right)$, & $\mathbf{(45)}$ & 
 $\mathbf{4}:$ & $\left( \begin{array}{ccc}a & b & c\\e & f & d\end{array}\right)$, & $\mathbf{(465)}$ \\ \hline 
 $\mathbf{5}:$ & $\left( \begin{array}{ccc}a & b & c\\f & d & e\end{array}\right)$, & $\mathbf{(456)}$ & 
 $\mathbf{6}:$ & $\left( \begin{array}{ccc}a & b & c\\f & e & d\end{array}\right)$, & $\mathbf{(46)}$ &
 $\mathbf{7}:$ & $\left( \begin{array}{ccc}a & b & d\\c & e & f\end{array}\right)$, & $\mathbf{(34)}$ & 
 $\mathbf{8}:$ & $\left( \begin{array}{ccc}a & b & d\\c & f & e\end{array}\right)$, & $\mathbf{(34)(56)}$ \\ \hline  
 $\mathbf{9}:$ & $\left( \begin{array}{ccc}a & b & d\\e & c & f\end{array}\right)$, & $\mathbf{(354)}$ & 
 $\mathbf{10}:$ & $\left( \begin{array}{ccc}a & b & d\\e & f & c\end{array}\right)$, & $\mathbf{(3654)}$ & 
 $\mathbf{11}:$ & $\left( \begin{array}{ccc}a & b & d\\f & c & e\end{array}\right)$, & $\mathbf{(3564)}$ & 
 $\mathbf{12}:$ & $\left( \begin{array}{ccc}a & b & d\\f & e & c\end{array}\right)$, & $\mathbf{(364)}$ \\ \hline
 $\mathbf{13}:$ & $\left( \begin{array}{ccc}a & b & e\\c & d & f\end{array}\right)$, & $\mathbf{(345)}$ & 
 $\mathbf{14}:$ & $\left( \begin{array}{ccc}a & b & e\\c & f & d\end{array}\right)$, & $\mathbf{(3465)}$ & 
 $\mathbf{15}:$ & $\left( \begin{array}{ccc}a & b & e\\d & c & f\end{array}\right)$, & $\mathbf{(35)}$ & 
 $\mathbf{16}:$ & $\left( \begin{array}{ccc}a & b & e\\d & f & c\end{array}\right)$, & $\mathbf{(365)}$ \\ \hline 
 $\mathbf{17}:$ & $\left( \begin{array}{ccc}a & b & e\\f & c & d\end{array}\right)$, & $\mathbf{(35)(46)}$ & 
 $\mathbf{18}:$ & $\left( \begin{array}{ccc}a & b & e\\f & d & c\end{array}\right)$, & $\mathbf{(3645)}$ &
 $\mathbf{19}:$ & $\left( \begin{array}{ccc}a & b & f\\c & d & e\end{array}\right)$, & $\mathbf{(3456)}$ & 
 $\mathbf{20}:$ & $\left( \begin{array}{ccc}a & b & f\\c & e & d\end{array}\right)$, & $\mathbf{(346)}$ \\ \hline 
 $\mathbf{21}:$ & $\left( \begin{array}{ccc}a & b & f\\d & c & e\end{array}\right)$, & $\mathbf{(356)}$ & 
 $\mathbf{22}:$ & $\left( \begin{array}{ccc}a & b & f\\d & e & c\end{array}\right)$, & $\mathbf{(36)}$ & 
 $\mathbf{23}:$ & $\left( \begin{array}{ccc}a & b & f\\e & c & d\end{array}\right)$, & $\mathbf{(3546)}$ & 
 $\mathbf{24}:$ & $\left( \begin{array}{ccc}a & b & f\\e & d & c\end{array}\right)$, & $\mathbf{(36)(45)}$ \\ \hline 
 $\mathbf{25}:$ & $\left( \begin{array}{ccc}a & c & d\\b & e & f\end{array}\right)$, & $\mathbf{(243)}$ & 
 $\mathbf{26}:$ & $\left( \begin{array}{ccc}a & c & d\\b & f & e\end{array}\right)$, & $\mathbf{(243)(56)}$ & 
 $\mathbf{27}:$ & $\left( \begin{array}{ccc}a & c & d\\e & b & f\end{array}\right)$, & $\mathbf{(2543)}$ & 
 $\mathbf{28}:$ & $\left( \begin{array}{ccc}a & c & d\\e & f & b\end{array}\right)$, & $\mathbf{(26543)}$  \\ \hline 
 $\mathbf{29}:$ & $\left( \begin{array}{ccc}a & c & d\\f & b & e\end{array}\right)$, & $\mathbf{(25643)}$ & 
 $\mathbf{30}:$ & $\left( \begin{array}{ccc}a & c & d\\f & e &b\end{array}\right)$, & $\mathbf{(2643)}$ &
 $\mathbf{31}:$ & $\left( \begin{array}{ccc}a & c & e\\b & d & f\end{array}\right)$, & $\mathbf{(2453)}$ & 
 $\mathbf{32}:$ & $\left( \begin{array}{ccc}a & c & e\\b & f & d\end{array}\right)$, & $\mathbf{(24653)}$ \\ \hline 
 $\mathbf{33}:$ & $\left( \begin{array}{ccc}a & c & e\\d & b & f\end{array}\right)$, & $\mathbf{(253)}$ & 
 $\mathbf{34}:$ & $\left( \begin{array}{ccc}a & c & e\\d & f & b\end{array}\right)$, & $\mathbf{(2653)}$ & 
 $\mathbf{35}:$ & $\left( \begin{array}{ccc}a & c & e\\f & b & d\end{array}\right)$, & $\mathbf{(253)(46)}$ & 
 $\mathbf{36}:$ & $\left( \begin{array}{ccc}a & c & e\\f & d &b\end{array}\right)$, & $\mathbf{(26453)}$ \\ \hline
 $\mathbf{37}:$ & $\left( \begin{array}{ccc}a & c & f\\b & d & e\end{array}\right)$, & $\mathbf{(24563)}$ & 
 $\mathbf{38}:$ & $\left( \begin{array}{ccc}a & c & f\\b & e & d\end{array}\right)$, & $\mathbf{(2463)}$ & 
 $\mathbf{39}:$ & $\left( \begin{array}{ccc}a & c & f\\d & b & e\end{array}\right)$, & $\mathbf{(2563)}$ & 
 $\mathbf{40}:$ & $\left( \begin{array}{ccc}a & c & f\\d & e & b\end{array}\right)$, & $\mathbf{(263)}$  \\ \hline 
 $\mathbf{41}:$ & $\left( \begin{array}{ccc}a & c & f\\e & b & d\end{array}\right)$, & $\mathbf{(25463)}$ & 
 $\mathbf{42}:$ & $\left( \begin{array}{ccc}a & c & f\\e & d &b\end{array}\right)$, & $\mathbf{(263)(45)}$ & 
 $\mathbf{43}:$ & $\left( \begin{array}{ccc}a & d & e\\b & c & f\end{array}\right)$, & $\mathbf{(24)(35)}$ & 
 $\mathbf{44}:$ & $\left( \begin{array}{ccc}a & d & e\\b & f & c\end{array}\right)$, & $\mathbf{(24)(365)}$ \\ \hline 
 $\mathbf{45}:$ & $\left( \begin{array}{ccc}a & d & e\\c & b & f\end{array}\right)$, & $\mathbf{(2534)}$ & 
 $\mathbf{46}:$ & $\left( \begin{array}{ccc}a & d & e\\c & f & b\end{array}\right)$, & $\mathbf{(26534)}$ & 
 $\mathbf{47}:$ & $\left( \begin{array}{ccc}a & d & e\\f & b & c\end{array}\right)$, & $\mathbf{(25364)}$ & 
 $\mathbf{48}:$ & $\left( \begin{array}{ccc}a & d & e\\f & c &b\end{array}\right)$, & $\mathbf{(264)(35)}$ \\ \hline 
 $\mathbf{49}:$ & $\left( \begin{array}{ccc}a & d & f\\b & c & e\end{array}\right)$, & $\mathbf{(24)(356)}$ & 
 $\mathbf{50}:$ & $\left( \begin{array}{ccc}a & d & f\\b & e & c\end{array}\right)$, & $\mathbf{(24)(36)}$ & 
 $\mathbf{51}:$ & $\left( \begin{array}{ccc}a & d & f\\c & b & e\end{array}\right)$, & $\mathbf{(25634)}$ & 
 $\mathbf{52}:$ & $\left( \begin{array}{ccc}a & d & f\\c & e & b\end{array}\right)$, & $\mathbf{(2634)}$  \\ \hline 
 $\mathbf{53}:$ & $\left( \begin{array}{ccc}a & d & f\\e & b & c\end{array}\right)$, & $\mathbf{(254)(36)}$ & 
 $\mathbf{54}:$ & $\left( \begin{array}{ccc}a & d & f\\e & c &b\end{array}\right)$, & $\mathbf{(26354)}$ & 
 $\mathbf{55}:$ & $\left( \begin{array}{ccc}a & e & f\\b & c & d\end{array}\right)$, & $\mathbf{(24635)}$ & 
 $\mathbf{56}:$ & $\left( \begin{array}{ccc}a & e & f\\b & d & c\end{array}\right)$, & $\mathbf{(245)(36)}$  \\ \hline 
 $\mathbf{57}:$ & $\left( \begin{array}{ccc}a & e & f\\c & b & d\end{array}\right)$, & $\mathbf{(25)(346)}$ & 
 $\mathbf{58}:$ & $\left( \begin{array}{ccc}a & e & f\\c & d & b\end{array}\right)$, & $\mathbf{(26345)}$ & 
 $\mathbf{59}:$ & $\left( \begin{array}{ccc}a & e & f\\d & b & c\end{array}\right)$, & $\mathbf{(25)(36)}$ & 
 $\mathbf{60}:$ & $\left( \begin{array}{ccc}a & e & f\\d & c &b\end{array}\right)$, & $\mathbf{(2635)}$ \\ \hline 
\end{tabular}

\newpage

\section{Matrices referred to in the text}\label{mxs}

Here we reproduce the often rather large matrices which are referred to in the text in the course of certain calculations, but which would make the main body of the paper too cumbersome if they appeared there.

\subsection{GR2, GR3 and the matrix of all majorisation relations}
First, we adopt as always the lexicographic ordering of the elements of GR2 (ie $a+b,a+c,\ldots,e+f$) and then the matrix $\mathbf{M_2}$ is as follows:
$$\mathbf{M_2}=\left(\begin{array}{ccccccccccccccc}
0&1&0&0&0&0&0&0&0&0&0&0&0&0&0\\
0&0&1&0&0&1&0&0&0&0&0&0&0&0&0\\
0&0&0&1&0&0&1&0&0&0&0&0&0&0&0\\
0&0&0&0&1&0&0&1&0&0&0&0&0&0&0\\
0&0&0&0&0&0&0&0&1&0&0&0&0&0&0\\
0&0&0&0&0&0&1&0&0&0&0&0&0&0&0\\
0&0&0&0&0&0&0&1&0&1&0&0&0&0&0\\
0&0&0&0&0&0&0&0&1&0&1&0&0&0&0\\
0&0&0&0&0&0&0&0&0&0&0&1&0&0&0\\
0&0&0&0&0&0&0&0&0&0&1&0&0&0&0\\
0&0&0&0&0&0&0&0&0&0&0&1&1&0&0\\
0&0&0&0&0&0&0&0&0&0&0&0&0&1&0\\
0&0&0&0&0&0&0&0&0&0&0&0&0&1&0\\
0&0&0&0&0&0&0&0&0&0&0&0&0&0&1\\
0&0&0&0&0&0&0&0&0&0&0&0&0&0&0\end{array}
\right).$$

Again with the lexicographic ordering of the elements of GR3 (ie $a+b+c,a+b+d,\ldots,d+e+f$) the matrix $\mathbf{M_3}$ is as follows:
$$\mathbf{M_3}=\left(\begin{array}{cccccccccccccccccccc}
0&1&0&0&0&0&0&0&0&0&0&0&0&0&0&0&0&0&0&0\\
0&0&1&0&1&0&0&0&0&0&0&0&0&0&0&0&0&0&0&0\\
0&0&0&1&0&1&0&0&0&0&0&0&0&0&0&0&0&0&0&0\\
0&0&0&0&0&0&1&0&0&0&0&0&0&0&0&0&0&0&0&0\\
0&0&0&0&0&1&0&0&0&0&1&0&0&0&0&0&0&0&0&0\\
0&0&0&0&0&0&1&1&0&0&0&1&0&0&0&0&0&0&0&0\\
0&0&0&0&0&0&0&0&1&0&0&0&1&0&0&0&0&0&0&0\\
0&0&0&0&0&0&0&0&1&0&0&0&0&1&0&0&0&0&0&0\\
0&0&0&0&0&0&0&0&0&1&0&0&0&0&1&0&0&0&0&0\\
0&0&0&0&0&0&0&0&0&0&0&0&0&0&0&1&0&0&0&0\\
0&0&0&0&0&0&0&0&0&0&0&1&0&0&0&0&0&0&0&0\\
0&0&0&0&0&0&0&0&0&0&0&0&1&1&0&0&0&0&0&0\\
0&0&0&0&0&0&0&0&0&0&0&0&0&0&1&0&0&0&0&0\\
0&0&0&0&0&0&0&0&0&0&0&0&0&0&1&0&1&0&0&0\\
0&0&0&0&0&0&0&0&0&0&0&0&0&0&0&1&0&1&0&0\\
0&0&0&0&0&0&0&0&0&0&0&0&0&0&0&0&0&0&1&0\\
0&0&0&0&0&0&0&0&0&0&0&0&0&0&0&0&0&1&0&0\\
0&0&0&0&0&0&0&0&0&0&0&0&0&0&0&0&0&0&1&0\\
0&0&0&0&0&0&0&0&0&0&0&0&0&0&0&0&0&0&0&1\\
0&0&0&0&0&0&0&0&0&0&0&0&0&0&0&0&0&0&0&0\end{array}
\right).$$

The $60\times60$ matrix reflecting all transpositions is now easily generated using the criteria in the text, once we form the sums of powers of these two matrices.

\subsection{Majorisation via transpositions}

The easiest way to give the structure of the~165 single-transposition majorisation relations referred to in proposition~\ref{crikey}, is to represent them as~165 ordered pairs using the numbering in appendix~\ref{frog}. From this set of pairs it is straightforward to rebuild the adjacency matrix of the partial order, namely we just put a `1' in each place whose entry coordinates are given by one of these pairs, and zeroes elsewhere.

The 165 relations are thus:

$(1,2),\ (1,3),\ (2,4),\ (3,4),\ (2,5),\ (3,5),\ (1,6),\ (4,6),\ (5,6),\ (7,8),\ (7,9),\ (4,10),\ (8,10),\ (9,10),\ (8,11),$

$(9,11),\ (6,12),\ (7,12),\ (10,12),\ (11,12),\ (8,14),\ (13,14),\ (13,15),\ (2,16),\ (14,16),\ (15,16),\ (11,17),\ (14,17),\ $

$(15,17),\ (5,18),\ (13,18),\ (16,18),\ (17,18),\ (13,19),\ (7,20),\ (19,20),\ (15,21),\ (19,21),\ (1,22),\ (20,22),\ (21,22),\ $

$(9,23),\ (20,23),\ (21,23),\ (3,24),\ (19,24),\ (22,24),\ (23,24),\ (25,26),\ (9,27),\ (25,27),\ (3,28),\ (26,28),\ (27,28),$

$(11,29),\ (26,29),\ (27,29),\ (5,30),\ (25,30),\ (28,30),\ (29,30),\ (26,32),\ (31,32),\ (15,33),\ (31,33),\ (1,34),\ (7,34),$

$(32,34),\ (33,34),\ (11,35),\ (17,35),\ (29,35),\ (32,35),\ (33,35),\ (6,36),\ (12,36),\ (18,36),\ (30,36),\ (31,36),\ (34,36),$

$(35,36),\ (31,37),\ (25,38),\ (37,38),\ (15,39),\ (33,39),\ (37,39),\ (2,40),\ (8,40),\ (16,40),\ (34,40),\ (38,40),\ (39,40),$

$(9,41),\ (27,41),\ (38,41),\ (39,41),\ (4,42),\ (10,42),\ (14,42),\ (28,42),\ (37,42),\ (40,42),\ (41,42),\ (31,43),\ (25,44),$

$(43,44),\ (13,45),\ (43,45),\ (7,46),\ (44,46),\ (45,46),\ (5,47),\ (11,47),\ (18,47),\ (30,47),\ (35,47),\ (44,47),\ (45,47),$

$(6,48),\ (12,48),\ (17,48),\ (29,48),\ (36,48),\ (43,48),\ (46,48),\ (47,48),\ (31,49),\ (26,50),\ (32,50),\ (49,50),\ (13,51),$

$(49,51),\ (8,52),\ (14,52),\ (50,52),\ (51,52),\ (3,53),\ (9,53),\ (28,53),\ (34,53),\ (50,53),\ (51,53),\ (4,54),\ (10,54),$

$(16,54),\ (27,54),\ (49,54),\ (52,54),\ (53,54),\ (25,55),\ (26,56),\ (55,56),\ (7,57),\ (55,57),\ (8,58),\ (56,58),\ (57,58),$

$(1,59),\ (56,59),\ (57,59),\ (2,60),\ (55,60),\ (58,60),\ (59,60).$

It is easy to verify that the following 30 pairs are not covering relations even just within the pure majorisation framework (eg observe that $(1,6)$ factorises into the product $(1,2)\rightarrow(2,4)\rightarrow(4,6)$):

$(1,6),\  (7,12),\ (13,18)  ,\ (19,24)  ,\ (25,30)  ,\ (11,35)  ,\  (6,36)  ,\ (31,36)  ,\ (15,39)  ,\  (2,40)  ,\  (8,40)  ,\  (9,41)  ,\  (4,42),$

$  (14,42)  ,\   (37,42)  ,\    (5,47)  ,\   (11,47)  ,\   (6,48)  ,\   (12,48)  ,\   (17,48)  ,\   (29,48)  ,\   (43,48)  ,\   (26,50)  ,\    (8,52)  ,\    (3,53)  ,\   (9,53)  ,$

$   (4,54)  ,\ (27,54)  ,\  (49,54)  ,\  (55,60).$

The following 20 additionally will disappear (ie they factorise as a sequence of other relations) once all of the entropic relations are introduced:

$   (5,18),\ (1,22)  ,\  (3,24)  ,\    (5,30)  ,\   (1,34)  ,\   (7,34)  ,\  (32,35)  ,\  (33,35)  ,\ (12,36)  ,\  (16,40)  ,\  (38,40)  ,\  (10,42)  ,\  (44,47)  ,$

$  (45,47)  ,\ (46,48)  ,\  (28,53)  ,\  (51,53)  ,\   (10,54)  ,\   (1,59)  ,\  (2,60).$

As mentioned in the text, the two `exceptional' relations referred to in corollary~\ref{cannotprove} do not give rise to any covering relations; hence we are left with just 115 covering relations in the entropic partial order $\epo$ which arise solely from majorisation.

\subsection{Entropic partial order}\label{stasi}

As in the previous section we shall use ordered pairs (`sparse matrix representation') to give the set of all 90 non-majorisation entropic relations arising from single transpositions as per theorem~\ref{cripes}, viz.:

$  (31,10)  ,\   (43,10)  ,\    (5,11)  ,\   (37,12)  ,\   (49,12)  ,\   (26,14)  ,\ (32,14)  ,\   (25,16)  ,\   (44,16)  ,\    (6,17)  ,\   (50,17)  ,\   (56,17) ,$

$  (12,18)  ,\   (38,18)  ,\   (55,18)  ,\   (31,19)  ,\   (37,19)  ,\   (25,20)  ,\   (38,20)  ,\   (43,21)  ,\   (49,21)  ,\   (16,22)  ,\   (26,22)  ,\   (50,22)  ,$

$   (4,23)  ,\   (44,23)  ,\   (55,23)  ,\   (10,24)  ,\   (32,24)  ,\   (56,24)  ,\   (15,27)  ,\   (33,27)  ,\   (13,28)  ,\   (45,28)  ,\    (6,29)  ,\   (21,29)  ,$

$  (39,29)  ,\   (12,30)  ,\   (19,30)  ,\   (51,30)  ,\ (46,34)  ,\    (5,35)  ,\   (23,35)  ,\   (41,35)  ,\   (52,35)  ,\   (58,35)  ,\ (20,36)  ,\  (57,36)  ,$

$  (45,39)  ,\   (51,39)  ,\   (22,40)  ,\   (52,40)  ,\    (3,41)  ,\   (46,41)  ,\   (57,41)  ,\   (24,42)  ,\   (58,42)  ,\    (1,46)  ,\   (24,47)  ,\   (40,47)  ,$

$  (53,47)  ,\   (60,47)  ,\   (22,48)  ,\   (42,48)  ,\   (54,48)  ,\   (59,48)  ,\   (38,50)  ,\   (44,50)  ,\    (2,52)  ,\   (20,52)  ,\ (46,52)  ,\   (41,53)  ,$

$  (59,53)  ,\   (23,54)  ,\   (60,54)  ,\   (31,55)  ,\   (49,55)  ,\   (37,56)  ,\   (43,56)  ,\   (13,57)  ,\   (51,57)  ,$

$   (4,58)  ,\   (19,58)  ,\   (45,58)  ,\   (15,59)  ,\   (28,59)  ,\ (39,59)  ,\   (10,60)  ,\   (21,60)  ,\   (33,60).$

In the proof of theorem~\ref{cripes} we split these~90 single-transposition entropic relations into two subsets: type~A and type~B. There are only~15 type~B relations (the remainder above are type~A), one corresponding to each of the~15 transpositions $(a,b),\ (a,c)$ etc. We list them here for reference:

$(a,b):\mathbf{28}\rhd\mathbf{59}$;  $(a,c):\mathbf{10}\rhd\mathbf{60}$;  $(a,d):\mathbf{4}\rhd\mathbf{58}$;  $(a,e):\mathbf{2}\rhd\mathbf{52}$;  $(a,f):\mathbf{1}\rhd\mathbf{46}$;  $(b,c):\mathbf{12}\rhd\mathbf{30}$;  $(b,d):\mathbf{6}\rhd\mathbf{29}$;  $(b,e):\mathbf{5}\rhd\mathbf{35}$;  $(b,f):\mathbf{3}\rhd\mathbf{41}$;  $(c,d):\mathbf{5}\rhd\mathbf{11}$;  $(c,e):\mathbf{6}\rhd\mathbf{17}$;  $(c,f):\mathbf{4}\rhd\mathbf{23}$;  $(d,e):\mathbf{12}\rhd\mathbf{18}$;  $(d,f):\mathbf{10}\rhd\mathbf{24}$;  $(e,f):\mathbf{16}\rhd\mathbf{22}$.

Returning to the total set of entropic relations above, we need now to add in the 5 `sporadic' multiple-transposition relations (3 of which are conjectural) from theorem~\ref{proveit}:

$  (15,10),\ (26,10) ,\ (37,11),\ (43,11),\ (49,11) .$

The transitive reduction of this total set of 95 relations is just the set again - that is to say, all 95 relations are covering relations just within the context of `purely entropic' relations. However 24 of them will factorise once we introduce the majorisation relations, as follows:

$  (31,10)  ,\   (37,12)  ,\   (49,12)  ,\   (26,14)  ,\   (25,16)  ,\   (38,18)  ,\   (55,18)  ,\   (31,19)  ,$

$   (25,20)  ,\   (26,22)  ,\   (32,24)  ,\   (15,27)  ,\ (13,28)  ,\   (19,30)  ,\   (51,30)  ,\    (5,35)  ,$

$  (20,36)  ,\   (57,36)  ,\   (22,48)  ,\   (59,48)  ,\   (31,55)  ,\   (13,57)  ,\   (15,59)  ,\   (33,60).$

Notice that all but one of these (namely $(5,35)$ ) are type A.
So we are left with 71 covering `purely entropic' relations (ie which are not ascribable to majorisation), which together with the 115 majorisation relations in the previous section, gives us our complete set of 186 covering relations for the entropic partial order $\epo$. We give this complete set in the next section.

Finally, we list the~30~pairs of matrices mentioned after proposition~\ref{scrapie} where neither row sum majorisation nor column sum majorisation obtain in either direction (so in particular no entropic relation would even be possible):

$\{14,28\},\ \{14,59\},\ \{16,27\},\ \{16,29\},\ \{17,28\},\ \{17,59\},\ \{17,60\},\ \{22,27\},\ \{22,29\},\ \{22,35\},$

$\{22,41\},\ \{22,52\},\ \{22,58\},\ \{23,34\},\ \{23,40\},\ \{23,59\},\ \{23,60\},\ \{24,29\},\ \{24,35\},\ \{24,52\},$

$\{24,58\},\ \{34,58\},\ \{35,59\},\ \{35,60\},\ \{41,59\},\ \{41,60\},\ \{42,47\},\ \{47,54\},\ \{52,59\},\ \{53,58\}.$

\subsection{The entropic partial ordering $\epo$ on $K\backslash G\cong\Rtt$}\label{transco}

Here is the final set of 186 covering relations:

$   (1,2)   ,\    (1,3)   ,\    (2,4)   ,\ (3,4)   ,\    (2,5)   ,\    (3,5)   ,\    (4,6)   ,\    (5,6)   ,\    (7,8)   ,\    (7,9)   ,\    (4,10)  ,\    (8,10)  ,\ $

$   (9,10)  ,\  (15,10)  ,\   (26,10)  ,\   (43,10)  ,\    (5,11)  ,\    (8,11)  ,\    (9,11)  ,\   (37,11)  ,\   (43,11)  ,\   (49,11)  ,\    (6,12)  ,\   (10,12)  ,\ $

$  (11,12)  ,\    (8,14)  ,\   (13,14)  ,\   (32,14)  ,\   (13,15)  ,\    (2,16)  ,\   (14,16)  ,\   (15,16)  ,\   (44,16)  ,\    (6,17)  ,\   (11,17)  ,\   (14,17)  ,\ $

$  (15,17)  ,\   (50,17)  ,\   (56,17)  ,\   (12,18)  ,\   (16,18)  ,\ (17,18)  ,\   (13,19)  ,\   (37,19)  ,\    (7,20)  ,\   (19,20)  ,\   (38,20)  ,\   (15,21)  ,\ $

$  (19,21)  ,\   (43,21)  ,\   (49,21)  ,\   (16,22)  ,\   (20,22)  ,\   (21,22)  ,\   (50,22)  ,\    (4,23)  ,\    (9,23)  ,\   (20,23)  ,\ (21,23)  ,\   (44,23)  ,\ $

$  (55,23)  ,\   (10,24)  ,\   (22,24)  ,\   (23,24)  ,\ (56,24)  ,\   (25,26)  ,\ (9,27)  ,\   (25,27)  ,\   (33,27)  ,\    (3,28)  ,\   (26,28)  ,\   (27,28)  ,\ $

$  (45,28)  ,\    (6,29)  ,\   (11,29)  ,\   (21,29)  ,\ (26,29)  ,\   (27,29)  ,\   (39,29)  ,\   (12,30)  ,\   (28,30)  ,\ (29,30)  ,\   (26,32)  ,\   (31,32)  ,\ $

$  (15,33)  ,\   (31,33)  ,\   (32,34)  ,\   (33,34)  ,\   (46,34)  ,\   (17,35)  ,\ (23,35)  ,\   (29,35)  ,\ (41,35)  ,\   (52,35)  ,\   (58,35)  ,\   (18,36)  ,\ $

$  (30,36)  ,\  (34,36)  ,\   (35,36)  ,\   (31,37)  ,\ (25,38)  ,\   (37,38)  ,\   (33,39)  ,\   (37,39)  ,\   (45,39)  ,\ (51,39)  ,\   (22,40)  ,\   (34,40)  ,\ $

$  (39,40)  ,\   (52,40)  ,\    (3,41)  ,\   (27,41)  ,\   (38,41)  ,\   (39,41)  ,\   (46,41)  ,\   (57,41)  ,\   (24,42)  ,\   (28,42)  ,\ (40,42)  ,\   (41,42)  ,\ $

$  (58,42)  ,\   (31,43)  ,\   (25,44)  ,\   (43,44)  ,\   (13,45)  ,\   (43,45)  ,\    (1,46)  ,\    (7,46)  ,\   (44,46)  ,\   (45,46)  ,\   (18,47)  ,\   (24,47)  ,\ $

$  (30,47)  ,\   (35,47)  ,\   (40,47)  ,\   (53,47)  ,\   (60,47)  ,\ (36,48)  ,\   (42,48)  ,\   (47,48)  ,\   (54,48)  ,\ (31,49)  ,\   (32,50)  ,\   (38,50)  ,\ $

$  (44,50)  ,\   (49,50)  ,\   (13,51)  ,\   (49,51)  ,\    (2,52)  ,\   (14,52)  ,\   (20,52)  ,\   (46,52)  ,\   (50,52)  ,\   (51,52)  ,\   (34,53)  ,\ (41,53)  ,\ $

$  (50,53)  ,\   (59,53)  ,\   (16,54)  ,\   (23,54)  ,\   (52,54)  ,\   (53,54)  ,\   (60,54)  ,\   (25,55)  ,\   (49,55)  ,\   (26,56)  ,\   (37,56)  ,\   (43,56)  ,\ $

$  (55,56)  ,\    (7,57)  ,\   (51,57)  ,\   (55,57)  ,\    (4,58)  ,\    (8,58)  ,\   (19,58)  ,\   (45,58)  ,\   (56,58)  ,\   (57,58)  ,\   (28,59)  ,\ (39,59)  ,\ $
 
$  (56,59)  ,\  (57,59)  ,\   (10,60)  ,\   (21,60)  ,\   (58,60)  ,\   (59,60).$

Set out as an adjacency matrix it represents a directed graph on~60 nodes with~186 edges, and as mentioned in theorem~\ref{karma} this graph has a unique automorphism of order~2 which we called $\xi_\omega$, where $\omega$ is the unique involution $(1,6)(2,5)(3,4)$ of maximal length in the subgroup $K$ of $G=\Ssix$ and $\xi_\omega$ is the inner automorphism of $G$ which is given by conjugation by $\omega$ within $G$. The orbits of $\xi_\omega$ consist of~22 pairs of nodes which are swapped by $\xi_\omega$, together with the remaining 16 nodes which are fixed by its action. We now give the orbits, using the matrix-numbering notation above:

$\{2,3\},\ \{8,9\},\ \{13,25\},\ \{14,27\},\ \{15,26\},\ \{16,28\},\ \{17,29\},\ \{18,30\},\ \{19,55\},\ \{20,57\},\ \{21,56\},\ \{22,59\},$

$\{23,58\},\ \{24,60\},\ \{32,33\},\ \{37,49\},\ \{38,51\},\ \{39,50\},\ \{40,53\},\ \{41,52\},\ \{42,54\},\ \{44,45\}, $

$\{1\}, \{4\}, \{5\}, \{6\}, \{7\}, \{10\}, \{11\}, \{12\}, \{31\}, \{34\}, \{35\}, \{36\}, \{43\}, \{46\}, \{47\}, \{48\}.$

Another way to say this is that conjugation by $\omega$ amounts to an involution $\chi_\omega$ in the symmetric group on $K\backslash G$: that is to say $\chi_\omega\in\mathbf{S}_{60}$ and in cycle notation it has the form:
\begin{eqnarray*}
\chi_\omega & = & (1)(2,3)(4)(5)(6)(7)(8,9)(10)(11)(12)(13,25)(14,27)(15,26)(16,28)\cdot\\
&&\ \ \cdot(17,29)(18,30)(19,55)(20,57)(21,56)(22,59)(23,58)(24,60)(31)(32,33)\cdot\\
&&\ \ \ \ \cdot(34)(35)(36)(37,49)(38,51)(39,50)(40,53)(41,52)(42,54)(43)(44,45)(46)(47)(48).
\end{eqnarray*}

See also figure~\ref{hexcomb}: the blue double-headed dashed lines represent the action of $\chi_\omega$ on the matrix classes of $\Rtt$.

\end{document}